\DeclareMathOperator{\B}{B}
\DeclareMathOperator{\vol}{Vol}
\DeclareMathOperator{\sign}{sign}
\DeclareMathOperator{\conv}{conv}
\def\R{\mathbb{R}}
\def\F{\mathbb{F}}
\def\Z{\mathbb{Z}}
\def\P{\mathbb{P}}
\def\T{\mathbb{T}}
\def\cC{\mathcal{C}}
\def\cG{\mathcal{G}}
\def\cM{\mathcal{M}}
\def\cP{\mathcal{P}}
\def\cU{\mathcal{U}}
\def\cS{\mathcal{S}}
\def\cX{\mathcal{X}}
\def\cY{\mathcal{Y}}
\newcommand{\E}{\mathbb{E}} 
\newcommand{\given}{\;|\;}
\newcommand{\mean}[1] {\E\left\{{#1}\right\}}
\newcommand{\meanx}[1] {\E\{{#1}\}}
\newcommand{\cmean}[2] {\E\left\{#1\given #2\right\}}
\newcommand{\cmeanx}[2] {\E\{#1\given #2\}}
\newcommand{\ind}{\boldsymbol{\mathbbm{1}}} 
\newcommand{\var}[1]{\mathrm{Var}\param{{#1}}}
\newcommand{\varx}[1]{\mathrm{Var}({#1})}
\newcommand{\iprod}[1]{\left\langle{#1}\right\rangle}
\newcommand{\set}[1]{\left\{#1\right\}}
\newcommand{\norm}[1]{\left\|#1\right\|}
\newcommand{\param}[1]{\left(#1\right)}
\newcommand{\abs}[1] {\left| {#1}\right|}
\newcommand{\floor}[1] {\left\lfloor{#1}\right\rfloor}
\newcommand{\prob}[1]{\mathbb{P}\left(#1\right)}
\newcommand{\cprob}[2]{\mathbb{P}\left(#1\given #2\right)} 
\newcommand{\eps}{\epsilon}
\newcommand{\by}{\mathbf{y}}
\newcommand{\bx}{\mathbf{x}}
\def\bth{\boldsymbol\theta}
\providecommand{\setthms}[1]{#1}
\newtheorem{lem}{Lemma}[section]
\newtheorem{thm}[lem]{Theorem}
\newtheorem{prop}[lem]{Proposition}
\newtheorem{cor}[lem]{Corollary}
\theoremstyle{definition}
\newtheorem{defn}[lem]{Definition}
\newcommand{\cech}{\v{C}ech }
\newcommand{\erdos}{Erd\H{o}s }
\newcommand{\renyi}{R\'enyi }
\newcommand{\iid}{\mathrm{i.i.d.}}
\newcommand{\ninf}{n\to\infty}
\newcommand{\pois}[1]{\mathrm{Poisson}\param{{#1}}}
\newcommand{\limninf}{\lim_{\ninf}}
\numberwithin{equation}{section}
\begin{document}

\begin{frontmatter}

\title{On the Vanishing of Homology in Random \v{C}ech Complexes}
\runtitle{Vanishing of Homology in Random \v{C}ech Complexes}

\begin{aug}
  \author{\fnms{Omer}  \snm{Bobrowski}\thanksref{m1}\ead[label=e1]{omer@math.duke.edu}} and
  \author{\fnms{Shmuel} \snm{Weinberger}\ead[label=e2]{shmuel@math.uchicago.edu}}\thanksref{m2}
  \runauthor{O. Bobrowski \& S. Weinberger}

	
  \address[m1]{Department of Mathematics, Duke University\\
          \printead{e1}}

  \address[m2]{Departments of Mathematics, University of Chicago\\
          \printead{e2}} 

\end{aug}


\begin{abstract}
We compute the homology of random \cech complexes over a homogeneous Poisson process on
the $d$-dimensional torus, 
and show that there are, coarsely, two phase transitions.  The first transition is analogous to the \erdos-\renyi phase transition, where the \cech complex becomes connected. The second transition is where all the other homology groups are computed correctly (almost simultaneously).  Our calculations also suggest a finer measurement of scales, where there is a further refinement to this picture and separation between different homology groups.
\end{abstract}

\begin{keyword}
\kwd{random topology}
\kwd{simplicial complexes}
\kwd{topological data analysis}
\kwd{homology}
\end{keyword}

\end{frontmatter}

\maketitle


\section{Introduction}


This paper is a continuation of three different stories. The first story begins with the work of \erdos and \renyi \cite{erdos_random_1959} on the random graphs $\cG(n,p)$ (with $n$ vertices, and where every edge is included independently with probability $p$), and their connectivity properties (see also \cite{bollobas_random_1998}). It continues with Penrose's work \cite{penrose_random_2003} extending this to the geometric graphs $\cG(\cP,r)$ -  graphs whose vertices are random point processes ($\cP$) in a $d$-dimensional space, and whose edges are determined by the proximity between the points (distance less than $r$). In both models, a phase transition occurs when the number of vertices $n$ goes to infinity, and the average degree is of order $\log n$ ($p=\frac{\log n}{n}$, or $r \propto (\frac{\log n}{n})^{1/d}$). In this sharp phase transition, if the degree is lower then a known threshold the graph has many connected components, but once the threshold is passed the graphs become connected.

The second story begins much later in the work of Linial and Meshulam \cite{linial_homological_2006}, and is about high dimensional extensions of the \erdos-\renyi theory, where instead of a graph one studies a random simplicial complex. See \cite{aronshtam_when_2015,aronshtam_collapsibility_2013,babson_fundamental_2011,cohen_topology_2012,costa_asphericity_2015,kahle_sharp_2014,kahle_inside_2014,kozlov_threshold_2010,linial_homological_2006,linial_phase_2014} for a variety of important results. Random simplicial complexes have various models, and much is still unknown about those, although remarkable phenomena have been found regarding their homology (a higher dimensional generalization of connectedness, see Section \ref{sec:homology}) and their fundamental groups.
A direct motivation for the work presented in this paper can be found in Kahle's work \cite{kahle_sharp_2014} describing the vanishing of homology of random flag complexes. Random flag complexes are generated from  \erdos-\renyi graphs by adding a $k$-simplex for any $(k+1)$-cliques in the graph. The main result in \cite{kahle_sharp_2014} states that the $k$-th homology group vanishes (becomes trivial) when $p = \param{\frac{(\frac{k}{2}+1)\log n}{n}}^{1/(k+1)}$, and the result of \erdos and \renyi can be viewed as the special case $k=0$. Our goal in this paper is to explore an equivalent extension in the random geometric graph context.

The third story is related to the theory of coverage processes (cf.~\cite{hall_introduction_1988}).  In particular, Flatto and Newman \cite{flatto_random_1977} have shown that on a $d$-dimensional unit volume Riemannian manifold $\cM$, generating $n$ uniformly distributed points and covering them with balls of radius $r$ will cover $\cM$ with high probability provided that  $r\propto (\frac{\log n}{n})^{1/d}$ (resembling the connectivity threshold discussed above). On first sight, this phase transition might seem unrelated to the problem at hand, but as we shall see soon, it provides significant information about the vanishing of homology in geometric complexes.

There are various ways to define geometric simplicial complexes. The one studied in this paper is the \emph{random \cech complex}. In this model we generate points randomly in a metric space (e.g.~a compact Riemannian manifold $\cM$), and form a simplicial complex by fixing a radius $r$ and asserting that $k$ points span a $(k-1)$-simplex if the $r$-balls around these points have a nonempty intersection. 
The theory of random geometric complexes (such as the \cech complex) has been growing rapidly in the past decade (cf. \cite{adler_persistent_2010,adler_crackle:_2014,bobrowski_distance_2014,bobrowski_topology_2014,kahle_random_2011,kahle_limit_2013,owada_limit_2015,yogeshwaran_topology_2012,yogeshwaran_random_2014} and the comprehensive survey in \cite{bobrowski_topology_2014-1}). In particular, the limiting behavior of the homology of these complexes has been studied, when $n\to\infty$ and $r=r(n)\to 0$.

In this paper we study the following model. Let $\T^d = \R^d/ \Z^d$ be the $d$-dimensional flat cubical torus (see Section \ref{sec:flat_torus}),
and let $\cP_n$ be a homogeneous Poisson process in $\T^d$ with intensity $n$ (i.e.~$\mean{\abs{\cP_n}} = n$).  We are interested in phase transitions related to the homology of the \cech complex $\cC(\cP_n,r)$ as $n\to\infty$ and $r=r(n) \to 0$. The limiting behavior of the complex is controlled by the term $\Lambda := \omega_d nr^d$, where $\omega_d$ is the volume of a $d$-dimensional Euclidean unit ball. This quantity can be viewed as measuring the expected degree of the underlying graph.

For this random geometric complex, we shall see that essentially there are two sharp phase transitions. The first transition is the \erdos\!\!--\renyi\!\!--Penrose connectedness type, which occurs when $\Lambda = 2^{-d}\log n$. The second transition is related to the rest of the homology groups, and occurs when $\Lambda = \log n$.
Below the second threshold, we see a lot of extra homology (cycles, or holes in various dimensions $k<d$), while above the threshold the homology of the random complex becomes identical to that of the torus (a phenomenon we refer to as the ``vanishing" of homology).

As mentioned earlier, a phenomenon related to the vanishing phase transition has also been observed in the literature on coverage processes (see \cite{hall_introduction_1988}).  Flatto and Newman \cite{flatto_random_1977} have shown that $\Lambda = \log n + (d-1)\log\log n$ is the threshold for the balls or radius $r$ around $\cP_n$ to cover a $d$-dimensional manifold
(see also Appendix \ref{sec:appendix_coverage}). Since the union of balls and the \cech complex share the same homology (see Section \ref{sec:cech}), and since coverage implies the vanishing of homology, the result in \cite{flatto_random_1977} provides an upper bound to the vanishing threshold we are interested at.

Our main results (see Section \ref{sec:main_results}) assert that when $\Lambda = (1+\eps)\log n $ the homology of $\cC(\cP_n,r)$ is identical to that of $\T^d$, while prior to this point, i.e.~when $\Lambda = (1-\eps)\log n$, the homology groups of $\cC(\cP_n,r)$ are very large (the number of cycles grows to infinity).
To prove this statement we show that at the moment before coverage many nontrivial cycles are still being formed. We do this using Morse theory for distance functions (see e.g.~\cite{cheeger_critical_1991,gershkovich_morse_1997}, and Section \ref{sec:morse_dist}).  In Morse theory (see Section \ref{sec:morse}), one counts critical points of functions according to the index of the Hessian (i.e.~the matrix of second order derivatives) at the critical points, with an aim towards computing homology.  The main idea is that a critical point of index $k$ may either give rise to a $k$-dimensional cycle or alternatively signal the death of a $(k-1)$-dimensional cycle. In general, however, there is no rule to determine which of these takes place. As a result, Morse theory ordinarily does not provide an accurate analysis of  homology, but rather a set of inequalities.  In this paper on the other hand, we construct a special type of critical points that are guaranteed to generate cycles in the dimension of their index. We refer to these cycles as ``$\Theta$-cycles" (see Section \ref{sec:theta_cycles}), and use them to show the existence of non-trivial homology. 

The study of critical points leads to some additional information about the finer scale structure.  We see that analogously to the results of \cite{flatto_random_1977},  at an additive $\log\log n$ scale, there seems to be a separation between the vanishing of the different dimensions of homology.  In particular, our results (see Theorem \ref{thm:homology_bounds}) suggest that the exact vanishing threshold of the homology in dimension $k$ is in the range
\[
	[\log n +(k-2)\log\log n, \log n + k\log\log n].
\]
For example, at this fine scale the $1$-dimensional cycles vanish before the $3$-dimensional ones (with high probability).
It is very interesting but beyond the scope of our current methods to try to understand exactly where in these finer scale ranges the various homology groups converge to those of the torus, and the nature of the vanishing results that might occur at that scale.
 
While random geometric complexes provide grounds for rich and deep theoretical probability research, we note that they also find applications in data analysis and network modeling. The rapidly developing field known as \emph{Topological Data Analysis} (TDA) focuses on using topological signatures of data in machine learning and statistics (for some introduction see \cite{carlsson_topology_2009,ghrist_barcodes:_2008,zomorodian_topological_2007}). Geometric complexes play a key role in the conversion of abstract topological questions into a simplified set of algebraic and combinatoric operations that can be coded in software.
Analyzing the behavior of random geometric complexes is therefore imperative in order to provide TDA with rigorous statistical statements  (see e.g.~\cite{chazal_sampling_2009,niyogi_finding_2008,niyogi_topological_2011}). The results in this paper provide information that is asymptotically an improvement of the few estimates on the sample complexity in topological inference problems.  It is also related to \cite{balakrishnan_minimax_2012,balakrishnan_tight_2013}  on the rate of convergence for homology estimation in riemannian manifolds.  

The structure of this paper is as follows. Sections 2-4 provide the necessary background and definitions for this work. The main results are presented in Section 5, describing the phase transition for the vanishing  of homology. Sections 6 and 7 present the main ideas and lemmas used to prove the upper and lower bounds, respectively.
Section 8 presents the detailed proofs for the statements in Sections 5-7. 

Finally, while the results of this paper are proved explicitly only for the case of the cubical torus, they trivially apply to any flat torus (of unit volume, i.e. $\R^d/L$ for any lattice $L$ in $\R^d$). In section 9 we give a heuristic (ineffective) argument why they also should apply to arbitrary closed Riemannian manifolds (normalized to have unit volume).  We hope to return to this question with effective estimates in a later paper.


\section{Topological background}

In this section we wish to provide a brief introduction to homology, \cech complexes and Morse theory, which we will use later in this paper.


\subsection{Homology}\label{sec:homology}


We wish to introduce the concept of homology here in an intuitive rather than a rigorous way. A comprehensive introduction to the topic can be found in \cite{hatcher_algebraic_2002,munkres_elements_1984}.
Let $X$ be a topological space, the \textit{homology} of $X$ is a sequence of abelian groups denoted  $\set{H_i(X)}_{i=0}^\infty$. Homology is a \emph{topological invariant}, namely if $f:X\to Y$ is a homeomorphism, then it induces an isomorphism $f_*:H_*(X)\to H_*(Y)$ between the homology groups.
 
In the case where homology is computed using coefficients in a field  $\mathbb{F}$, then $H_i(X)$ is simply a vector space\footnote{We introduce homology with field coefficients for simplicity. Our results, however, apply to homology with arbitrary coefficients, not just fields.}. The  basis elements of zeroth homology $H_0(X)$  correspond to the connected components of $X$. For example, if $X$ has three connected components, then $H_0(X) \cong \mathbb{F}^3$ (where $\cong$ denotes  isomorphism), and each of the basis element corresponds to a different connected component of $X$. For $k\ge 1$, the basis elements of the $k$-th homology $H_k(X)$ correspond to  $k$-dimensional ``holes" or (nontrivial) ``cycles" in $X$. An intuitive way to think about a $k$-dimensional cycle is as the boundary of a $(k+1)$-dimensional body.
For example, if $X$ a circle then $H_1(X) \cong \mathbb{F}$, if $X$ is a $2$-dimensional sphere then $H_2(X) \cong \mathbb{F}$, and in general if $X$ is a $d$-dimensional sphere, then
\[
H_k(X) \cong \begin{cases} \mathbb{F} & k=0,d \\
\set{0} & \mbox{otherwise}.
\end{cases}
\]
Another example which will be relevant in this paper is the torus. The $2$-dimensional torus $\mathbb{T}^2$ (see Figure \ref{fig:flat_torus}) has a single connected component and a single $2$-dimensional hole (the void inside the surface). This implies that $H_0(\T^2) \cong \F$, and $H_2(\T^2) \cong \F$. As for $1$-cycles (or closed loops) the torus has two different loops, and so $H_1({\mathbb{T}^2}) \cong\F^2$. In this paper we will consider the general $d$-dimensional torus denoted by $\T^d$. In that case it turns out that $H_k(\T^d) \cong \F^{\binom{d}{k}}$.

The last term we want to introduce here is {homotopy equivalence}. 
Let $X,Y$ be two topological spaces. A $\emph{homotopy}$ is a continuous function $F:X\times[0,1]\to Y$, which can be viewed as a indexed sequence of functions $F(\cdot,t):X\to Y$.
Two functions $f_0,f_1:X\to Y$ are called \emph{homotopic} (denoted $f_0\simeq f_1$) if there exists a homotopy $F$ such that $F(\cdot, 0) = f_0$ and $F(\cdot,1) = f_1$. Finally, $f:X\to Y$ is called a \emph{homotopy equivalence} if there exists $g:Y\to X$ such that $g\circ f = \mathrm{id}_X$ and $f\circ g \simeq \mathrm{id}_Y$ ($\mathrm{id}_X$ refers to the identity function on $X$). If there exists a homotopy equivalence $f:X\to Y$ then we say that $X$ and $Y$ are \emph{homotopy equivalent}, and denote it by $X\simeq Y$.
Loosely speaking, $X\simeq Y$ means that we can continuously transform one of the spaces into the other, generalizing the notion of homeomorphic spaces (for example, a ball and a point are homotopy equivalent but not homeomorphic). In particular, if $X\simeq Y$ then it can be shown that they have the same homology, i.e.~$H_k(X)\cong H_k(Y)$ for all $k$. A space that is homotopy equivalent to a single point is called \emph{contractible}.


\subsection{\cech complexes}\label{sec:cech}


For a given set $S$, an \emph{abstract simplicial complex} $\Sigma$ on $S$ is a collection of finite subsets $A\in 2^S$ such that if $A\in \Sigma$ and $B\subset A$ then also $B\in \Sigma$.
We refer to  sets $A\in\Sigma$ with $\abs{A} = k+1$ as the $k$-simplexes or $k$-faces in $\Sigma$.
In this paper we study an abstract simplicial complex known as the \cech complex, defined in the following way.


\begin{defn}[\cech complex]\label{def:cech_complex}
Let $\cP = \set{x_1,x_2,\ldots,x_n}$ be a collection of points in a metric space $(X,\rho)$, and let $r>0$ and let $B_r(x)$ be the ball of radius $r$ around $x$. The \cech complex $\cC(\cP, r)$ is constructed as follows:
\begin{enumerate}
\item The $0$-simplexes (vertices) are the points in $\cP$.
\item A $k$-simplex $[x_{i_0},\ldots,x_{i_k}]$ is in $\cC(\cP,r)$ if $\bigcap_{j=0}^{k} {B_{r}(x_{i_j})} \ne \emptyset$.

\end{enumerate}
\end{defn}



\begin{figure}[h]
\centering
\includegraphics[scale=0.4]{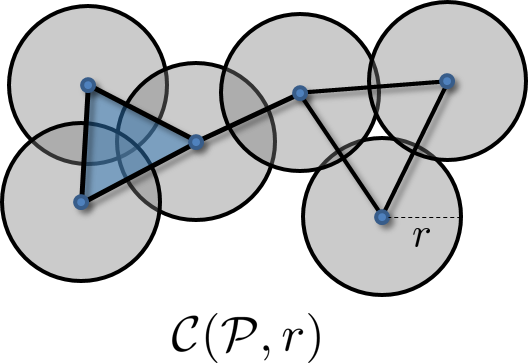}
\caption{\label{fig:cech}  A \cech complex generated by a set of points in $\R^2$. The complex has 6 vertices (0-simplexes), 7 edges (1-simplexes) and one triangle (a 2-simplex). }
\end{figure}

Associated with the \cech complex $\cC(\cP,r)$ is the union of balls used to generate it, which we define as
\begin{equation}\label{eq:union_balls}
	\cU(\cP, r) = \bigcup_{p\in \cP}B_r(p).
\end{equation}
The spaces $\cC(\cP,r)$ and $\cU(\cP,r)$ are of a completely different nature. Nevertheless, the following lemma claims that topologically they are very similar.
This lemma a special case of a more general topological statement originated in \cite{borsuk_imbedding_1948} and commonly referred to as the `Nerve Lemma'.


\begin{lem}\label{lem:nerve}
Let $\cC(\cP,r)$ and $\cU(\cP,r)$ as defined above. If for every $x_{i_1},\ldots,x_{i_k}$ the intersection $B_r(x_{i_1})\cap \cdots\cap B_r(x_{i_k})$ is either empty or contractible (homotopy equivalent to a point), then $\cC(\cP,r)\simeq \cU(\cP,r)$, and in particular,
\[
	H_k(\cC(\cP,r)) \cong H_k(\cU(\cP,r)),\quad \forall k\ge 0.
\]
\end{lem}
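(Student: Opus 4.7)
The plan is to prove the stronger statement $\cC(\cP,r)\simeq\cU(\cP,r)$ (from which the homology isomorphism is immediate) via the classical ``blowup'' construction that realizes both spaces as images of a common auxiliary space. Write $\cP=\{p_1,\ldots,p_n\}$, let $U_i=B_r(p_i)$, let $\Delta^{n-1}$ be the standard simplex with vertices $v_1,\ldots,v_n$, and consider
$$M:=\left\{(x,t)\in\cU(\cP,r)\times\Delta^{n-1}\;:\;t_i>0\Rightarrow x\in U_i\right\}.$$
Define $\pi_1:M\to\cU(\cP,r)$ by $\pi_1(x,t)=x$, and $\pi_2:M\to|\cC(\cP,r)|$ by $\pi_2(x,t)=\sum_i t_iv_i$. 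The condition $t_i>0\Rightarrow x\in U_i$ guarantees that the support of $t$ indexes a family of balls with common point $x$, so the image of $\pi_2$ lies in the geometric realization of $\cC(\cP,r)$; conversely, for any simplex $[v_{i_0},\ldots,v_{i_k}]$ in $\cC(\cP,r)$ one can pick $x\in\bigcap_j U_{i_j}$, so $\pi_2$ is surjective.

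The plan is now to show both projections are homotopy equivalences. For $\pi_1$: the fiber over $x$ is the geometric simplex spanned by $\{v_i:x\in U_i\}$, which is convex and therefore contractible; in fact a genuine global section and deformation retraction can be written down explicitly using a partition of unity $\{\varphi_i\}$ subordinate to $\{U_i\}$ by setting $s(x)=(x,(\varphi_i(x))_i)$, and contracting each fiber linearly onto the section. This step uses nothing about the $U_i$ beyond that they are open. For $\pi_2$: the fiber over a point in the relative interior of the simplex spanned by $\{v_i:i\in I\}$ is precisely $\bigcap_{i\in I}U_i$, which by the standing hypothesis is contractible. The conclusion that $\pi_2$ is a homotopy equivalence then follows either from the Vietoris--Begle theorem in an appropriate setting, or more concretely by induction on the skeleta of $\cC(\cP,r)$, attaching one simplex at a time: at each stage the attaching map is, up to homotopy, an inclusion whose ``new piece'' is $\bigcap_{i\in\sigma}U_i\times\sigma$ glued along $\bigcap_{i\in\sigma}U_i\times\partial\sigma$, and contractibility of the intersection reduces this to attaching a cell in the standard way.

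The main obstacle is making the inductive step rigorous: specifically, verifying that each gluing is a homotopy pushout and that the relevant inclusions are cofibrations, so that the gluing lemma (the pushout of a homotopy equivalence along a cofibration is a homotopy equivalence) applies uniformly across skeleta. For open balls in $\T^d$ this is routine — each $U_i$ is open and locally contractible, and the finite intersections inherit a CW structure compatible with the filtration — but it must be checked to conclude that the skeleton-by-skeleton homotopy equivalences assemble into a global one. Once both $\pi_1$ and $\pi_2$ are shown to be homotopy equivalences, one obtains $\cU(\cP,r)\simeq M\simeq|\cC(\cP,r)|$, and hence $H_k(\cU(\cP,r))\cong H_k(\cC(\cP,r))$ for all $k$, as required.
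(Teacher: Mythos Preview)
The paper does not prove this lemma: it is stated as a special case of the classical Nerve Lemma, with a citation to Borsuk, and is used as a black box throughout. So there is no argument in the paper to compare your proposal against.

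Your outline is the standard modern proof via the blowup (homotopy colimit) construction, and it is correct. Two small remarks. First, in the paper $B_r(x)$ denotes a \emph{closed} ball, so the partition-of-unity section for $\pi_1$ requires a minor adjustment (e.g.\ take a partition subordinate to slightly enlarged open balls, or note that for a finite closed cover with this structure one can still produce a continuous section by normalizing the functions $x\mapsto\max(0,r-\rho(x,p_i))$). Second, the Vietoris--Begle theorem gives only a (co)homology isomorphism, not a homotopy equivalence, so for the full statement $\cC(\cP,r)\simeq\cU(\cP,r)$ you should commit to the skeletal induction; you have correctly identified that the substance of the proof lies in checking the cofibration/pushout conditions there, and for a finite cover by convex balls on $\T^d$ these are routine.
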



Consequently, we will sometimes be using $\cU(\cP,r)$ to make statements about $\cC(\cP,r)$. This will be very useful especially when coverage arguments are available. Note that in Figure \ref{fig:cech} indeed both $\cC(\cP,r)$ and $\cU(\cP,r)$ have a single connected component and a single hole.


\subsection{Morse theory for the distance function}\label{sec:morse_dist}


The distance function defined below and its critical points will play a key role in our analysis of the vanishing threshold for homology. In this section we wish to define the distance function and its critical points, and briefly introduce Morse theory.

Let $\cP$ be a finite set of points in a metric space $(X,\rho)$. We define the distance function from $\cP$ as follows -
\begin{equation}\label{eq:dist_fn}
	\rho_{_\cP}(x) = \min_{p\in \cP} \rho(x,p),\quad \forall x\in X.
\end{equation}
Our  interest in this function stems from the following straightforward observation about the sublevel sets of the distance function:
\[
	\rho_{_\cP}^{-1}((-\infty,r]) = 	\rho_{_\cP}^{-1}([0,r]) = \cU(\cP,r).
\]
In other words, the sublevel sets of the distance functions are exactly the union of balls associated with the \cech complex.
The idea of Morse theory is to link the study of critical points of functions with the changes to the homology of their sublevel sets.
In analyzing the vanishing of homology for the \cech complex, this link will show up to be highly useful.

\subsubsection{Critical points of the distance function}\label{sec:crit_pts}

The classical definition of critical points in calculus is as follows. Let $f:\R^d\to\R$ be a $C^2$ function. A point $c\in \R$ is called a \textit{critical point} of $f$ if $\nabla f (c) =0$, and the real number $f(c)$ is called a \textit{critical value} of $f$. A critical point $c$ is called \textit{non-degenerate} if the Hessian matrix $H_f(c)$ is non-singular. In that case, the \textit{Morse index} of $f$ at $c$, denoted by $\mu(c)$ is the number of negative eigenvalues of $H_f(c)$. 

Note that the distance function $\rho_{_\cP}$ defined in \eqref{eq:dist_fn} is not everywhere differentiable, therefore the definition above does not apply. To overcome this problem we will use \emph{Morse theory for min-type functions} which was developed in \cite{gershkovich_morse_1997}.  Min-type functions are of the form: $f(x) = \min_{1\le i \le m} \alpha_i(x)$ where $\alpha_1,\ldots,\alpha_m$ are smooth functions. The work in \cite{gershkovich_morse_1997} provides  the definitions of regular and critical points for this type of functions, and shows that large parts of Morse theory (discussed below) can be applied to these functions as well.

In this section we wish to introduce the definitions of critical points for $\rho_{_\cP}$ for the special case where $\cP\subset\R^d$ (with the Euclidean metric $\rho(x,y) = \norm{x-y}$). Later on, we will see how to extend these ideas to the $d$-dimensional torus.
Note that the distance function $\rho_{_\cP}$ does not fall directly into this category of min-type functions, since the function $\rho(p,\cdot)$ is non-differentiable at $p$. However, the squared-distance function $\rho_{_\cP}^2$ is a true min-type function for which the results in \cite{gershkovich_morse_1997} apply. The sublevel sets of $\rho_{_\cP}^2$ are still union of balls, i.e.~$(\rho_{_\cP}^2)^{-1}((-\infty,r]) = \cU(\cP, \sqrt{r})$, and therefore any conclusion we can make using Morse theory for $\rho_{_\cP}^2$ could be naturally translated to similar statements about the sublevel sets of $\rho_{_\cP}$. Hence, from here on, we will talk about critical points and Morse theory for $\rho_{_\cP}$, while formally we mean $\rho_{_\cP}^2$.

The following definitions for the critical points of $\rho_{_\cP}$  appeared already in \cite{bobrowski_distance_2014}, and we include them here for completeness. They are merely an adaptation of the general definitions appeared in \cite{gershkovich_morse_1997} (Section 3.1) to the distance function case.
Start with the local (and global) minima of $\rho_{_\cP}$, which are all the points in 
$\cP$ where $\rho_{_\cP} = 0$, and call these
\emph{critical points with index $0$}. For higher indices,
we have  the following.

\begin{defn}\label{def:crit_pts}
A point $c\in\R^d$ is \emph{a critical point of  $\rho_{_\cP}$ with index $1 \le k \le d$} if there exists a subset $\cY\subset\cP$ of $k+1$ points such that:
\begin{enumerate}
\item $\forall y\in \cY:  \rho_{_\cP}(c) = \norm{c-y} $, and, $\forall p\in \cP \backslash \cY$ we have $\norm{c-p} > \rho_{_\cP}(p)$.
\item The points in $\cY$ are in general position (i.e.\! the $k+1$ points
of $\cY$ do not lie in a $(k-1)$-dimensional affine space).
\item The point $c$ lies inside the  open $k$-simplex spanned by $\cY$.
\end{enumerate}
\end{defn}
The first condition ensures that $\rho_{_\cP} \equiv \rho_{_\cY}$ in a neighborhood of $c$.
The second condition implies that the set  $\cY$ lies on a unique $(k-1)$- dimensional sphere.
We note that this generality requirement is satisfied almost surely when the points are generated by a Poisson process.
 We shall use the following notation:
\begin{align}
S(\cY) &= \textrm{The unique $(k-1)$-dimensional sphere containing $\cY$},\label{eq:def_S}\\
C(\cY) &= \textrm{The center of $S(\cY)$ in $\R^d$}, \label{eq:def_C}\\
R(\cY) &= \textrm{The radius of $S(\cY)$} , \label{eq:def_R}\\
B(\cY) &= \textrm{The open ball in $\R^d$ with radius $R(\cY)$ centered at $C(\cY)$}, \label{eq:def_B} \textrm{and} \\
\Delta(\cY) &= \textrm{The open $k$-simplex spanned by the $k+1$ points in $\cY$}.
\label{eq:def_Delta}
\end{align}
Note that $S(\cY)$ is a $(k-1)$-dimensional sphere, whereas $B(\cY)$ is a $d$-dimensional ball. Obviously, $S(\cY) \subset \partial B(\cY)$, but unless $k=d$, $S(\cY)$ is
{\it not} the boundary of $B(\cY)$.
Since the critical point $c$ in Definition \ref{def:crit_pts} is equidistant from all the points in $\cY$, we have that $c=C(\cY)$. Thus, we say that $c$ is the unique index $k$ critical point \textit{generated} by the $k+1$ points in $\cY$.
The last statement can be rephrased as follows:


\begin{lem} \label{lem:gen_crit_point}
A subset $\cY\subset \cP$ of $k+1$ points in general position generates an index $k$ critical point if and only if the following two conditions hold:
\[
	(1)\ \ C(\cY) \in \Delta(\cY),\quad\textrm{ and }\quad (2)\ \ \cP \cap {B(\cY)}= \emptyset
\]
The critical point generated is $C(\cY)$, and the critical value is $R(\cY) = \rho_{_\cP}(C(\cY))$.
\end{lem}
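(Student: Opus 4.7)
The plan is to prove this Lemma by recognizing it as a geometric repackaging of Definition \ref{def:crit_pts}, with the key bridge being the identification of the critical point $c$ with the circumcenter $C(\cY)$. No deep tools are needed beyond elementary Euclidean geometry; the main care is in bookkeeping the boundary of $B(\cY)$.

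For the forward direction, suppose $\cY$ generates an index-$k$ critical point $c$ in the sense of Definition \ref{def:crit_pts}. Bullet (1) of the definition says $c$ is equidistant from all points of $\cY$, at common distance $r = \rho_{_\cP}(c)$. Combined with the general-position hypothesis in bullet (2), the set of points in the affine hull $A$ of $\cY$ equidistant from the $k+1$ points of $\cY$ reduces to a single point, namely the circumcenter $C(\cY)$. Bullet (3) forces $c \in \Delta(\cY) \subset A$, so we conclude $c = C(\cY)$ and $r = R(\cY)$. This identification, together with bullet (3), immediately yields condition (1) of the Lemma. For condition (2), note that points $y \in \cY$ satisfy $\|c - y\| = R(\cY)$ and hence lie on $\partial B(\cY)$, not inside the open ball; meanwhile bullet (1) gives $\|c - p\| > r = R(\cY)$ for every $p \in \cP \setminus \cY$, placing these points outside $\overline{B(\cY)}$. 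Therefore $\cP \cap B(\cY) = \emptyset$.

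For the reverse direction, assume $\cY$ is in general position and Lemma conditions (1) and (2) hold; set $c := C(\cY)$. By the definition of $S(\cY)$, every $y \in \cY$ satisfies $\|c - y\| = R(\cY)$, so $\rho_{_\cP}(c) \le R(\cY)$. Condition (2), $\cP \cap B(\cY) = \emptyset$, gives $\|c - p\| \ge R(\cY)$ for every $p \in \cP$, whence $\rho_{_\cP}(c) = R(\cY)$. Hence the equidistance clause of bullet (1) of the Definition holds, and the strict inequality $\|c - p\| > \rho_{_\cP}(c)$ for $p \in \cP \setminus \cY$ is automatic once we observe that almost surely no $k+2$ points of a Poisson process are cospherical, so $\cY$ is exactly the set of points of $\cP$ lying on $\partial B(\cY)$. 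Bullet (2) is the standing general-position assumption, and bullet (3) is just Lemma condition (1). Finally, the statement that $c = C(\cY)$ is the generated critical point and that the critical value is $R(\cY) = \rho_{_\cP}(C(\cY))$ has been established along the way.

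The only conceptual step worth highlighting — and what I expect to be the subtlest bookkeeping — is that Lemma condition (2) uses the \emph{open} ball $B(\cY)$ in order to simultaneously encode two statements from the Definition: that the points of $\cY$ realize the minimum distance $R(\cY)$ (living on $\partial B(\cY)$), and that no other point of $\cP$ is as close (living strictly outside $\overline{B(\cY)}$, up to the generic non-cospherical condition). Once this encoding is made explicit, everything else is an unraveling of definitions.
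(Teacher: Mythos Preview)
Your proof is correct and matches the paper's approach: the paper does not give a separate proof of this lemma at all, merely noting just before the statement that ``since the critical point $c$ in Definition~\ref{def:crit_pts} is equidistant from all the points in $\cY$, we have that $c=C(\cY)$,'' and then presenting the lemma as a rephrasing of the definition. Your argument is exactly this unpacking carried out in full, so there is nothing to contrast.

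One small remark: in the reverse direction you invoke the almost-sure non-cosphericality of the Poisson process to get the strict inequality $\|c-p\|>R(\cY)$ for $p\in\cP\setminus\cY$. Strictly speaking the lemma is phrased for an arbitrary finite $\cP\subset\R^d$, so this step imports a probabilistic genericity assumption into a deterministic statement. The paper is aware of this and explicitly remarks that the required generality ``is satisfied almost surely when the points are generated by a Poisson process''; since the lemma is only ever applied in that setting, your handling is fine, but it is worth flagging that the equivalence as literally stated needs this extra genericity (no point of $\cP\setminus\cY$ lies on $\partial B(\cY)$) to close.
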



Figure \ref{fig:crit_pts} depicts the generation of an index $2$ critical point in $\R^2$ by  subsets of $3$ points.


\begin{figure}[h]
\begin{center}
\includegraphics[scale=0.3]{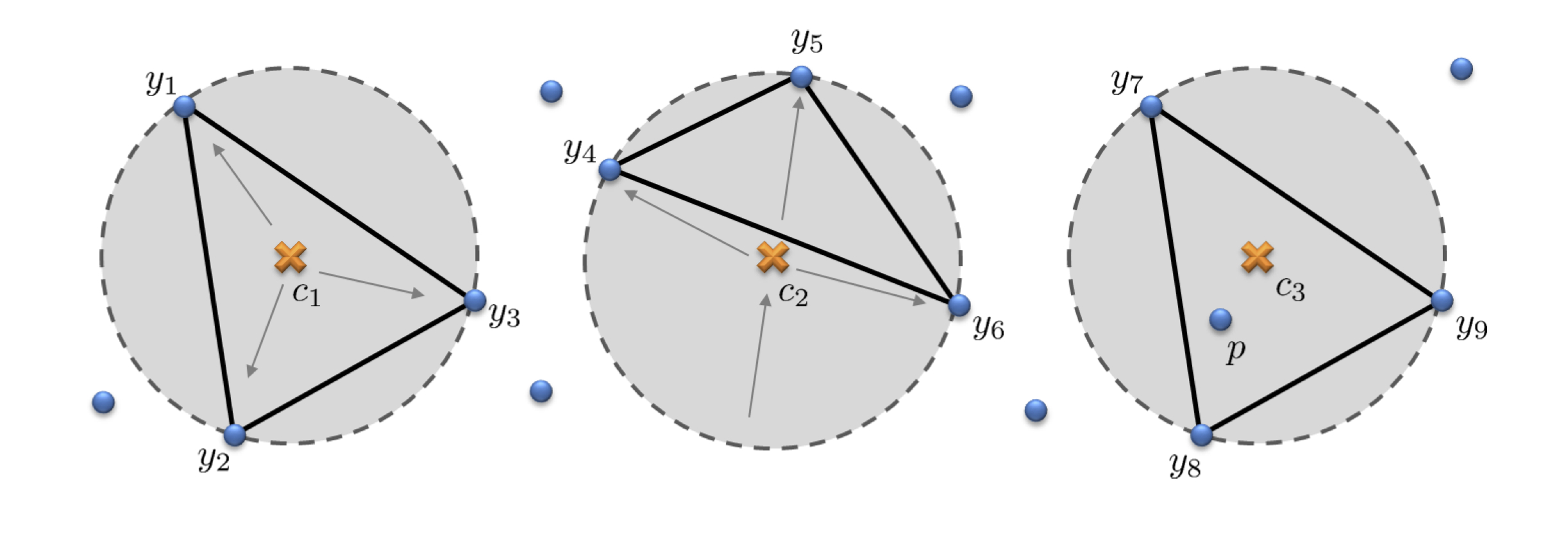}
\caption{\label{fig:crit_pts} Generating a critical point of index $2$ in $\R^2$, i.e.~a maximum point.
The small blue disks are the points of  $\cP$. We examine three subsets of $\cP$: $\cY_1 = \set{y_{_1},y_{_2},y_{_3}}$, $\cY_2 = \set{y_{_4},y_{_5},y_{_6}}$, and $\cY_3 = \set{y_{_7},y_{_8},y_{_9}}$. $S(\cY_i)$ are the dashed circles, whose centers are $C(\cY_i) = c_i$. The shaded balls are $B(\cY_i)$, and the interior of the triangles are $\Delta(\cY_i)$. The arrows represent the flow direction. For $\cY_1$ both conditions in Lemma \ref{lem:gen_crit_point} hold and therefore $c_1$ is a critical point. However, for $\cY_2$ condition (1) does not hold, and for $\cY_3$ condition (2) fails, therefore $c_2$ and $c_3$ are not critical points.}
\end{center}
\end{figure}


\subsubsection{Morse Theory}\label{sec:morse}


The study of homology is strongly connected to the study of critical points of real valued functions. The link between them is called Morse theory, and we shall describe it here briefly. For a more detailed introduction, we refer the reader to \cite{milnor_morse_1963}.

The main idea of Morse theory is as follows. Suppose that $\cM$ is a closed manifold (a compact manifold without boundary), and let $f:\cM\to \R$ be a Morse function.
Denote
\[
\cM_{v} := f^{-1}((-\infty,v]) = \set{x\in \cM : f(x) \le v}\subset \cM
\] (sublevel sets of $f$).
If there are no critical values in $(a,b]$, then $\cM_a$ and $\cM_b$ are homotopy equivalent and in particular $H_k(\cM_a)\cong H_k(\cM_b)$ for all $k$.
Next, suppose that $c$ is a critical point of $f$ with Morse index $k$, and let $v=f(c)$ be the critical value at $c$. Then at $\cM_{v}$  homology changes in the following way. For a small enough $\eps$ we have that the homology of $\cM_{v+\eps}$ is obtained from the homology of $\cM_{v-\eps}$ by either adding  a generator to $H_k$ (increasing its dimension by one) or removing a generator of $H_{k-1}$ (decreasing its dimension by one). In other words, as we pass a critical value, either a new $k$-dimensional cycle is formed, or an existing $(k-1)$-dimensional cycle is terminated.

While classical Morse theory deals with smooth (or $C^2$) Morse functions on compact manifolds \cite{milnor_morse_1963} it has many generalizations, and the extension to ``min-type'' functions presented in \cite{gershkovich_morse_1997} enables one to apply similar concepts to the distance function $\rho_{_\cP}$ as well. In particular,  the critical points  defined in Section \ref{sec:crit_pts} have similar effect on the homology as in classical Morse theory.


\section{The $d$-dimensional flat torus}\label{sec:flat_torus}


The results in this paper should apply  to homogeneous Poisson processes generated on any compact Riemannian manifold, as we discuss in Section \ref{sec:riemannian}. However, for the sake of this first investigation we will focus on a single special case - the $d$-dimensional flat cubical torus.

By `flat torus' we refer to the quotient $\T^d  = \R^d / \Z^d$. In other words, the flat torus can be thought of as the unit cube in $\R^d$, with its opposite sides ``glued" together (see Figure \ref{fig:flat_torus}). Consequently, the metric $\rho$ on $\T^d$, known as the \emph{toroidal} metric,  is given by
\[
\rho(x,y) = \min_{\Delta\in \Z^d} \norm{x-y+\Delta},\quad x,y\in [0,1]^d.
\]
The main advantage of the flat torus is that it allows us to work with a simple (almost) Euclidean metric, while avoiding boundary effects that exist, for example, in the cube.

\begin{figure}[h]
\centering
\includegraphics[scale=0.4]{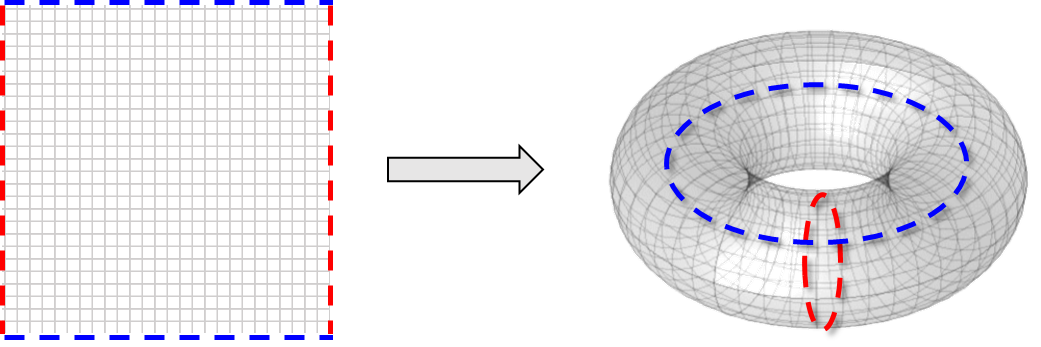}
\caption{\label{fig:flat_torus}  The flat torus $\T^2$ is obtained by taking the unit cube in $\R^2$ and  identifying the two pairs of opposite sides. This identification creates the two loops in the torus.}
\end{figure}

More specifically, the \emph{radius of convexity} $r_{\conv}$ of a Riemannian manifold $\cM$ is defined as the largest value $r$ such that every ball of radius $r$ in $\cM$ is convex. It is also the threshold below which any two points in a Riemannian ball can be connected by a unique short geodesic that lies entirely within the ball. In the case of flat torus we have $r_{\conv} = 1/2$, and it can be shown that every ball in $\T^d$ of radius less than $r_{\conv}$ can be isometrically embedded into $\R^d$. Since we will mostly be considering infinitesimally small neighborhoods, our calculations will be as if we are working in the Euclidean space.

Our choice to state and prove the results in this paper for the flat torus $(\T^d,\rho)$ stems from the relative elegant calculations involved using the toroidal metric. While locally, the torus $\T^d$ (as any other closed manifold) looks almost identical to the Euclidean space $\R^d$, some caution is required when considering global phenomena. Next, we wish to discuss the necessary adjustments needed to study the \cech complex and the distance function presented in Sections \ref{sec:cech} and \ref{sec:morse_dist}.


\subsection{The \cech complex on the torus}\label{sec:torus_cech}


A large part of our analysis will rely on the Nerve Lemma \ref{lem:nerve} (stating that $\cC(\cP,r)\simeq \cU(\cP,r)$). For this lemma to apply to $\cC(\cP,r)$ we need  the intersections of balls of radius $r$ to be  either empty or contractible. For balls in $\R^d$ this is always true, but on the torus (and other compact manifolds), this is not true for all radii. For example, on the torus, even a single ball of radius $1/2$ is not contractible (it covers the $1$-cycles of the torus). It is true however, for any radius smaller than $r_{\conv}$, and therefore we limit our discussion to the \cech complex $\cC(\cP,r)$ with $r< r_{\conv}$.
As we will see later, the vanishing of homology which we are seeking occurs with high probability at $r = o( r_{\conv})$, and therefore this restriction will not affect our results.

\subsection{The distance function on the torus}\label{sec:torus_dist_fn}

A similar caution is required studying the Morse theory for the distance function. In Section \ref{sec:morse_dist} we  defined the notion of critical points for the distance function $\rho_{_\cP}$ for $\cP\subset\R^d$ using the Morse theory for min-type functions in \cite{gershkovich_morse_1997}. We were able to do so due to the fact that $\rho^2(p,\cdot):\R^d\to\R$ is smooth, and therefore $\rho^2_{_\cP} = \min_{p\in\cP}\rho^2(p,\cdot)$ is a min-type function.
However, on the torus (as any compact Riemannian manifold) the situation is more delicate. If $p\in\T^d$, then $\rho^2(p,\cdot):\T^d\to\R$ is smooth at $x\in \T^d$ only if the geodesic from $x$ to $p$ is unique. Consequently, we have to be careful about how we apply Morse theory for min-type function in this case. We start with the following lemma.


\begin{lem}
Let $\cP\subset \T^d$ be a finite set. If $r < r_{\conv}$ then $\rho_{_\cP}^2:\cU(\cP,r)\to\R$ is a min-type function. 
\end{lem}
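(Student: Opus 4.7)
The plan is to show that on $\cU(\cP,r)$ the squared distance function $\rho_{_\cP}^2$ equals the minimum of a finite family of globally smooth functions on $\T^d$, one for each point of $\cP$. The obstruction to naively setting $\alpha_p(x) := \rho^2(p,x)$ is that $\rho^2(p,\cdot):\T^d\to\R$ is smooth only outside the cut locus of $p$; so the naive $\alpha_p$'s are not smooth on the full domain $\cU(\cP,r)$. The remedy is a smooth cutoff that replaces $\rho^2(p,\cdot)$ by a constant well before reaching any cut locus, while leaving it untouched on $B_r(p)$, where alone it participates in the minimum.

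First I would record two elementary geometric facts on the flat torus. For any $p\in\T^d$, the restriction of $\rho^2(p,\cdot)$ to $B_{r_{\conv}}(p)$ is $C^\infty$, since in this ball every point is joined to $p$ by a unique short geodesic and the exponential chart identifies the ball isometrically with a Euclidean ball on which the squared distance is just $\|y\|^2$. Secondly, for every $x\in\cU(\cP,r)$ we have $\rho_{_\cP}(x)\le r<r_{\conv}$, and any $p^\star$ achieving the minimum satisfies $x\in B_r(p^\star)\subset B_{r_{\conv}}(p^\star)$.

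Next I would build the cutoff. Fix $\e>0$ with $r+\e<r_{\conv}$ and choose a smooth function $\chi:[0,\infty)\to[0,\infty)$ that is non-decreasing, equal to the identity on $[0,r^2]$, and equal to a constant $M\ge r^2$ on $[(r+\e)^2,\infty)$ (such a $\chi$ exists by a standard smooth-bump construction, matching the slope $\chi'(r^2)=1$ on the interpolation window $[r^2,(r+\e)^2]$). For each $p\in\cP$ define
\[
  \alpha_p(x)\;:=\;\begin{cases}\chi\bigl(\rho^2(p,x)\bigr),& x\in B_{r_{\conv}}(p),\\ M,& x\in \T^d\setminus B_{r_{\conv}}(p).\end{cases}
\]
On the overlap $\{x:r+\e\le \rho(p,x)<r_{\conv}\}$ both branches equal $M$, and all derivatives of $\chi\circ\rho^2(p,\cdot)$ vanish there because $\chi$ is constant on $[(r+\e)^2,\infty)$; so the two pieces glue to a genuinely $C^\infty$ function $\alpha_p$ on all of $\T^d$, which in particular is smooth on $\cU(\cP,r)$.

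Finally I would verify the min-type identity. Fix $x\in\cU(\cP,r)$ and let $p^\star\in\cP$ attain $\rho_{_\cP}(x)$. Since $\rho(p^\star,x)\le r$, one has $\rho^2(p^\star,x)\le r^2$, hence $\alpha_{p^\star}(x)=\chi(\rho^2(p^\star,x))=\rho^2(p^\star,x)=\rho_{_\cP}^2(x)$. For any other $p\in\cP$ one splits into three cases: if $\rho(p,x)\le r$ then $\alpha_p(x)=\rho^2(p,x)\ge \rho_{_\cP}^2(x)$; if $r<\rho(p,x)<r_{\conv}$ then monotonicity of $\chi$ gives $\alpha_p(x)\ge \chi(r^2)=r^2\ge \rho_{_\cP}^2(x)$; if $\rho(p,x)\ge r_{\conv}$ then $\alpha_p(x)=M\ge r^2\ge \rho_{_\cP}^2(x)$. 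Hence $\rho_{_\cP}^2(x)=\min_{p\in\cP}\alpha_p(x)$ on $\cU(\cP,r)$, exhibiting $\rho_{_\cP}^2$ as a min-type function. The only delicate step is the cutoff construction in the second paragraph, specifically arranging that the transition from $\chi\circ\rho^2(p,\cdot)$ to the constant $M$ occurs strictly inside $B_{r_{\conv}}(p)$ so that no singularity at the cut locus of $p$ contaminates the smoothness of $\alpha_p$; everything else is bookkeeping.
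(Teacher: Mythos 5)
Your proof is correct, and it takes a genuinely different route from the paper's. The paper argues \emph{locally}: for each $x$ with $\rho_{_\cP}(x)<r_{\conv}$ it passes to the subset $\cP_x$ of points of $\cP$ within distance $r_{\conv}$ of $x$ and shrinks to a neighborhood $N_x$ on which $\rho_{_\cP}^2=\min_{p\in\cP_x}\rho^2(p,\cdot)$ with each term smooth, concluding that $\rho_{_\cP}^2$ defines a \emph{germ} of a min-type function at every point of $\cU(\cP,r)$ --- exactly the notion needed to invoke the Gershkovich--Rubinstein framework, which is formulated locally anyway. You instead produce a single \emph{global} representation $\rho_{_\cP}^2|_{\cU(\cP,r)}=\min_{p\in\cP}\alpha_p$ with each $\alpha_p$ smooth on all of $\T^d$, obtained by flattening $\rho^2(p,\cdot)$ with a monotone cutoff $\chi$ that saturates to a constant $M\ge r^2$ strictly before the cut locus of $p$ is reached; the case analysis then verifies that the cutoff never alters the minimizer on $\cU(\cP,r)$. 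Both proofs are sound. The paper's is shorter and delivers precisely what the min-type Morse theory requires; yours is more constructive, dispenses with the $x$-dependent data $\cP_x$ and $N_x$, and exhibits one fixed finite smooth family that works uniformly on the whole sublevel set, at the modest cost of the bump-function construction. One further point in its favor: the only Riemannian input you use is that $\rho^2(p,\cdot)$ is smooth on $B_{r_{\conv}}(p)$, which follows from the Gauss lemma alone and does not require flatness, so your argument transfers verbatim to any compact Riemannian manifold, in line with the heuristic generalization sketched in Section 9.
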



\begin{proof}
Although  $\rho^2(p,\cdot)$ is not  everywhere smooth in $\T^d$, it is smooth  in the ball  $B_r(p)$ for every $r < r_{\conv}$. 
Let  $x\in\T^d$ be such that $\rho_{_\cP}(x) < r_{\conv}$. Denoting $\cP_x := \set{p\in\cP  : \rho(p,x) < r_{\conv}}$ we can find 
 a small neighborhood $N_x$ such that $\rho(p,y) < r_{\conv}$ for every $p\in \cP_x$ and $y\in N_x$ and such that
\[
	\rho_{_\cP}(y) = \min_{p\in\cP_x} \rho(p,y),\quad \forall y\in N_x.
\]
 Thus, in $N_x$ we have that $\rho_{_\cP}^2$ is a min-type function (the minimum of smooth functions).
We conclude that if $r<r_{\conv}$ then in the set $\cU(\cP,r)$ the  function   $\rho_{_\cP}^2$  defines a germ of a min-type function.

\end{proof}

{\bf Remark:} This lemma allows us to study the homology of $\cU(\cP,r)$ for $r< r_{\conv}$ using the distance function, similar to $\R^d$. Note that the restriction $r<r_{\conv}$ is similar to the one that arose form the Nerve Lemma.


Another issue we need to resolve is the uniqueness of critical points.
If  $\cP\subset\R^d$ we saw in Lemma \ref{lem:gen_crit_point} that the critical point generated by a subset $\cY\subset\cP$ is unique. On the torus, this is not necessarily true, even if $r<r_{\conv}$. For example, the set $\cP = \set{(0.2,0), (0.8,0)}\subset\T^2$ generates two critical points of index $1$ (saddle points) at $c_1 = (0,0)$ and $c_2 = (0.5,0)$, with $\rho_{_\cP}(c_1) = 0.2$ and $\rho_{_\cP}(c_2) = 0.3$, both values are less than $r_{\conv}=0.5$. 
Requiring that $r<r_{\conv}/3$ resolves this issue as the following lemma states.


\begin{lem}
Let $r< r_{\conv}/3$. If $c\in \cU(\cP,r)$ is a critical point of index $k$, then it is generated by a subset $\cY\subset\cP$ of $k+1$ points (similarly to Definition \ref{def:crit_pts}). Furthermore, $c$ is the unique critical point in $\cU(\cP,r)$ generated by $\cY$.
\end{lem}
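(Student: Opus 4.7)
The plan is to carry the whole configuration into a single Euclidean ball via the isometric embedding of a geodesic ball of radius $<r_{\conv}$ afforded by the flat torus, and then invoke Lemma~\ref{lem:gen_crit_point}. Throughout, note that $c\in\cU(\cP,r)$ with $r<r_{\conv}/3$ forces $\rho_{_\cP}(c)<r$.

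For existence, set $\cY:=\set{p\in\cP:\rho(p,c)=\rho_{_\cP}(c)}$, the nearest points of $\cP$ to $c$. For any $y_0,y\in\cY$ the triangle inequality gives $\rho(y_0,y)\le 2\rho_{_\cP}(c)<2r$, so $\cY\cup\set{c}\subset B_{2r}(y_0)\subset B_{r_{\conv}}(y_0)$. Let $\phi:B_{r_{\conv}}(y_0)\to\R^d$ be the isometric embedding into Euclidean space. Near $c$ the function $\rho_{_\cP}^2$ agrees with $\min_{p\in\cY}\rho^2(p,\cdot)$, and $\phi$ carries this germ to the squared Euclidean distance function from $\phi(\cY)$. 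The min-type critical point structure, Morse index included, is invariant under isometries, so $\phi(c)$ is a Euclidean index-$k$ critical point of the distance function from $\phi(\cY)$. Applying Lemma~\ref{lem:gen_crit_point} in $\R^d$ yields $|\cY|=k+1$, general position of $\phi(\cY)$, and $\phi(c)=C(\phi(\cY))\in\Delta(\phi(\cY))$. For $p\in\cP\setminus\cY$, either $p\notin B_{r_{\conv}}(y_0)$, in which case $\rho(p,c)\ge r_{\conv}-r>r>\rho_{_\cP}(c)$ and $p$ lies outside $B(\cY)$ automatically, or $p\in B_{r_{\conv}}(y_0)$, in which case the Euclidean emptiness condition $\phi(p)\notin B(\phi(\cY))$ transfers back via $\phi$. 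This gives the torus analogue of Definition~\ref{def:crit_pts}, so $c$ is generated by $\cY$.

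For uniqueness, suppose $c,c'\in\cU(\cP,r)$ are both generated by the same $\cY$. Then $\cY\subset B_r(c)\cap B_r(c')$ and $\rho(c,c')<2r$, so for any fixed $y_0\in\cY$ every element of $\cY\cup\set{c,c'}$ is within distance $3r<r_{\conv}$ of $y_0$. Embedding this configuration via $\phi$, both $\phi(c)$ and $\phi(c')$ are equidistant from the $k+1$ general-position points $\phi(\cY)$ and lie in the $k$-dimensional affine span $\Delta(\phi(\cY))$; in $\R^d$ there is a unique such point, namely the Euclidean circumcentre $C(\phi(\cY))$. Hence $\phi(c)=\phi(c')$ and therefore $c=c'$.

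The main obstacle is really just the radius bookkeeping: every geodesic that enters the argument—between points of $\cY$, from $\cY$ to $c$ or $c'$, and between $c$ and $c'$—must lie in one convex ball for the single isometric chart $\phi$ to govern all the relevant distances simultaneously. The worst extent $\rho(y_0,c)+\rho(c,c')\le 3r$ is exactly what forces the hypothesis $r<r_{\conv}/3$. Once the configuration is inside a single Euclidean ball, both halves of the lemma collapse to the Euclidean content of Lemma~\ref{lem:gen_crit_point}.
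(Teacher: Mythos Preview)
Your argument is correct and follows essentially the same approach as the paper: embed a small geodesic ball isometrically into $\R^d$ and invoke the Euclidean theory (Definition~\ref{def:crit_pts} and Lemma~\ref{lem:gen_crit_point}). The only cosmetic difference is that you centre the chart at a point $y_0\in\cY$, while the paper centres it at $c$ and works in $B_{3r}(c)$; either choice carries the configuration $\cY\cup\{c,c'\}$ into a single Euclidean ball and the rest is identical. Two small slips worth tidying: $c\in\cU(\cP,r)$ gives $\rho_{_\cP}(c)\le r$, not strict inequality (harmless here since $3r<r_{\conv}$ is strict), and in the uniqueness step $\Delta(\phi(\cY))$ denotes the open simplex, not the affine span---but your intended argument (both points lie in the simplex, hence in the $k$-dimensional affine hull, where the circumcentre is the unique equidistant point) is fine.
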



\begin{proof}

For every $x\in\cU(\cP,r)$ note that the values of $\rho_{_\cP}$ in the ball $B_r(x)$ are completely determined by points in the set $\cP\cap B_{3r}(x)$, i.e.~$\rho_{_\cP}|_{B_r(x)} = \rho_{_{\cP\cap B_{3r}(x)}}|_{B_r(x)}$.
If $r< r_{\conv}/3$ the ball $B_{3r}(x)$ can embedded isometrically in $\R^d$, which implies that $\rho_{_\cP}|_{B_r(x)}$ behaves identically to the distance function in $\R^d$ discussed in Section \ref{sec:morse_dist}. If $c\in \cU(\cP,r)$ is a critical point of $\rho_{_\cP}$, then using the isometry $B_{3r}(c)\to \R^d$, Definition \ref{def:crit_pts} applies to it as well.
In particular, following Lemma \ref{lem:gen_crit_point},  $c$ is generated by a subset $\cY\subset \cP$ of $k+1$ points, such that $\rho_{_\cP}(c) = \rho_{_\cY}(c) \le r$.  

Suppose that there exists another critical point $c' \in \cU(\cP,r)$ generated by $\cY$. Since $\rho_{_\cP}(c') = \rho_{_\cY}(c') \le r$ as well, we must have $c'\in B_{2r}(c)\subset B_{3r}(c)$. However, since in $\R^d$ the set $\cY$ generates a unique critical point, the same should hold for the embedding $B_{3r}(c)\to \R^d$, and therefore $c=c'$.

\end{proof}


{\bf Remark:} Using the uniqueness of critical points in $\cU(\cP,r)$ ($r< r_{\conv}/3$),  we can define $S(\cY)$, $C(\cY)$, $R(\cY)$, $B(\cY)$,  and $\Delta(\cY)$ similarly to \eqref{eq:def_S}-\eqref{eq:def_Delta} using the local isometry described above. 

To conclude, while the distance function on the torus $\T^d$  introduces some features that do not occur in $\R^d$ (e.g.~singular points, and non-uniqueness of critical points), if $r<r_{\conv}/3$ then in $\cU(\cP,r)\subset\T^d$ the critical points of the distance function behave the same was as in $\R^d$. Consequently, we define
\begin{equation}\label{eq:r_max}
r_{\max}:= r_{\conv}/3,
\end{equation}
which will be useful for us later.
As mention earlier, the vanishing threshold we look for is much smaller than $r_{\conv}$ (and $r_{\conv}/3$). Therefore, we will be able to analyze the distance function on the torus similarly to the Euclidean space.


\section{Definitions and notation}


\subsection{The Poisson process}\label{sec:poisson}


In this paper we study the homology of random \cech complex constructed from a random set of points. The points are generated by a homogeneous Poisson process which can be defined as follows. Let $(X, \mu)$ be a compact metric-measure space and let $X_1,X_2,\ldots$ be a sequence of $\iid$ random variables uniformly distributed in $X$. Let $N\sim\pois{n}$ be a Poisson random variable, independent of the $X_i$-s. Then we define
\begin{equation}\label{eq:def_pois}
\cP_n = \set{X_1,\ldots, X_N}.	
\end{equation}
This definition is equivalent to saying that $\cP_n$ is a homogeneous Poisson process with intensity $n$. In particular then the following holds:
\begin{enumerate}
\item For every compact measurable set $A$: $\abs{\cP_n\cap A}\sim \pois{n\mu(A)}$
\item If $A$ and $B$ are disjoint, then the variables ${\cP_n\cap A}$ and ${\cP_n\cap B}$ are independent\\ (this property is usually referred to as `spatial independence').
\end{enumerate}

We will be interested in asymptotic behavior as $n\to \infty$.
For any event $A$ that depends on $\cP_n$, we say that $A$ occurs \emph{with high probability} (w.h.p.) if $\limninf\prob{A} = 1$.


\subsection{Some notation}


The following objects will be used repeatedly in the paper:
\begin{itemize}
\item The union of balls - $\cU(n,r) := \cU(\cP_n,r)$,
\item The \cech complex - $\cC(n,r) := \cC(\cP_n,r)$,
\item The distance function - $\rho_n(\cdot) := \rho_{_{\cP_n}}(\cdot)$.
\end{itemize}

Throughout the paper, $k$ will mostly be a fixed positive integer value referring to either an index of a critical point or degree of homology. For a fixed $k$, we will use $\cY$ to  represent a set of $k+1$ random variables. When evaluating probabilities or moments using integrals will use the following notation -
\begin{itemize}
\item $x\in \T^d$ - a single variable,
\item $\bx = (x_0,\ldots, x_k) \in (\T^d)^{k+1}$ - a $(k+1)$-tuple of points in $\T^d$,
\item $\by = (y_1,\ldots, y_k)\in (\R^d)^k$ - a $k$-tuple of points in $\R^d$,
\item If $f:(\T^d)^{k+1}\to \R$ then $f(\bx) := f(x_0,\ldots, x_k)$,
\item If $f:(\R^d)^{k+1}\to \R$ then $f(0,\by) := f(0,y_1,\ldots, y_k)$.
\end{itemize}

In random \cech complexes, similarly to random geometric graphs, the term that controls much of the limiting behavior of the complex is
\begin{equation}\label{eq:def_lambda}
	\Lambda := \omega_d nr^d,
\end{equation}
where $\omega_d$ is the volume of a $d$-dimensional Euclidean unit ball.
The value $\Lambda$ is the expected number of points in a ball of radius $r$. This value is directly related to the expected vertex degree (number of neighbors) in the random \cech complex $\cC(n,r)$. The higher $\Lambda$ is, the denser the complex is. The vanishing thresholds we study in this papers are values of $\Lambda$ that guarantee the vanishing of homology.

Finally, we will use the following  asymptotic notation.
\begin{itemize}
\item $a(n)\approx b(n) \ \ \Rightarrow\ \ \limninf a(n)/b(n) = 1$, 
\item $a(n)\sim b(n)\ \ \Rightarrow \ \ \limninf a(n)/b(n) \in (0,\infty)$,  (i.e.~$a(n) = \Theta(b(n))$) ,
\item $a(n) \ll b(n)\ \ \Rightarrow\ \ \limninf a(n)/b(n) = 0$ (i.e.~$a(n) = o(b(n))$).
\end{itemize}


\section{Main results}\label{sec:main_results}


Let $\cP_n$ be the homogeneous Poisson process in $\T^d$ with intensity $n$, and $\cC(n,r)$ be the corresponding \cech complex. If $r$ is large enough we expect the union of balls $\cU(n,r)$ to cover the torus and then, by the Nerve Lemma \ref{lem:nerve}  we have $H_k(\cC(n,r)) \cong H_k(\T^d)$ for every $0\le k \le d$. 
Translating the results in  \cite{flatto_random_1977} to our case (see Corollary \ref{cor:coverage}) we have that the threshold value for coverage is 
\[
	\Lambda_c := \log n +(d-1)\log\log n.
\]
In addition, the analysis of connectivity in random geometric graphs (cf. \cite{penrose_random_2003}) yields that the threshold for connectivity of $\cC(n,r)$ is
\begin{equation}\label{eq:lambda_0}
	\Lambda_0 := \frac{\log n}{2^d}.
\end{equation}
In other words, $\Lambda_0$ is the threshold value above which we have $H_0(\cC(n,r)) \cong H_0(\T^d)$.
Analogously to other models of random simplicial complexes (e.g. \cite{kahle_sharp_2014,meshulam_homological_2009}), we expect that between $\Lambda_0$ and $\Lambda_c$ there would be an increasing sequence of threshold values 
\[
	\Lambda_0 < \Lambda_1 < \Lambda_2 < \ldots < \Lambda_{d} \le \Lambda_c,
\]
such that $\Lambda_k$ is the threshold value to have $H_k(\cC(n,r))\cong H_k(\T^d)$.
We refer to $\Lambda_k$ as the `vanishing threshold' for $H_k$, in the sense that above this value we see only the $k$-cycles that belong the torus, and all the other cycles that appeared in lower radii vanish.

We need to separate the top ($d$-dimensional) homology from the rest of the homology groups. Prior to coverage, since $\cU(n,r)$ is a ``nice" (compact and locally contractible) proper subset of $\T^d$ then necessarily $H_d(\cU(n,r)) = 0$ (cf. \cite{hatcher_algebraic_2002}). Once we cover the torus, we know that $\cU(n,r)= \T^d$, and therefore $H_d(\cU(n,r)) \cong H_d(\T^d)$.  Corollary \ref{cor:coverage} then immediately implies the following.


\begin{thm}\label{thm:top_homology}
Suppose that $w(n)\to \infty$ as $n\to\infty$.
\[
	\limninf\prob{H_d(\cC(n,r)) \cong H_d(\T^d)} = \begin{cases} 1 & \Lambda = \log n + (d-1)\log\log n + w(n) \\
	0 & \Lambda = \log n + (d-1)\log\log n - w(n). \end{cases}.
\]
\end{thm}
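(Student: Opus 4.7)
The plan is to deduce Theorem \ref{thm:top_homology} directly from the Flatto--Newman coverage statement recorded in Corollary \ref{cor:coverage}, the Nerve Lemma, and one standard topological fact. First I would note that in the regime $\Lambda = \log n + (d-1)\log\log n \pm w(n)$ we have $r = \Theta\bigl((\log n/n)^{1/d}\bigr) \to 0$, so for all $n$ large enough $r < r_{\max} = r_{\conv}/3$. In this range the intersections of the radius-$r$ balls on $\T^d$ are either empty or convex (in particular contractible), so the Nerve Lemma \ref{lem:nerve} applies and gives $\cC(n,r)\simeq \cU(n,r)$; in particular $H_d(\cC(n,r))\cong H_d(\cU(n,r))$.

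With this reduction in place, I would treat the two cases separately. If $\Lambda = \log n + (d-1)\log\log n + w(n)$, Corollary \ref{cor:coverage} gives $\cU(n,r) = \T^d$ with high probability, and therefore $H_d(\cC(n,r)) \cong H_d(\T^d)$. If $\Lambda = \log n + (d-1)\log\log n - w(n)$, the same corollary gives $\cU(n,r) \subsetneq \T^d$ with high probability. I would then invoke the elementary fact (recorded in the paragraph preceding the theorem statement) that a compact, locally contractible proper subset $U$ of a closed, connected, orientable $d$-manifold $M$ satisfies $H_d(U;\F) = 0$ (cf.\ \cite{hatcher_algebraic_2002}); applying this to $U = \cU(n,r)$ and $M = \T^d$ yields $H_d(\cC(n,r)) \cong H_d(\cU(n,r)) = 0$, which is not isomorphic to $H_d(\T^d) \cong \F$.

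Honestly there is no genuine obstacle here: the statement is essentially a bookkeeping consequence of Corollary \ref{cor:coverage} combined with the Nerve Lemma. The only point worth double-checking is that the Nerve Lemma is actually applicable at radius $r \asymp (\log n/n)^{1/d}$, which is exactly why Section \ref{sec:torus_cech} was careful to restrict attention to $r < r_{\conv}$; the estimate $r \to 0$ settles this for all sufficiently large $n$, and the event that $\cP_n$ is finite (needed to regard $\cU(n,r)$ as a finite union of geodesic balls, hence locally contractible) holds surely.
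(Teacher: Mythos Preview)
Your proposal is correct and follows exactly the paper's own reasoning: the paragraph immediately preceding Theorem~\ref{thm:top_homology} already records that before coverage $H_d(\cU(n,r))=0$ (since $\cU(n,r)$ is a compact, locally contractible proper subset of $\T^d$) and after coverage $\cU(n,r)=\T^d$, so Corollary~\ref{cor:coverage} yields the result immediately. You have simply spelled out the one point the paper leaves implicit, namely that $r\to 0$ ensures $r<r_{\max}$ so the Nerve Lemma applies.
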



In other words, $\Lambda_d \approx \log n$, while $\Lambda_0 \approx 2^{-d}\log n$. 
Comparing $\Lambda_d$ and $\Lambda_0$ \eqref{eq:lambda_0} a reasonable conjecture would be that the other vanishing thresholds are of the form $\Lambda_k \approx c_k \log n$ for some $c_k \in (2^{-d},1)$.
The main theorems of this paper provide bounds for $\Lambda_k$ ($k=1,\ldots,d-1$) and show that this conjecture is actually wrong and in fact $\Lambda_k \approx \log n$ for all $1\le k\le d$.

To analyze the vanishing of the $k$-th homology ($1\le k \le d-1$), we start by bounding the expected value of $\beta_k(r)$ - the $k$-th Betti number of $\cC(n,r)$ (i.e.~the number of non-trivial $k$-cycles).


\begin{prop}\label{prop:betti_order}
If $r\to 0$ and $\Lambda\to\infty$, then for every $1\le k\le  d-1$ there exist constants $a_{k},b_{k}>0$ such that
\[
a_{k} n\Lambda^{k-2}e^{-\Lambda} \le	\meanx{\beta_k(r)} \le \beta_k(\T^d) + b_{k}{n\Lambda^k e^{-\Lambda}}.
\]
\end{prop}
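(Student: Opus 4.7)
My plan is to bound $\beta_k(\cU(n,r))$ using Morse theory for $\rho_n$, by extending the filtration $\{\cU(n,r')\}$ beyond $r$ up to the maximum $r^*$ of $\rho_n$ (at which $\cU(n,r^*)=\T^d$). Along the filtration, $\beta_k$ changes only at critical points: an index-$k$ critical point may increase $\beta_k$ by one (a birth), and an index-$(k+1)$ critical point may decrease it by one (a death). Letting $B,D$ denote the number of births and deaths of $k$-cycles in $(r,r^*]$, we obtain $\beta_k(\T^d)=\beta_k(\cU(n,r))+B-D$, hence
$$\beta_k(\cU(n,r))\le \beta_k(\T^d)+D\le \beta_k(\T^d)+\tilde N_{k+1}(r),$$
where $\tilde N_{k+1}(r)$ is the total number of index-$(k+1)$ critical points of $\rho_n$ with value greater than $r$. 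By the Nerve Lemma (valid for $r<r_{\mathrm{conv}}$), the same inequality holds for $\beta_k(\cC(n,r))$.

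\textbf{Computing the expectation.} It remains to bound $\mean{\tilde N_{k+1}(r)}$. By Mecke's formula for the Poisson process together with Lemma \ref{lem:gen_crit_point},
$$\mean{\tilde N_{k+1}(r)}=\frac{n^{k+2}}{(k+2)!}\int_{(\T^d)^{k+2}}\mathbf{1}\{C(\bx)\in\Delta(\bx),\,R(\bx)>r\}\,e^{-n\omega_d R(\bx)^d}\,d\bx.$$
Using translation invariance to fix $x_0=0$ and switching to shape--scale coordinates $(x_1,\ldots,x_{k+1})\mapsto(R,\by)$ (with $R(0,\by)=1$, Jacobian $R^{d(k+1)-1}$), the integrand factorizes into a constant shape integral and the radial integral $\int_r^{\infty}R^{d(k+1)-1}e^{-n\omega_d R^d}\,dR$. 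The substitution $u=n\omega_d R^d$ turns this into an upper incomplete Gamma function $\Gamma(k+1,\Lambda)$, which is asymptotic to $\Lambda^k e^{-\Lambda}$ as $\Lambda\to\infty$. Collecting powers of $n$ then leaves exactly an $O(n\Lambda^k e^{-\Lambda})$ contribution, establishing the upper bound.

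\textbf{Lower bound.} For the lower bound I would invoke the $\Theta$-cycle construction of Section \ref{sec:theta_cycles}: specific point configurations in $\cP_n$ (combined with an empty-ball condition) engineered so that the associated critical point is guaranteed to be a birth of a non-trivial $k$-cycle rather than a death of a $(k-1)$-cycle. The steps are: (i) for each qualifying $\Theta$-configuration $\cY\subset\cP_n$, require an additional isolating empty region around $C(\cY)$ so that the resulting cycle lives in an essentially disjoint local piece of $\cU(n,r)$; (ii) apply the Nerve Lemma and spatial independence of the Poisson process (equivalently, Mayer--Vietoris on disjoint pieces) to conclude that distinct isolated $\Theta$-cycles contribute linearly independent classes in $H_k$, whence $\beta_k(\cU(n,r))\ge T_k$ with $T_k$ the number of such configurations; (iii) apply Mecke's formula to evaluate $\mean{T_k}$, where the defining geometric and emptiness constraints (after an analogous shape--scale change of variables) combine to produce a rate of order $n\Lambda^{k-2}e^{-\Lambda}$.

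\textbf{Main obstacle.} The more delicate direction is the lower bound, specifically verifying the two topological claims of step (ii): that each $\Theta$-configuration really produces a non-trivial homology class in $\cU(n,r)$, and that classes from distinct isolated $\Theta$-configurations are linearly independent in $H_k$. Both rest on a careful local-to-global argument combining the Nerve Lemma with the spatial independence of the Poisson process, so as to isolate the topological contribution of each $\Theta$-cycle to its own neighborhood. The upper bound, by contrast, is an essentially routine integral estimate once the correct Morse inequality is identified.
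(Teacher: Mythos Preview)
Your Morse inequality and the incomplete-Gamma computation are exactly the paper's argument, but you skip one genuine step. On $\T^d$ the min-type Morse theory for $\rho_n$---in particular the characterization of critical points in Lemma~\ref{lem:gen_crit_point} and the identity $\vol(B(\bx))=\omega_d R(\bx)^d$---is only valid for critical values below $r_{\max}=r_{\conv}/3$ (see Section~\ref{sec:torus_dist_fn}). You cannot simply run the filtration to the random coverage radius $r^*$ and integrate $R$ from $r$ to $\infty$; when $r^*>r_{\max}$ your Mecke integral does not describe the critical points. The paper fixes this by conditioning on the event $E=\{\T^d\subset\cU(n,r_{\max})\}$: on $E$ one has $r^*\le r_{\max}$ and your bound $\hat\beta_k(r)\le \hat C_{k+1}(r)=C_{k+1}(r_{\max})-C_{k+1}(r)$ holds; on $E^c$ one bounds $\beta_k$ crudely by $\binom{|\cP_n|}{k+1}$ and uses $\prob{E^c}=O(e^{-cn})$ to show this contributes $o(n\Lambda^k e^{-\Lambda})$.

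\textbf{Lower bound.} The plan (count $\Theta$-cycles, show $\beta_k(r)\ge\beta_k^\eps(r)$, compute $\mean{\beta_k^\eps(r)}$) is the paper's, but your mechanism in step~(ii) is not, and as stated it cannot work. A $\Theta$-cycle \emph{requires} the annulus $A_\eps(\cY)$ to be covered by $\cU(\cP_n,R(\cY))$---the cycle arises precisely because the surrounding sphere is filled in while the centre is not. So a $\Theta$-cycle never lives in a ``disjoint local piece'' of $\cU(n,r)$, and a Mayer--Vietoris/spatial-independence argument on separated components is unavailable. The paper's device is different and purely simplicial: at the critical value the new $k$-simplex $\Delta(\cY)$ is \emph{isolated} in $\cC(n,R(\cY))$ (not a face of any $(k{+}1)$-simplex, since $B(\cY)\cap\cP_n=\emptyset$), and the extra condition $B_{r''}(C(\cY))\cap\cP_n=\cY$ ensures $\Delta(\cY)$ remains isolated up to radius $r$. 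An isolated simplex cannot occur in any $\partial_{k+1}$, so any cycle containing it with nonzero coefficient is nontrivial; ordering the $\Theta$-cycles by critical value then gives a triangular system and hence linear independence in $H_k(\cC(n,r))$. This is the content of Lemmas~\ref{lem:theta_cycle} and~\ref{lem:theta_cycle_bound}. Your ``isolating empty region'' should therefore isolate the \emph{simplex} in the complex, not the cycle in the space.
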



Consequently, we have the following phase transition.

\begin{cor}\label{cor:betti_bounds}

Suppose that $w(n)\to \infty$ as $n\to\infty$ and let $1\le k \le d-1$.
Then
\[
	\limninf\meanx{\hat\beta_k(r)} =\begin{cases} \beta_k(\T^d) & \Lambda = \log n + k\log\log n + w(n) \\
	\infty & \Lambda = \log n + (k-2)\log\log n - w(n).
	\end{cases}
\]
\end{cor}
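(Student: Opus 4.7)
The plan is to derive the corollary directly from Proposition \ref{prop:betti_order} by substituting the two choices of $\Lambda$ and verifying the limits. First I would check that the hypotheses $\Lambda \to \infty$ and $r \to 0$ of the proposition are satisfied in both regimes: since $w(n) = o(\log n)$ is required only in the lower regime (so that $\Lambda \to \infty$ there), while in the upper regime $\Lambda \ge \log n \to \infty$ unconditionally, and in both cases $r = (\Lambda/\omega_d n)^{1/d} \to 0$.

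For the upper (vanishing) regime, I would plug $\Lambda = \log n + k\log\log n + w(n)$ into the bound $\mean{\beta_k(r)} \le \beta_k(\T^d) + b_k n\Lambda^k e^{-\Lambda}$. The key identity is
\[
n e^{-\Lambda} \;=\; \frac{1}{(\log n)^k\, e^{w(n)}},
\qquad\text{so}\qquad
n\Lambda^k e^{-\Lambda} \;=\; \frac{\Lambda^k}{(\log n)^k\, e^{w(n)}}.
\]
I would split into two subcases to show this tends to $0$: if $w(n) \le \log n$, then $\Lambda \le 3\log n$ eventually, so $\Lambda^k/(\log n)^k$ is bounded and $e^{-w(n)}\to 0$; if instead $w(n) > \log n$, then $\Lambda \le 2 w(n)$, and $\Lambda^k e^{-w(n)} \le (2w(n))^k e^{-w(n)} \to 0$ by the standard fact that polynomials are dominated by exponentials. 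Either way, the correction term vanishes and the upper bound collapses to $\beta_k(\T^d)$; since the lower bound in the proposition is nonnegative, combining gives $\mean{\beta_k(r)} \to \beta_k(\T^d)$.

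For the lower (divergence) regime, I would use $\mean{\beta_k(r)} \ge a_k n\Lambda^{k-2} e^{-\Lambda}$ with $\Lambda = \log n + (k-2)\log\log n - w(n)$. Here $w(n) \to \infty$ with $w(n) = o(\log n)$, so $\Lambda \approx \log n$ and
\[
n\Lambda^{k-2} e^{-\Lambda} \;=\; \frac{\Lambda^{k-2}\, e^{w(n)}}{(\log n)^{k-2}} \;\sim\; e^{w(n)} \;\longrightarrow\; \infty.
\]
The cases $k=1$ (where $\Lambda^{k-2} = \Lambda^{-1} \sim (\log n)^{-1}$) and $k=2$ (where $\Lambda^{k-2}=1$) work identically, as the ratio $\Lambda^{k-2}/(\log n)^{k-2}$ tends to $1$ in all cases.

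No real obstacle is anticipated — this is a routine two-sided limit computation once Proposition \ref{prop:betti_order} is in hand. The only mildly subtle point is allowing $w(n)$ to grow arbitrarily fast in the upper bound regime, which forces the two-case splitting above; all the serious work is concentrated in establishing the moment bounds of the proposition itself.
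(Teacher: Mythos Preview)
Your approach is the same as the paper's: substitute each regime of $\Lambda$ into the bounds from Proposition~\ref{prop:betti_order} and evaluate the limits. You are in fact more explicit than the paper about verifying the hypotheses ($\Lambda\to\infty$, $r\to 0$) and about handling arbitrary growth of $w(n)$ in the upper regime via the case split; the paper's proof is a two-line sketch that simply asserts the limits.

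One step does not work as written. In the upper regime you conclude $\mean{\beta_k(r)}\to\beta_k(\T^d)$ by saying ``since the lower bound in the proposition is nonnegative, combining gives\ldots''. But Proposition~\ref{prop:betti_order} only supplies $\mean{\beta_k(r)}\ge a_k n\Lambda^{k-2}e^{-\Lambda}$, and in this regime that quantity tends to $0$, not to $\beta_k(\T^d)$. So from the proposition alone you obtain $\limsup \mean{\beta_k(r)}\le\beta_k(\T^d)$ but no matching $\liminf\ge\beta_k(\T^d)$; nonnegativity of a Betti number does not bridge that gap. The paper's own proof of the corollary is equally silent on this direction (and the statement itself carries a $\hat\beta_k$ versus $\beta_k$ inconsistency), so this is an inherited gap rather than one you introduced. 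Filling it would require knowing that the intrinsic $k$-cycles of $\T^d$ are already present in $\cC(n,r)$ at this scale---information that emerges later from the Morse-theoretic argument behind Theorem~\ref{thm:homology_bounds} but is not available from Proposition~\ref{prop:betti_order} alone. Your observation that $w(n)=o(\log n)$ is implicitly required in the lower regime (to keep $\Lambda\to\infty$) is correct and is another point the paper leaves unsaid.
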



Finally, we would like to turn the statement about the expectation into a statement about the vanishing probability. This results in the following theorem.


\begin{thm}\label{thm:homology_bounds}
Suppose that $w(n)\to \infty$ as $n\to\infty$ and let $1\le k \le d-1$.
\[
\limninf\prob{H_k(\cC(n,r)\cong H_k(\T^d)} = \begin{cases} 1 & \Lambda = \log n + k\log\log n + w(n) \\
0 & \Lambda = \log n + (k-2)\log\log n - w(n),
\end{cases}
\]
where for the lower threshold we require further that $w(n)\gg \log\log \log n$.
\end{thm}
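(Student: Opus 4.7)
The plan is to lift the first-moment estimates from Proposition~\ref{prop:betti_order} into statements about the probability of the isomorphism $H_k(\cC(n,r))\cong H_k(\T^d)$. The two regimes are handled by qualitatively different tools: the upper threshold is closed by Markov's inequality combined with a geometric surjectivity argument, while the lower threshold requires a second moment method applied to the $\Theta$-cycles introduced in Section~\ref{sec:theta_cycles}.

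\medskip
\textbf{Upper bound.} When $\Lambda=\log n+k\log\log n+w(n)$ a direct computation gives $n\Lambda^{k}e^{-\Lambda}\asymp e^{-w(n)}\to 0$, so Proposition~\ref{prop:betti_order} yields $\mean{\beta_k(r)}\to\beta_k(\T^d)$. To pass from this to an isomorphism, I would first show that in this regime the inclusion $\iota\colon\cU(n,r)\hookrightarrow\T^d$ induces a surjection on $H_k$. The natural route is a coverage-style lemma: fix in advance a triangulation of $\T^d$ of mesh much smaller than $r$ and show that w.h.p.\ every cell of its $k$-skeleton lies inside $\cU(n,r)$, whence each generator of $H_k(\T^d)$ is hit by $\iota_\ast$. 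Given surjectivity, $\beta_k(r)\ge\beta_k(\T^d)$ deterministically on the relevant event, and by the Nerve Lemma~\ref{lem:nerve} together with Markov's inequality
\[
\prob{H_k(\cC(n,r))\not\cong H_k(\T^d)}\le\prob{\beta_k(r)>\beta_k(\T^d)}\le\mean{\beta_k(r)}-\beta_k(\T^d)\longrightarrow 0.
\]

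\medskip
\textbf{Lower bound.} When $\Lambda=\log n+(k-2)\log\log n-w(n)$ one checks $n\Lambda^{k-2}e^{-\Lambda}\asymp e^{w(n)}\to\infty$, but a diverging expectation of extra cycles does not by itself preclude the excess homology from sitting on a vanishingly small event. My plan is to count $\Theta$-cycles, which by construction each contribute an independent non-trivial class to $H_k(\cC(n,r))$ beyond those of $\T^d$. Let $N_k$ be the number of index-$k$ $\Theta$-cycles supported on $\cP_n$. Then (i) a Mecke/Palm calculation along the lines of the lower bound in Proposition~\ref{prop:betti_order} gives $\mean{N_k}\gtrsim n\Lambda^{k-2}e^{-\Lambda}\to\infty$; (ii) the factorial moment $\mean{N_k(N_k-1)}$ decomposes according to the number $j$ of points shared by the two supporting $(k+1)$-tuples, with the $j=0$ term asymptotic to $\mean{N_k}^{2}$; (iii) bounding the $j\ge 1$ terms via the geometric/emptiness estimates shows $\var{N_k}=o(\mean{N_k}^{2})$, so Chebyshev gives $\prob{N_k\ge 1}\to 1$ and hence $\beta_k(r)>\beta_k(\T^d)$ with high probability.

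\medskip
\textbf{Main obstacle.} The delicate step is the second-moment analysis in~(iii). Because $\mean{N_k}$ grows only like $e^{w(n)}$, the budget for overlap terms is extremely tight: two $\Theta$-configurations sharing vertices produce two nearly-coincident empty balls whose union has volume only slightly larger than one ball, so the loss of spatial independence almost cancels the gain from the $w(n)$ factor. This is precisely where the hypothesis $w(n)\gg\log\log\log n$ must enter—it provides the margin so that, after summing over $j$ and applying the coverage estimates of Appendix~\ref{sec:appendix_coverage}, each overlap contribution is $o(\mean{N_k}^{2})$. Pinning down the sharp form of these emptiness/overlap estimates, rather than the cruder bounds that suffice for a first-moment calculation, is the principal technical hurdle of the proof.
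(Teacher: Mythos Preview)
Your lower-bound plan coincides with the paper's: Lemma~\ref{lem:theta_cycle_vanish} is proved exactly by the second-moment/Chebyshev argument on $\beta_k^\eps(r)$ that you describe, with the $j\ge 1$ overlap terms controlled via the ball-intersection volume estimate \eqref{eq:vol_intersection} (not the coverage appendix), and the choice $\alpha\sim\log\log n/\log n$ in that estimate is precisely where the hypothesis $w(n)\gg\log\log\log n$ is spent.

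Your upper-bound route is genuinely different from the paper's. The paper does \emph{not} pass through Proposition~\ref{prop:betti_order} and Markov on $\hat\beta_k(r)$; instead it works directly with critical points: Corollary~\ref{cor:vanish_C_k} gives $\meanx{\hat C_k(r)},\meanx{\hat C_{k+1}(r)}\to 0$ at $\Lambda=\log n+k\log\log n+w(n)$, so by Markov both counts vanish w.h.p., Morse theory then freezes $H_k$ on $(r,r_{\max}]$, and coverage at the fixed radius $r_{\max}$ (Corollary~\ref{cor:coverage}) identifies $H_k(\cC(n,r_{\max}))$ with $H_k(\T^d)$. This bypasses any need for a separate surjectivity lemma. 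Your approach can be made to work, but two points need attention: (i) you cannot ``fix in advance'' a triangulation of mesh $\ll r$, since $r\to 0$; you must take an $n$-dependent $\eps r$-net of each $k$-dimensional coordinate subtorus and run a union bound over $\sim(\eps r)^{-k}$ points, which succeeds for $k\le d-1$ but is a lemma the paper does not supply; (ii) the displayed Markov step is not literally valid because $\beta_k(r)-\beta_k(\T^d)$ is not a priori nonnegative---you need to condition on your surjectivity event first and absorb the complementary event into the $o(1)$. The paper's critical-point argument is cleaner precisely because it delivers the isomorphism directly from Morse theory rather than reconstructing it from a Betti-number equality plus a map-level statement.
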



Theorem \ref{thm:homology_bounds} suggests  that the vanishing threshold for the $k$-th homology satisfies $$\Lambda_k \in [\log n + (k-2)\log\log n, \ \log n + k\log\log n].$$ 
In particular we have the following straightforward corollary.


\begin{cor}\label{cor:vanish_homology}
For any $1\le k \le d-1$, and for any $\eps \in(0,1)$, 
\[
\limninf\prob{H_k(\cC(n,r)\cong H_k(\T^d)} = \begin{cases} 1 & \Lambda = (1+\eps)\log n \\
0 & \Lambda = (1-\eps)\log n.
\end{cases}
\]

\end{cor}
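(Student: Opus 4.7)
The plan is to deduce Corollary \ref{cor:vanish_homology} directly from Theorem \ref{thm:homology_bounds} by a simple asymptotic comparison of the two parameterizations of $\Lambda$. There is no new probabilistic content to establish; the corollary merely coarsens the finer additive $\log\log n$ scale of Theorem \ref{thm:homology_bounds} into the multiplicative $(1\pm\eps)\log n$ scale. So the entire argument reduces to producing, in each regime, a choice of the auxiliary sequence $w(n)$ appearing in Theorem \ref{thm:homology_bounds} that is compatible with the coarser hypothesis.

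For the upper regime, suppose $\Lambda = (1+\eps)\log n$. Set
\[
w(n) := \eps \log n - k\log\log n,
\]
so that $\Lambda = \log n + k\log\log n + w(n)$. Since $\eps > 0$ is fixed and $k$ is a fixed integer, $w(n) \to \infty$ as $n \to \infty$. Theorem \ref{thm:homology_bounds} then immediately yields
\[
\limninf\prob{H_k(\cC(n,r)) \cong H_k(\T^d)} = 1.
\]

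For the lower regime, suppose $\Lambda = (1-\eps)\log n$. Set
\[
w(n) := \eps\log n - (k-2)\log\log n,
\]
so that $\Lambda = \log n + (k-2)\log\log n - w(n)$. Regardless of the sign of $k-2$ (recall $k \ge 1$ so $k-2 \ge -1$), the dominant term in $w(n)$ is $\eps \log n$, so $w(n) \to \infty$ and moreover $w(n) \sim \eps\log n \gg \log\log\log n$. Both requirements of the lower branch of Theorem \ref{thm:homology_bounds} are therefore satisfied, and the conclusion
\[
\limninf\prob{H_k(\cC(n,r)) \cong H_k(\T^d)} = 0
\]
follows.

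There is really no main obstacle: the only thing to verify is that the gap between $(1\pm\eps)\log n$ and $\log n$ is of order $\log n$, which overwhelms any $\log\log n$ or $\log\log\log n$ corrections; hence both branches of Theorem \ref{thm:homology_bounds} apply with ample room to spare. If anything, the remark worth making is that this coarse corollary loses essentially all of the fine-scale information in Theorem \ref{thm:homology_bounds}, which localizes the vanishing threshold $\Lambda_k$ to within an additive window of width $2\log\log n$ around $\log n$.
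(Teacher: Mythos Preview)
Your approach is correct and is exactly what the paper intends: it states the result as a ``straightforward corollary'' of Theorem~\ref{thm:homology_bounds} and gives no further argument, so deducing it by absorbing the $\log\log n$ terms into $w(n)$ is precisely the point.

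There is, however, a sign slip in your lower-regime computation. With $w(n) := \eps\log n - (k-2)\log\log n$ one gets
\[
\log n + (k-2)\log\log n - w(n) = (1-\eps)\log n + 2(k-2)\log\log n,
\]
which is not $(1-\eps)\log n$. The correct choice is $w(n) := \eps\log n + (k-2)\log\log n$, and then indeed $\Lambda = \log n + (k-2)\log\log n - w(n)$. Your subsequent verification that $w(n)\to\infty$ and $w(n)\gg\log\log\log n$ is unaffected, since the dominant term $\eps\log n$ remains the same. With this correction the proof is complete.
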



This indicates that the conjecture made above about the vanishing thresholds is wrong. Except for $H_0$, all other homology groups $H_k$ vanish at the same scale $\Lambda \approx \log n$ (i.e. all the $c_k,\ k\ge 1$ coincide), and  the differences in the threshold values show up as a second order $\log \log n$ term. This result is highly counterintuitive, and we consider it one of the main contributions of this paper. In Section \ref{sec:conclusion} we will provide an intuitive explanation for this phenomenon.


We provide the proofs for the theorems above in Section \ref{sec:proofs}. The arguments required to prove the upper and lower bounds are different, but both rely heavily on Morse theory for the distance function $\rho_n$. 
In the following sections we provide the framework and Morse theoretic ideas that are needed to prove each of the bounds.


\section{Upper bound - Critical points}\label{sec:upper_bound}


The main idea in proving the upper bound in Proposition \ref{prop:betti_order} and Theorem  \ref{thm:homology_bounds} is the following. By Morse theory, changes to $H_k$ occur only at critical points of index $k$ and $k+1$.
We will show that if $\Lambda=\omega_dnr^d = \log n + k\log\log n + w(n)$ then there are no such critical points with value greater than $r$, and therefore the $H_k(\cC(n,r)) \cong H_k(\cC(n,r_{\max}))$. On the other hand, with high probability we have that $\cU(n,r_{\max})$ covers the torus, and therefore $H_k(\cC(n,r))\cong H_k(\T^d)$ (cf. Corollary \ref{cor:coverage}). In this section we provide the definitions and statements needed to make this argument rigorous.

The main object that will help us analyze the behavior of homology later is the following quantity - 
\begin{equation}\label{eq:C_k_r}
	C_k(r) := \# \set{\textrm{critical points $c\in \T^d$ of index $k$, with } \rho_n(c) \le r},
 \end{equation}
 i.e.\! the number of critical points of index $k$ with value in $[0,r]$.


\subsection{Expected value}


The following proposition provides the expected value of $C_k(r)$, as a function of $\Lambda$. Note that as opposed to most of the results related to random geometric graphs and complexes, the following statement is true for every $n$ and is \emph{not} asymptotic.


\begin{prop}\label{prop:mean_C_k_r} For $1\le k \le d$, and for every $r \le r_{\max}$  we have , 
\[
	\mean{C_k(r)} = D_k n \param{1-e^{-\Lambda} \sum_{j=0}^{k-1} \frac{\Lambda^j}{j!}},
\]
where $D_k$ is a constant that depends on $k$ and $d$ and is given in \eqref{eq:D_k},  $\Lambda$ is defined in \eqref{eq:def_lambda}, and $r_{\max}$ is defined in \eqref{eq:r_max}.
\end{prop}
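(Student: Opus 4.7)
The plan is to use the explicit characterization from Lemma \ref{lem:gen_crit_point}: a subset $\cY\subset\cP_n$ of size $k+1$ generates an index-$k$ critical point $c = C(\cY)$ with value $R(\cY)\le r$ if and only if (i) $C(\cY)\in\Delta(\cY)$, (ii) $\cP_n\cap B(\cY)=\emptyset$, and (iii) $R(\cY)\le r$. Since $r\le r_{\max}=r_{\conv}/3$, the uniqueness lemma just above \eqref{eq:r_max} guarantees a bijective correspondence between such subsets and critical points in $\cU(n,r)$, and $B(\cY)$ embeds isometrically into $\R^d$ (so $\vol(B(\cY))=\omega_d R(\cY)^d$). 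Writing $h_k(\bx)=\ind\{C(\bx)\in\Delta(\bx)\}$, the count becomes
\[
C_k(r)=\sum_{\cY\subset\cP_n,\,|\cY|=k+1} h_k(\cY)\ind\{R(\cY)\le r\}\ind\{\cP_n\cap B(\cY)=\emptyset\}.
\]

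The first key step is to apply the Slivnyak--Mecke (Palm) formula for the Poisson process $\cP_n$. The independence between $\cY$ and $\cP_n\setminus\cY$ built into that formula lets me decouple conditions (i)--(iii) from condition on the voided ball, which produces the Poisson void probability $e^{-n\omega_d R(\bx)^d}$. This gives
\[
\mean{C_k(r)}=\frac{n^{k+1}}{(k+1)!}\int_{(\T^d)^{k+1}} h_k(\bx)\ind\{R(\bx)\le r\}\,e^{-n\omega_d R(\bx)^d}\,d\bx.
\]

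Next I reduce to Euclidean space. By translation invariance of the torus (and since $R(\bx)\le r\le r_{\max}$ places all $k+1$ points inside a ball that is isometric to a Euclidean ball), I fix $x_0$, integrate it out using $\vol(\T^d)=1$, and substitute $y_i=x_i-x_0$ for $i=1,\dots,k$. This yields
\[
\mean{C_k(r)}=\frac{n^{k+1}}{(k+1)!}\int_{(\R^d)^{k}} h_k(0,\by)\ind\{R(0,\by)\le r\}\,e^{-n\omega_d R(0,\by)^d}\,d\by.
\]

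The final step exploits the scaling symmetry of the Euclidean integrand. Write $\by=s\bv$ where $s=R(0,\by)$ and $\bv$ lies in the $(kd{-}1)$-dimensional slice $\{R(0,\bv)=1\}$; since $h_k$ and the constraint $C\in\Delta$ are scale-invariant, the Jacobian produces $d\by=s^{kd-1}\,ds\,d\sigma(\bv)$, and the integral factorizes as
\[
\left(\int_{\{R=1\}} h_k(0,\bv)\,d\sigma(\bv)\right)\cdot \int_0^r s^{kd-1} e^{-n\omega_d s^d}\,ds.
\]
The substitution $u=n\omega_d s^d$ turns the radial integral into $\frac{1}{d(n\omega_d)^k}\int_0^\Lambda u^{k-1}e^{-u}\,du$, and the classical incomplete-gamma identity $\int_0^\Lambda u^{k-1}e^{-u}\,du=(k-1)!\bigl(1-e^{-\Lambda}\sum_{j=0}^{k-1}\Lambda^j/j!\bigr)$ converts this into the stated form, with
\[
D_k=\frac{(k-1)!}{d\,(k+1)!\,\omega_d^{k}}\int_{\{R=1\}} h_k(0,\bv)\,d\sigma(\bv).
\]

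The main obstacles are technical rather than conceptual: carefully verifying the Slivnyak--Mecke application with the \emph{exact} indicator (including the simplex condition and the void ball, which interact geometrically), checking that $R(\bx)\le r\le r_{\max}$ ensures the Euclidean reduction is exact (no wraparound or multiple geodesic issues, thanks to the uniqueness lemma), and writing down the shape measure $d\sigma$ on $\{R=1\}$ cleanly enough to identify $D_k$ as a well-defined finite constant depending only on $k$ and $d$. Once these are in place, the identity is \emph{exact} for every $n$ (not asymptotic), as advertised.
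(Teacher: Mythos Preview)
Your proposal is correct and follows essentially the same route as the paper: Palm/Slivnyak--Mecke to produce the integral with the Poisson void probability $e^{-n\omega_d R^d}$, translation invariance plus the local isometry to reduce to a Euclidean integral, a scaling decomposition exploiting the $1$-homogeneity of $R$ and the scale-invariance of $h_k$, and finally the substitution that yields the lower incomplete gamma function. The only cosmetic difference is in how the scaling step is executed: the paper first rescales by $r$ and then introduces an explicit Blaschke--Petkantschin-type parametrization $(\tau,\theta,\gamma,\bth)$ (radius, direction to the circumcenter, Grassmannian element for the affine $k$-plane, and angular positions on $S^{k-1}$), which gives a concrete integral formula for the constant $\tilde D_k$; you instead use an abstract polar decomposition $\by=s\bv$ with $\bv\in\{R=1\}$ and a shape measure $d\sigma$. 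Both produce the same radial integral $\int_0^\Lambda t^{k-1}e^{-t}\,dt$ and the same structure for $D_k$, so there is no substantive gap.
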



An immediate corollary of Proposition \ref{prop:mean_C_k_r} is related to the Euler characteristic of the \cech complex.
The Euler characteristic is an integer valued topological invariant that can be viewed as a ``summary" of the Betti numbers. There are various equivalent definitions for the Euler characteristic of a simplicial complex $\Sigma$. 
One of the definitions,  via Betti numbers, is
\begin{equation}\label{eq:ec_betti}
    \chi(\Sigma) = \sum_{k=0}^{\infty} (-1)^k \beta_k(\Sigma).
\end{equation}
In other words, the Euler characteristic is a (signed) sum of the number of cycles of all possible dimensions. In the special case where $\Sigma = \cC(n,r)$, defining
\[
	\chi(r) := \chi(\cC(n,r)),
\]
we can use Morse theory (see \cite{milnor_morse_1963}) to show that
\[
	\chi(r) = \sum_{k=0}^d (-1)^k C_k(r).
\] 
Thus, using Proposition \ref{prop:mean_C_k_r} we have the following.


\begin{cor}\label{cor:mean_ec}
If $r\le r_{\max}$ then
\[
\mean{\chi(r)} = n e^{-\Lambda} \param{1+\sum_{j=1}^{d-1} A_j \Lambda^j}.
\]
for some constants $A_j$ that depend on $j$ and $d$, defined in \eqref{eq:A_j} below.
\end{cor}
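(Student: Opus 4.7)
The plan is to combine the Morse-theoretic identity $\chi(r) = \sum_{k=0}^d (-1)^k C_k(r)$, noted just above the corollary, with the closed-form count of Proposition \ref{prop:mean_C_k_r}, and then rearrange the resulting double sum. First I would handle the $k=0$ term separately, since Proposition \ref{prop:mean_C_k_r} starts at $k=1$: every point of $\cP_n$ is a global minimum of $\rho_n$ at value $0$, so $C_0(r) = |\cP_n|$ for every $r \ge 0$, giving $\mean{C_0(r)} = n$. Adopting the convention $D_0 := 1$ and applying Proposition \ref{prop:mean_C_k_r} for $1 \le k \le d$, linearity of expectation yields
\[
\mean{\chi(r)} = n\sum_{k=0}^d (-1)^k D_k \;-\; n e^{-\Lambda} \sum_{k=1}^d (-1)^k D_k \sum_{j=0}^{k-1}\frac{\Lambda^j}{j!}.
\]

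Next I would swap the order of summation via $\sum_{k=1}^d\sum_{j=0}^{k-1} = \sum_{j=0}^{d-1}\sum_{k=j+1}^d$ to collect coefficients of $\Lambda^j$, obtaining
\[
\mean{\chi(r)} = n\sum_{k=0}^d (-1)^k D_k \;-\; n e^{-\Lambda}\sum_{j=0}^{d-1}\frac{\Lambda^j}{j!}\sum_{k=j+1}^d (-1)^k D_k.
\]
Defining $A_j := -\frac{1}{j!}\sum_{k=j+1}^d (-1)^k D_k$ for $1 \le j \le d-1$ already produces the $A_j \Lambda^j$ terms. The coefficient of $\Lambda^0$ inside the parenthesis reads $-\sum_{k=1}^d (-1)^k D_k$, which must equal $1$ in order to match the stated form; equivalently, the non-exponential prefactor $n\sum_{k=0}^d(-1)^k D_k$ must vanish simultaneously. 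Both requirements collapse to the single identity $\sum_{k=0}^d (-1)^k D_k = 0$.

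Establishing this identity is the main (and essentially only) obstacle. I would prove it by a topological consistency argument. Fix $r = r_{\max}$ and let $n\to\infty$, so that $\Lambda = \omega_d r_{\max}^d\, n$ grows linearly in $n$ and the exponential remainder $n e^{-\Lambda}\mathrm{poly}(\Lambda)$ tends to $0$ super-exponentially. By Corollary \ref{cor:coverage} the union $\cU(n,r_{\max})$ covers $\T^d$ with probability tending to $1$, so the Nerve Lemma gives $\cC(n,r_{\max}) \simeq \T^d$ and hence $\chi(r_{\max}) = \chi(\T^d) = 0$ with high probability; a domination bound on $|\chi(r_{\max})| \le \sum_k C_k(r_{\max})$, whose total expectation is $O(n)$ by Proposition \ref{prop:mean_C_k_r}, combined with a uniform-integrability check upgrades this to $\mean{\chi(r_{\max})}/n \to 0$, which forces the coefficient of $n$ in our formula to be zero. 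A likely cleaner alternative would be to verify $\sum_{k=0}^d (-1)^k D_k = 0$ directly from the combinatorial-integral formula for $D_k$ in \eqref{eq:D_k}, for instance by recognizing the alternating sum as the Euler characteristic of a standard $d$-simplex; either route closes the argument.
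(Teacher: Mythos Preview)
Your algebraic rearrangement is exactly the paper's: Morse identity, Proposition~\ref{prop:mean_C_k_r} for $k\ge 1$, $\mean{C_0(r)}=n$, swap the double sum, and read off $A_j=\frac{1}{j!}\sum_{k=j+1}^d(-1)^{k-1}D_k$. The only substantive step is the identity $\sum_{k=0}^d(-1)^kD_k=0$ (equivalently $A_0=1$), and here you and the paper diverge slightly.

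The paper does \emph{not} freeze $r=r_{\max}$; instead it lets $r\to 0$ with $\Lambda\to\infty$ and invokes Corollary~\ref{cor:betti_bounds} (really the upper bound of Proposition~\ref{prop:betti_order}) to get $\mean{\beta_k(r)}\to\beta_k(\T^d)$ and hence $\mean{\chi(r)}\to\chi(\T^d)=0$. The point is that the delicate ``no-coverage'' contribution has already been controlled inside the proof of Proposition~\ref{prop:betti_order}, so no extra integrability argument is needed here. Your route via coverage at $r_{\max}$ is correct in spirit, but the step ``$L^1$-bound $O(n)$ plus convergence in probability $\Rightarrow$ $\mean{\chi(r_{\max})}/n\to 0$'' is not automatic: $L^1$-boundedness alone does not give uniform integrability. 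You would need an additional input, e.g.\ the exponential decay of $\prob{\T^d\not\subset\cU(n,r_{\max})}$ from Corollary~\ref{cor:coverage} together with a crude polynomial bound $\sum_kC_k(r_{\max})\le \mathrm{poly}(|\cP_n|)$ and Cauchy--Schwarz. This is easy to supply, but as written the ``uniform-integrability check'' is the one place where your argument is incomplete; citing Corollary~\ref{cor:betti_bounds} instead, as the paper does, sidesteps the issue entirely.
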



{\bf{Remarks:}} 
The expected Euler characteristic formula in Corollary \ref{cor:mean_ec} will not be used directly in proving the main results of this paper. Nevertheless, we included it here both for completeness, and for the fact that it provides valuable intuition which sometimes may even lead to concrete statements. At this point we want to mention the following observations.

\begin{enumerate}
\item The terms $A_jne^{-\Lambda}\Lambda^j$ appearing in the sum above vanish at $\Lambda = \log n + j\log\log n$. This implies that a significant change in the topology of the complex should occur at these points. While this intuitive statement is somewhat vague, it does agree with and strengthens the main results of this paper. 
\item
When $\Lambda \to \infty$ we have
\begin{equation}\label{eq:mean_ec_super}
	\mean{\chi(r)} \approx A_{d-1} n\Lambda^{d-1}e^{-\Lambda},
\end{equation}
where $A_{d-1} = \frac{1}{(d-1)!}(-1)^{d-1}D_d$, and $D_d > 0$. Thus, we can conclude that for very large $\Lambda$ the Euler characteristic converges to $\chi(\T^d)=0$, and its sign depends on the dimension of the torus. The intuition behind this phenomenon is that as we get  closer to  covering the torus  two main things happen:
\begin{itemize}
\item All the nontrivial cycles in the homology of $\T^d$ have been formed, except for the  cycle in the top dimensional homology.
\item The lack of coverage introduces many small voids (``bubbles") generating nontrivial $(d-1)$-cycles. 
\end{itemize} 
If $K$ is the number of bubbles, the Euler characteristic would therefore be 
\[
\chi(r) \approx \chi(\T^d) - (-1)^d + K(-1)^{d-1} = (K+1)(-1)^{d-1},
\]
which explains why $\sign(\chi(r)) \approx (-1)^{d-1}$ in this case.
\end{enumerate}


\subsection{Vanishing thresholds for critical points}


Recall the definition of $C_k(r)$ in \eqref{eq:C_k_r}, and define
\[
\hat C_k(r) := C_k(r_{\max})-C_k(r),
\]
i.e.~the number of critical points $c$ with $\rho_{n}(c)\in (r,r_{\max}]$. 
Since, with high probability we do not expect to have critical points with values higher than $r_{\max}$, the value of $\hat C_k(r)$ can be thought of as the ``remaining" number of critical points.
Using Proposition \ref{prop:mean_C_k_r} we have that 
\begin{equation}\label{eq:mean_C_k_diff}
	\meanx{\hat C_k(r)}= D_{k} n \param{e^{-\Lambda} \sum_{j=0}^{k-1} \frac{\Lambda^j}{j!}-e^{-\Lambda_{\max}} \sum_{j=0}^{k-1} \frac{\Lambda_{\max}^j}{j!}},
	\end{equation}
where  $\Lambda = \omega_d nr^d$, and $\Lambda_{\max} = \omega_d nr_{\max}^d $, which implies that $\Lambda \ll \Lambda_{\max}$ (since $r\to 0$ and $r_{\max}$ is positive).
If, in addition, we assume that $\Lambda\to\infty$, the dominant part in \eqref{eq:mean_C_k_diff} is $D_{k} n e^{-\Lambda}\Lambda^{k-1}$, and therefore we have the following phase transition.


\begin{cor}\label{cor:vanish_C_k}
Suppose that $\Lambda=  \log n + (k-1)\log\log n + w(n)$, then
\[
	\limninf\meanx{\hat C_k(r)} = \begin{cases} \infty & w(n) \to -\infty,\\
	D_k e^{-a} & w(n)\to a\in(-\infty,\infty), \\
	0 & w(n) \to \infty.\end{cases}
\]
\end{cor}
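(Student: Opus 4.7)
The plan is to treat this as a direct asymptotic evaluation of the exact formula \eqref{eq:mean_C_k_diff}, with three easy conclusions at the end. First I would handle the subtracted "tail" term. Since $\Lambda_{\max} = \omega_d n r_{\max}^d = \Theta(n)$, the quantity $n e^{-\Lambda_{\max}} \sum_{j=0}^{k-1} \Lambda_{\max}^j / j!$ is $O(n^k e^{-cn})$ for some $c > 0$, hence negligible compared to all regimes appearing on the right-hand side of the corollary. So modulo a $o(1)$ error,
\[
	\meanx{\hat C_k(r)} = D_k\, n e^{-\Lambda} \sum_{j=0}^{k-1} \frac{\Lambda^j}{j!}.
\]

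Next, the scaling hypothesis $\Lambda = \log n + (k-1)\log\log n + w(n)$ forces $\Lambda \to \infty$ (the $w(n)$ corrections are implicitly of smaller order than $\log n$ in all three regimes), so the partial Taylor sum is dominated by its top term:
\[
	\sum_{j=0}^{k-1} \frac{\Lambda^j}{j!} = \frac{\Lambda^{k-1}}{(k-1)!}\bigl(1 + O(1/\Lambda)\bigr).
\]
I would then plug in the scaling directly: the factors combine as
\[
	n e^{-\Lambda} \cdot \Lambda^{k-1} = \frac{e^{-w(n)}}{(\log n)^{k-1}} \cdot (\log n)^{k-1}\bigl(1 + o(1)\bigr) = e^{-w(n)}\bigl(1+o(1)\bigr),
\]
so that $\meanx{\hat C_k(r)} = \bigl(D_k/(k-1)!\bigr)\, e^{-w(n)}\bigl(1 + o(1)\bigr)$ (absorbing the $(k-1)!$ into $D_k$ as the statement does). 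The three cases of the corollary now follow by inspection: $w(n)\to-\infty$ gives $e^{-w(n)}\to\infty$; $w(n)\to a$ finite gives $e^{-w(n)}\to e^{-a}$; $w(n)\to+\infty$ gives $e^{-w(n)}\to 0$.

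There is no real obstacle here: the work was already done in Proposition \ref{prop:mean_C_k_r}, and this corollary is purely an asymptotic extraction. The only point that needs a sentence of care is verifying that the $(\log n)^{k-1}$ from $\Lambda^{k-1}$ cleanly cancels the one from $ne^{-\Lambda}$, which requires $w(n) = o(\log n)$; this is a harmless implicit assumption in all three regimes (if, for instance, $w(n)$ were comparable to $\log n$, one would simply land even faster in the $0$ or $\infty$ case, with the limit being unchanged).
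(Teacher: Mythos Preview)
Your proposal is correct and follows exactly the route the paper sketches just before stating the corollary: identify $D_k n e^{-\Lambda}\Lambda^{k-1}$ as the dominant part of \eqref{eq:mean_C_k_diff} and read off the three cases. Your treatment is in fact more careful than the paper's one-line justification, including the explicit handling of the $\Lambda_{\max}$ tail and the $(k-1)!$ factor (which the paper silently absorbs).
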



The above corollary suggests that $\Lambda = \log n + (k-1)\log\log n$ is the threshold for the vanishing of $\hat C_k(r)$, i.e.~it is a point above which we do not have any more critical points of index $k$. 
As mentioned in the beginning of this section, the vanishing of the critical points combined with Morse theory will help us  prove the upper bound part of Proposition \ref{prop:betti_order} and Theorem \ref{thm:homology_bounds}.


\section{Lower bound - $\Theta$-cycles}\label{sec:theta_cycles}


To prove the lower bound part in Proposition \ref{prop:betti_order} and Theorem \ref{thm:homology_bounds} we will show that there exist nontrivial $k$-cycles in $H_k(\cC(n,r))$ that do not belong to the torus $\T^d$ (i.e.~they are mapped to trivial cycles in $H_k(\T^d)$). 
To prove that such cycles exist, we will be studying a special construction of $k$-cycles that shows up when $\Lambda\to\infty$, and is localized around critical points of index $k$. 

Let $\cP\subset\R^d$, and $\cY\subset\cP$. 
Recall that if $\cY$ generates a critical point of the distance function $\rho_{_\cP}$, then $C(\cY)$ is the critical point itself , $R(\cY)$ is the critical value (the distance), and $\Delta(\cY)$ is the embedded simplex spanned by $\cY$. For a fixed $\eps \in (0,1)$  define
\begin{equation}\label{eq:A_eps}
	A_\eps(\cY) := B_{R(\cY)}(C(\cY)) \backslash B^\circ_{\eps R(\cY)}(C(\cY)),
\end{equation}
where $B_r(x)$ is a closed ball of radius $r$ centered at $x$, and $B^\circ_r(x)$ is an open ball, i.e.~$A_\eps(\cY)$ is a closed annulus around $C(\cY)$. Recall that by Morse theory, every index $k$ critical point either creates a new nontrivial $k$-cycle or terminates an existing $(k-1)$-cycle. The following Lemma provides a sufficient condition for the former to happen. 
While the statement is phrased for point sets in $\R^d$, using the local isometry of the torus into $\R^d$ , we can show that this is true for critical points $c\in\T^d$ with $\rho_{_\cP}(c) \le r_{\max}$.


\begin{lem}[$\Theta$-cycles]\label{lem:theta_cycle}
Let $\cP\subset\R^d$ be a set of points, $\rho_{_\cP}$ the distance function, and $\cY\subset\cP$ be a  set of $k+1$ points  generating a critical point of index $k$. 
Set
\begin{equation}\label{eq:epsilon}
\phi = \phi(\cY) := \frac{\inf_{x\in\partial \Delta(\cY)}\norm{x-C(\cY)}}{2R(\cY)},
\end{equation}
where $\partial \Delta(\cY)$ is the union of the faces on the boundary of the simplex $\Delta(\cY)$.
If  $A_\phi(\cY) \subset \cU(\cP,  R(\cY))$,
then the critical point generated by $\cY$ creates a new nontrivial cycle in $H_k(\cC(\cP, R(\cY)))$. A cycle created this way will be called a ``$\Theta$-cycle".
\end{lem}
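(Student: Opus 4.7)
The plan is to invoke the local change of homotopy type at the critical point $c=C(\cY)$ via Morse theory for min-type functions \cite{gershkovich_morse_1997} and show that, under the covering hypothesis, this change adds a new generator to $H_k$ rather than killing one in $H_{k-1}$. Concretely, for sufficiently small $\eps>0$ the interval $(R(\cY)-\eps,R(\cY)+\eps)$ contains no critical values other than $R(\cY)$, and $\cU(\cP,R(\cY)+\eps)$ is obtained from $\cU(\cP,R(\cY)-\eps)$ (up to homotopy) by attaching a single $k$-cell along an embedded $(k-1)$-sphere $\Sigma_{\mathrm{att}}$. The long exact sequence of the pair then shows that a new generator in $H_k$ is created precisely when $\Sigma_{\mathrm{att}}$ is null-homologous in $\cU(\cP,R(\cY)-\eps)$; otherwise an existing class in $H_{k-1}$ is killed. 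Passing to $\cC(\cP,R(\cY))$ via the Nerve Lemma, it therefore suffices to exhibit a null-homotopy of $\Sigma_{\mathrm{att}}$ in $\cU(\cP,R(\cY)-\eps)$.

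Next, I would identify $\Sigma_{\mathrm{att}}$ (up to homotopy inside $\cU(\cP,R(\cY)-\eps)$) with the boundary $\partial\Delta(\cY)$ of the simplex. A first-order expansion gives $\rho_{_\cP}^2(c+tv)=R(\cY)^2-2t\max_{y\in\cY}v\cdot(y-c)+O(t^2)$, so $\rho_{_\cP}$ strictly decreases from $c$ precisely along directions in the $k$-dimensional affine span of $\cY$. Consequently the descending manifold at $c$ is locally $\Delta(\cY)$, on which $\rho_{_\cP}$ attains its unique local maximum at $c$ with value $R(\cY)$. The sublevel portion $\Delta(\cY)\cap\{\rho_{_\cP}\le R(\cY)-\eps\}$ is then $\Delta(\cY)$ with a small open neighborhood of $c$ removed, which deformation retracts onto $\partial\Delta(\cY)$. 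Under this retraction, the attaching sphere $\Sigma_{\mathrm{att}}=\Delta(\cY)\cap\{\rho_{_\cP}=R(\cY)-\eps\}$ is carried onto $\partial\Delta(\cY)$.

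The geometric core is then a dimension-counting null-homotopy. By the definition of $\phi$, every $x\in\partial\Delta(\cY)$ satisfies $2\phi R(\cY)\le\norm{x-c}\le R(\cY)$ (the upper bound because $\Delta(\cY)$ is the convex hull of points lying on $\partial B_{R(\cY)}(c)$), hence $\partial\Delta(\cY)\subset A_\phi(\cY)$. The hypothesis $A_\phi(\cY)\subset\cU(\cP,R(\cY))$ together with the compactness of $A_\phi(\cY)$ and the continuity of $\rho_{_\cP}$ allows one to choose $\eps>0$ so small that $A_\phi(\cY)\subset\cU(\cP,R(\cY)-\eps)$. Now the annulus $A_\phi(\cY)=B_{R(\cY)}(c)\setminus B^{\circ}_{\phi R(\cY)}(c)$ deformation retracts onto the sphere $\partial B_{R(\cY)}(c)\cong S^{d-1}$, and since $k-1<d-1$ one has $\pi_{k-1}(S^{d-1})=0$; therefore $\partial\Delta(\cY)$ is null-homotopic in $A_\phi(\cY)$, and hence in $\cU(\cP,R(\cY)-\eps)$. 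Combining with the previous paragraph, $\Sigma_{\mathrm{att}}$ is null-homotopic there, which is what we needed.

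The main technical obstacle I anticipate is the identification in step two: making rigorous the claim that $\Sigma_{\mathrm{att}}$ is homotopic to $\partial\Delta(\cY)$ in the non-smooth min-type Morse theory of \cite{gershkovich_morse_1997}, where the classical smooth handle-attachment picture must be replaced with its polyhedral analogue and the local structure of the descending manifold verified. A secondary caveat is the implicit restriction $k\le d-1$: when $k=d$ the sphere $\partial\Delta(\cY)$ represents a generator of $H_{d-1}(A_\phi(\cY))$ and cannot be contracted there, which is consistent with the fact that $H_d(\cU(\cP,R))=0$ before the torus is covered, so no top-dimensional $\Theta$-cycle could be created in the first place.
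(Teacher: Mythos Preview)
Your argument is correct and shares the same geometric core as the paper's: the decisive observation is that the boundary $\partial\Delta(\cY)$ sits inside the annulus $A_{2\phi}(\cY)\subset A_\phi(\cY)$, which has trivial $(k-1)$-homology for $k<d$, so the ``attaching cycle'' bounds. The paper, however, reaches this point by a more elementary simplicial route that completely sidesteps the technical obstacle you flag in your last paragraph. Instead of invoking the min-type handle-attachment picture on $\cU(\cP,r)$ and then having to identify the attaching sphere $\Sigma_{\mathrm{att}}$ with $\partial\Delta(\cY)$, the paper works directly in the \v{C}ech complex: at the critical radius $R(\cY)$ a single new $k$-simplex $\Delta$ (the abstract simplex on the vertex set $\cY$) enters $\cC_r$, and the condition $B(\cY)\cap\cP=\emptyset$ forces $\Delta$ not to be the face of any $(k+1)$-simplex. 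A two-line chain argument then shows that if $\partial\Delta\in B_{k-1}(\cC_{r^-})$, say $\partial\Delta=\partial\gamma$, then $\Delta-\gamma$ is a new nontrivial class in $H_k(\cC_r)$. The Nerve Lemma isomorphism $H_{k-1}(\cC_{r^-})\cong H_{k-1}(\cU_{r^-})$ reduces the remaining question to checking that the geometric realization of $\partial\Delta$ bounds in $\cU_{r^-}$, and since it lies in the covered annulus $A_{2\phi}$ with $H_{k-1}(A_{2\phi})=0$, one is done. In effect you and the paper run the Nerve Lemma in opposite directions; the paper's direction has the advantage that the relevant $(k-1)$-cycle is literally the simplicial boundary $\partial\Delta$, so no ``identification up to homotopy inside the polyhedral descending manifold'' is required, and homology rather than homotopy of the annulus suffices.

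One small point of care: your sentence claiming that compactness of $A_\phi(\cY)$ and continuity of $\rho_{_\cP}$ yield $A_\phi(\cY)\subset\cU(\cP,R(\cY)-\eps)$ does not follow as stated, since the hypothesis only gives $\rho_{_\cP}\le R(\cY)$ on $A_\phi$, not strict inequality. The paper makes essentially the same move (shrinking to $A_{2\phi}$ and to $\cU_{r^-}$), and in the Poisson setting this is harmless; but as a deterministic lemma it deserves a word of justification.
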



{\bf{Remark:} }
Note that for $k=1$, $\partial\Delta(\cY) = \cY$ (the boundary of a $1$-simplex are its end points). In this case we have $\phi(\cY) = 1$ and then $A_\phi(\cY) = \partial B(\cY)$ (i.e.~a $(d-1)$-sphere). The statement above is still true.


The proof of this lemma (see Section \ref{sec:proofs}) requires some more working knowledge of algebraic topology, than is required for the rest of this paper. Therefore, we wish to provide the following intuitive explanation. 
At a critical point of index $k$ a new $k$-dimensional simplex is added to the complex $\cC(n,r)$, which is isolated in the sense that it is not a face of any higher dimensional simplex. We argue that adding this face necessarily generates a new $k$-cycle.
Since $A_\phi(\cY)$ is covered by $\cU(\cP,R(\cY))$, the $(d-1)$-sphere $\partial B(\cY)$ is covered as well, and so the new cycle is formed somewhere inside this sphere.
The effect of adding the new $k$-simplex can be viewed as ``splitting" the void inside the sphere using a $k$-dimensional plane. 
This action generates a nontrivial $k$-cycle. For example,  a line crossing a $2$-sphere will generate a $1$-cycle (or a loop), whereas adding a plane will split the sphere into two ``chambers" forming a $2$-cycle.
Figure \ref{fig:theta_cycles} provides  visual intuition to this explanation. Note that in the case where $d=2$ and $k=1$, we place an edge splitting the circle into two (creating a new $1$-cycle), and thus forming a shape similar to the letter $\Theta$, hence the name ``$\Theta$-cycles". 

To prove that $H_k(\cC(n,r))\ne H_k(\T^d)$ we will show that $\beta_k(r) > \beta_k(\T^d)$ by counting a subset of the cycles originating from $\Theta$-cycles.
For a fixed $\eps>0$, we define $\beta_k^\eps(r)$ to be the number of $\Theta$-cycles generated by subsets $\cY\subset\cP_n$ such that
\[
 (1)\ R(\cY) \in (r',r],\quad (2)\ B_{r''}(C(\cY))\cap\cP_n = \cY,\quad and\quad (3)\ \phi(\cY)\ge \eps,
\]
where
\[
	\quad r' = r(1-\delta), \quad r'' = r(1+\sqrt{2\delta}), \quad \delta = \Lambda^{-2}.
\]

Condition (1) ensures that the $\Theta$-cycles are created at a critical radius in $(r',r]$. Condition (2) ensures that at radius $r$, the new $k$-simplex added is still isolated, and therefore the new cycle it generates is nontrivial (this is justified in the proof of the next Lemma \ref{lem:theta_cycle_bound}). Condition (3) is required in order to be at a safe distance from the singular case  $\phi(\cY)=0$.
In other words, $\beta_k^\eps(r)$ counts $\Theta$-cycles that are created \emph{before} $r$ and terminated \emph{after} $r$, as stated in the following lemma.


\begin{lem}\label{lem:theta_cycle_bound}
Let $\eps>0$ be fixed, then
\[
\beta_k(r) \ge \beta_k^\eps(r).
\]
\end{lem}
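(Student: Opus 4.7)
The plan is to enumerate the subsets counted by $\beta_k^\eps(r)$ as $\cY_1,\ldots,\cY_N$ with $N=\beta_k^\eps(r)$, ordered so that $R(\cY_1)<\cdots<R(\cY_N)\le r$ (strict inequalities hold almost surely for the Poisson process), and to exhibit $N$ linearly independent classes in $H_k(\cC(n,r))$ via the long exact sequence of a suitable pair. The starting point is an isolation property: each $k$-simplex $\Delta(\cY_i)$ has no $(k{+}1)$-coface in $\cC(n,r)$. Indeed, if $p\in\cP_n$ and $q$ satisfy $\|q-y\|\le r$ for every $y\in\cY_i$ as well as $\|q-p\|\le r$, then projecting $q$ onto the affine span of $\cY_i$ and using $C(\cY_i)\in\conv(\cY_i)$ gives
\begin{equation*}
\|q-C(\cY_i)\|^2 \,\le\, r^2 - R(\cY_i)^2 \,\le\, r^2 - (r')^2 \,=\, r^2(2\delta-\delta^2) \,\le\, 2\delta r^2,
\end{equation*}
so $\|p-C(\cY_i)\|\le r+r\sqrt{2\delta}=r''$, and condition (2) forces $p\in\cY_i$. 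Consequently $K:=\cC(n,r)\setminus\{\Delta(\cY_1),\ldots,\Delta(\cY_N)\}$ is a subcomplex of $\cC(n,r)$.

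Next I consider the long exact sequence of the pair $(\cC(n,r),K)$. The relative chain complex is concentrated in degree $k$, equal there to $\F^N$ with basis $\{\Delta(\cY_i)\}$, so $H_k(\cC(n,r),K)=\F^N$ and $H_{k-1}(\cC(n,r),K)=0$. This yields
\begin{equation*}
H_k(K)\longrightarrow H_k(\cC(n,r))\xrightarrow{\;\pi\;}\F^N\xrightarrow{\;\partial_*\;}H_{k-1}(K),
\end{equation*}
and it suffices to prove $\partial_*=0$, as then $\pi$ is surjective and $\dim H_k(\cC(n,r))\ge N=\beta_k^\eps(r)$. By definition $\partial_*[\Delta(\cY_i)]=[\partial\Delta(\cY_i)]\in H_{k-1}(K)$, so I must show each $\partial\Delta(\cY_i)$ bounds in $K$.

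Lemma \ref{lem:theta_cycle} (whose annulus hypothesis is built into the definition of $\beta_k^\eps$) supplies, at the critical radius $R(\cY_i)$, a $k$-chain $\sigma_i$ inside $\cC(n,R(\cY_i))\setminus\{\Delta(\cY_i)\}$ satisfying $\partial\sigma_i=\partial\Delta(\cY_i)$; it is the "non-$\Delta(\cY_i)$" part of the $\Theta$-cycle, whose coefficient on $\Delta(\cY_i)$ is nonzero since that simplex is just born. Because $R(\cY_j)>R(\cY_i)$ for $j>i$, the chain $\sigma_i$ can contain $\Delta(\cY_j)$ only for $j<i$. I would then argue by induction on $i$: assume fillings $\sigma'_1,\ldots,\sigma'_{i-1}\in C_k(K)$ with $\partial\sigma'_j=\partial\Delta(\cY_j)$ have been constructed, decompose $\sigma_i=\sigma_i^{(0)}+\sum_{j<i} c_{ij}\Delta(\cY_j)$ with $\sigma_i^{(0)}$ free of the critical simplices, and set
\begin{equation*}
\sigma'_i \;:=\; \sigma_i^{(0)}+\sum_{j<i}c_{ij}\sigma'_j .
\end{equation*}
A direct computation gives $\partial\sigma'_i=\partial\Delta(\cY_i)$, while $\sigma'_i\in C_k(K)$ since all its summands are, closing the induction.

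The delicate step is the isolation calculation at the very beginning: the precise choices $r'=r(1-\delta)$, $r''=r(1+\sqrt{2\delta})$ with $\delta=\Lambda^{-2}$ are engineered exactly so that the projection bound closes, and everything hinges on the circumcenter $C(\cY_i)$ lying in the convex hull of $\cY_i$ (i.e.\! that $\cY_i$ is a genuine index-$k$ critical configuration in the sense of Lemma \ref{lem:gen_crit_point}). Once isolation is in hand, the homological bookkeeping via the long exact sequence is essentially formal.
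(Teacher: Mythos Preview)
Your proof is correct and shares with the paper the crucial isolation step: the bound $\|q-C(\cY_i)\|^2\le r^2-R(\cY_i)^2$ for $q\in\bigcap_{y\in\cY_i}B_r(y)$ is exactly Lemma~\ref{lem:intersect} (the convex-combination identity using $C(\cY_i)\in\Delta(\cY_i)$ already works in $\R^d$; no projection onto the affine span is needed, and in fact projection would go the wrong way). Where you diverge is in the homological bookkeeping. The paper argues that each $\Theta$-cycle, once born, ``survives'' to radius $r$ because its distinguished simplex $\Delta(\cY_i)$ remains isolated, and leaves implicit why the $N$ surviving classes are linearly independent in $H_k(\cC(n,r))$. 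Your packaging via the long exact sequence of the pair $(\cC(n,r),K)$, together with the inductive Gaussian-elimination step replacing occurrences of $\Delta(\cY_j)$ in $\sigma_i$ by the previously constructed fillings $\sigma'_j$, makes that independence explicit: surjectivity of $\pi$ onto $H_k(\cC(n,r),K)\cong\F^N$ is precisely the statement that the $N$ classes are independent. The underlying triangular structure (ordering by critical value, each cycle carrying its own $\Delta(\cY_i)$ with nonzero coefficient while isolation prevents these simplices from appearing in boundaries) is the same in both arguments; your relative-homology formulation is a more transparent way to cash it out.
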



Next, we will evaluate the expected value of $\beta_k^\eps(r)$.


\begin{lem}\label{lem:theta_cycle_order}
There exists $\eps>0$ such that if $\Lambda\to\infty$, then
\[
	\mean{\beta_k^\eps(r)} \sim n\Lambda^{k-2}e^{-\Lambda}.
\]
\end{lem}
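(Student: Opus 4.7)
The plan is to compute $\E[\beta_k^\eps(r)]$ by writing it as an integral over configurations of $(k+1)$ points via the Mecke (Palm) formula, and extract the asymptotic order through a rescaling argument. First I would apply Mecke's formula to obtain
\[
\E[\beta_k^\eps(r)] = \frac{n^{k+1}}{(k+1)!}\int_{(\T^d)^{k+1}} \E\bigl[h_\eps(\bx,\, \cP_n\cup\bx)\bigr]\, d\bx,
\]
where $h_\eps(\bx,\cP)$ is the indicator that $\bx$ generates an index-$k$ critical point of the distance function to $\cP$ satisfying conditions (1)--(3) together with the coverage condition $A_\phi(\bx)\subset \cU(\cP, R(\bx))$ of Lemma~\ref{lem:theta_cycle}. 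Condition (2) translates into the emptiness event $\cP_n\cap B_{r''}(C(\bx)) = \emptyset$ for the remaining Poisson process, and by spatial independence of $\cP_n$ on $B_{r''}(C(\bx))$ and its complement, the inner expectation factors as the product of the emptiness probability $\exp(-n\omega_d r''^d)$ and the conditional probability of the coverage condition given this emptiness.

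Next I would rescale by setting $y_i=(x_i-x_0)/r$ for $i=1,\ldots,k$ and $x=x_0\in\T^d$. Since $r\ll r_{\max}$, the ball of radius $r$ around any point of $\T^d$ embeds isometrically into $\R^d$, so the integrand depends only on $\by = (y_1,\ldots,y_k)\in(\R^d)^k$. The change of variables contributes a Jacobian $r^{dk}$, and the integration over $x$ yields $\vol(\T^d)=1$. Conditions (1), (3) and the critical-point condition carve out an admissible set in $(\R^d)^k$ whose Lebesgue measure equals $\delta\cdot c(1+o(1))$ for a positive constant $c=c(\eps,k,d)$: the shell $R(0,\by)\in(1-\delta,1]$ has thickness $\delta$ inside the fixed positive-measure region where $\phi(0,\by)\ge\eps$ and $(0,\by)$ generates an index-$k$ critical point with $R(0,\by)\approx 1$. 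Using $\sqrt{2\delta}\,\Lambda=\sqrt{2}$, the emptiness factor is $\exp(-\Lambda(1+\sqrt{2\delta})^d) = e^{-\Lambda}e^{-d\sqrt{2}}(1+o(1))$, i.e.\ $e^{-\Lambda}$ times a positive constant.

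Finally, I would show that the conditional coverage probability converges uniformly to $1$ over admissible $\by$. After rescaling, the uncovered portion of $A_\phi(\bx)$ by $\cY$'s own balls is a compact subset of a bounded region in $\R^d$, and the condition $\phi\ge\eps$ keeps this uncovered region at positive distance from the forbidden ball $B_{1+\sqrt{2\delta}}(0)$. The rescaled Poisson intensity is $nr^d=\Lambda/\omega_d\to\infty$, so standard coverage estimates (analogous to Flatto--Newman, cf.\ Appendix~\ref{sec:appendix_coverage}) give vacancy probability $o(1)$, uniformly over the admissible configuration space (compactness modulo Euclidean isometries yields the uniformity). Combining all factors via $nr^d=\Lambda/\omega_d$ gives
\[
\E[\beta_k^\eps(r)] = n^{k+1}r^{dk}\cdot\Theta(\delta)\cdot\Theta(e^{-\Lambda}) = n\cdot\Theta(\Lambda^k)\cdot\Theta(\Lambda^{-2})\cdot\Theta(e^{-\Lambda}) \sim n\Lambda^{k-2}e^{-\Lambda},
\]
as claimed. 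The hardest step is the uniform lower bound on the coverage probability: configurations with $\phi$ close to $0$ would produce uncovered regions touching $\partial B_{1+\sqrt{2\delta}}(0)$ and require delicate boundary-coverage estimates, which is precisely why the restriction $\phi\ge\eps$ was built into the definition of $\beta_k^\eps$.
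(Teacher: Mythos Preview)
Your approach is essentially the same as the paper's: apply Palm theory, factor out the emptiness probability $e^{-n\omega_d(r'')^d}$, rescale by $r$, note that the radial constraint $R\in(r',r]$ contributes a factor $\delta=\Lambda^{-2}$, and show the conditional coverage probability tends to $1$ uniformly. Your computation $\omega_d n(r'')^d = \Lambda + d\sqrt{2} + o(1)$ is correct and matches the paper.

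There is, however, a geometric error in your justification of the coverage step. You claim that ``the condition $\phi\ge\eps$ keeps this uncovered region at positive distance from the forbidden ball $B_{1+\sqrt{2\delta}}(0)$.'' This is false: in rescaled coordinates (center at $0$, $R\in(1-\delta,1]$) the annulus $A_\eps=\{x:\eps R\le |x|\le R\}$ lies \emph{entirely inside} $B_R(0)\subset B_{1+\sqrt{2\delta}}(0)$. The region you need to cover is inside the forbidden ball, not outside it. The correct mechanism, which the paper makes explicit via an $(\eps R/2)$-net, is that any point $s\in A_\eps$ satisfies $|s|\ge \eps R$, so the ball $B_{R(1-\eps/2)}(s)$ reaches out to distance at least $R(1+\eps/2)$ from the center; since $r''<R(1+\eps/4)$ for large $n$, a \emph{fixed positive fraction} of $B_{R(1-\eps/2)}(s)$ lies outside the forbidden ball $B_{r''}(0)$. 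That portion has volume $\ge c_2\omega_d R^d$, so it contains a Poisson point except with probability $\le e^{-c_2 n\omega_d(r')^d}\to 0$, and a union bound over the net gives the uniform coverage. The role of $\phi\ge\eps$ (equivalently, working with $A_\eps$) is to keep the annulus away from the \emph{center}, not away from the forbidden ball; if $\eps=0$ the inner boundary collapses to $C(\cY)$ and no exterior point can cover it with a ball of radius $R$.
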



This lemma provides the proof for the lower bound in Proposition \ref{prop:betti_order}. To prove the lower bound part of Theorem \ref{thm:homology_bounds} we will need to prove the following lemma.


\begin{lem}\label{lem:theta_cycle_vanish}
There exists $\eps>0$ such that for any $1\le k \le d-1$, if $\Lambda = \log n + (k-2)\log\log n -w(n)$ and $w(n)\gg\log\log\log n$ then
\[
\limninf\prob{\beta_k^\eps(r) > 0} = 1.
\]
\end{lem}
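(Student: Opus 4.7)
The plan is a second-moment argument. Write $N := \beta_k^\eps(r) = \sum_{\cY \subset \cP_n,\, |\cY|=k+1} h(\cY, \cP_n)$, where $h(\cY, \cP_n)$ is the indicator of the three conditions defining a counted $\Theta$-cycle. Lemma \ref{lem:theta_cycle_order} gives $\E[N] \sim n\Lambda^{k-2}e^{-\Lambda}$; substituting $\Lambda = \log n + (k-2)\log\log n - w(n)$ yields $\E[N] \sim e^{w(n)} \to \infty$. By Chebyshev's inequality, $\PP(N = 0) \le \Var(N)/\E[N]^2$, so it suffices to show $\Var(N) = o(e^{2w(n)})$.

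To control the variance, use the Slivnyak--Mecke (factorial-moment) formula for the Poisson process to expand $\E[N(N-1)]$ as $\sum_{j=0}^{k} I_j$, where $I_j$ collects the contributions from ordered pairs $(\cY_1, \cY_2)$ with $|\cY_1 \cap \cY_2| = j$. Each $I_j$ is an integral of $n^{2(k+1)-j}$ against the probability that both subsets satisfy conditions (1)--(3), and, by spatial independence, this probability carries an emptiness factor $\exp(-n\Vol(B_1 \cup B_2))$, where $B_i$ is the $r''$-ball around the centre of $\cY_i$. The dominant contribution comes from $j=0$ with $B_1 \cap B_2 = \emptyset$: there the integrand factorizes and gives exactly $\E[N]^2(1+o(1))$. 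The remaining pieces to control are (i) the diagonal $\E[N]$, (ii) the $j=0$ contribution restricted to pairs with overlapping forbidden balls, and (iii) all $I_j$ with $j \ge 1$.

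For (ii) and (iii) the strategy is the same: sharing $j$ points, or forcing the centres of $\cY_1$ and $\cY_2$ to lie within distance $O(r)$, restricts one configuration to a region of volume $O(r^d) = O(\Lambda/n)$, while the emptiness saving degrades from $e^{-2\Lambda}$ to $e^{-u\Lambda}$ with $u \in (1,2]$ depending on the overlap geometry. A careful accounting of the geometric factors then gives bounds of the form $O(n\Lambda^{B}e^{-\Lambda})$ for (ii) and $O(\Lambda^{-j}\E[N])$ or $O(\Lambda^{B'}\E[N]^2/n)$ for (iii). The main obstacle is precisely this geometric accounting: one must parametrize all pairs of $(k+1)$-tuples whose circumscribed balls overlap, subject to the constraints $R(\cY_i)\in(r',r]$ and $\phi(\cY_i)\ge\eps$, and track how $u$ depends on the centre separation. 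The resulting error terms carry polylogarithmic factors of the form $(\log n)^{O(1)}$; comparing with $\E[N]^2 \sim e^{2w(n)}$, these corrections become $o(1)$ exactly when $w(n)$ grows strictly faster than $\log\log\log n$, which is the hypothesis of the lemma. Combining all estimates yields $\Var(N) = o(\E[N]^2)$, and Chebyshev's inequality closes the argument.
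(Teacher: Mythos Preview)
Your overall strategy is correct and matches the paper's: a second-moment argument via Chebyshev, Palm theory to expand $\E[N^2]$, and a decomposition into $I_j$ according to $|\cY_1\cap\cY_2|=j$, with the disjoint-ball part of $I_0$ reproducing $\E[N]^2$. That skeleton is exactly what the paper does.

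There is, however, a genuine quantitative gap in your sketch. You assert that the error terms carry factors of size $(\log n)^{O(1)}$ and that this is killed by $w(n)\gg\log\log\log n$. These two claims are inconsistent: a bound of the form $(\log n)^{C}e^{-w(n)}$ (or $(\log n)^{C}e^{-2w(n)}$) on $\Var(N)/\E[N]^2$ would require $w(n)\gg\log\log n$, not $\log\log\log n$. The hypothesis $w(n)\gg\log\log\log n$ corresponds precisely to error terms of order $(\log\log n)^{O(1)}e^{-w(n)}$, and getting the polylog down from $\log n$ to $\log\log n$ is the non-obvious step that your proposal does not supply. The bounds you quote for $I_j$, namely $O(\Lambda^{-j}\E[N])$ or $O(\Lambda^{B'}\E[N]^2/n)$, are not what a direct estimate produces and would in fact make the $\log\log\log n$ hypothesis unnecessary, which should be a warning sign.

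The missing idea, which the paper carries out explicitly, is the following. After rescaling, split the integral defining $\E[I_j]$ according to whether the two circumcentres satisfy $\|C(0,\by_1)-C(0,\by_2)\|\le\alpha$ or $>\alpha$, with $\alpha$ a free parameter. On the near set the constraint $R(0,\by_2)\in(1-\delta,1]$ confines the points of $\by_2\setminus\by_1$ to a thin annulus of width $O(\alpha+\delta)$, contributing a factor $(\alpha+\delta)^{k+1-j}$. On the far set the union of the two balls has excess volume at least $c\alpha$ over a single ball, giving an extra decay $e^{-c\Lambda\alpha}$. Optimising with $\alpha\sim(\log\log n)/\log n$ balances the two contributions and yields
\[
\frac{\E[I_j]}{\E[N]^2}\ \le\ C\,e^{-w(n)}(\log\log n)^{k+1-j},
\]
which is exactly where the threshold $w(n)\gg\log\log\log n$ enters. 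The same near/far dichotomy handles the overlapping part of $I_0$. Without this split and the tuned $\alpha$, the crude bound on the overlap exponential gives only $(\log n)^{O(1)}$ and forces the stronger hypothesis $w(n)\gg\log\log n$.
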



\begin{figure}[h!]
\centering
\subfigure[]
{
  \centering
     {\includegraphics[scale=0.35]{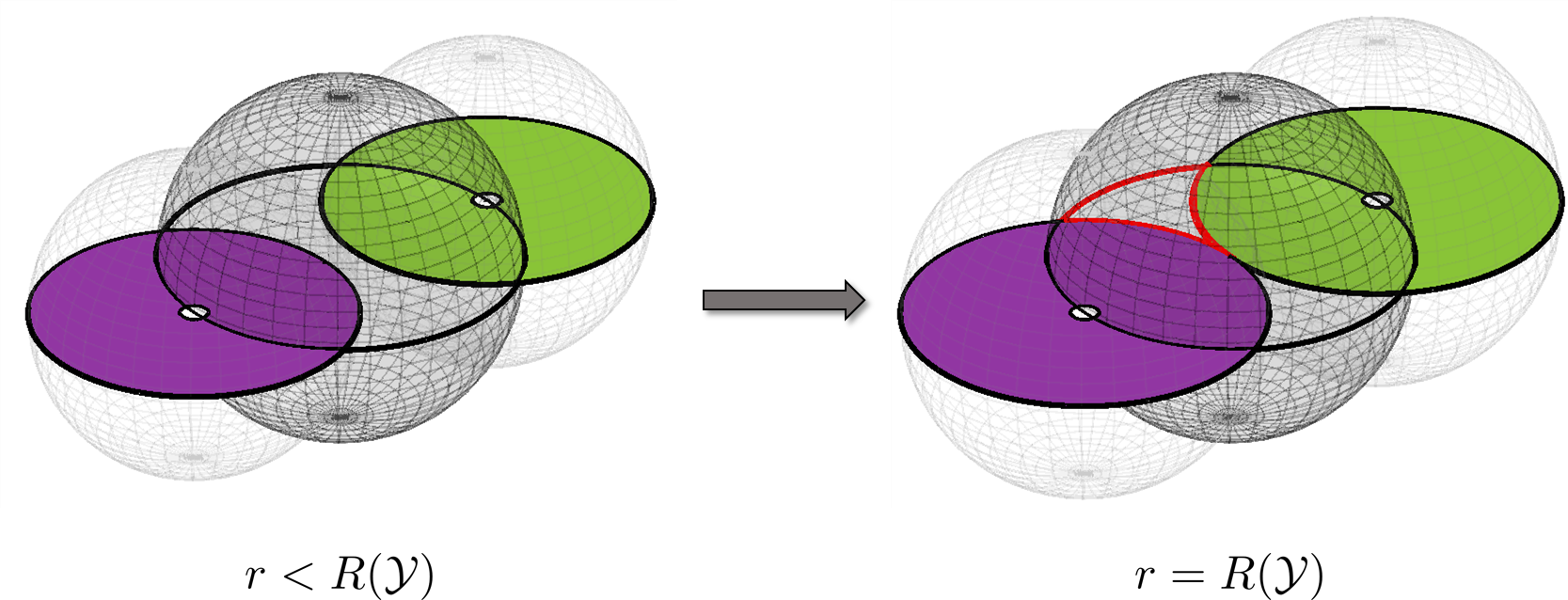}}
}
\subfigure[]
{
  \centering
   {\includegraphics[scale=0.35]{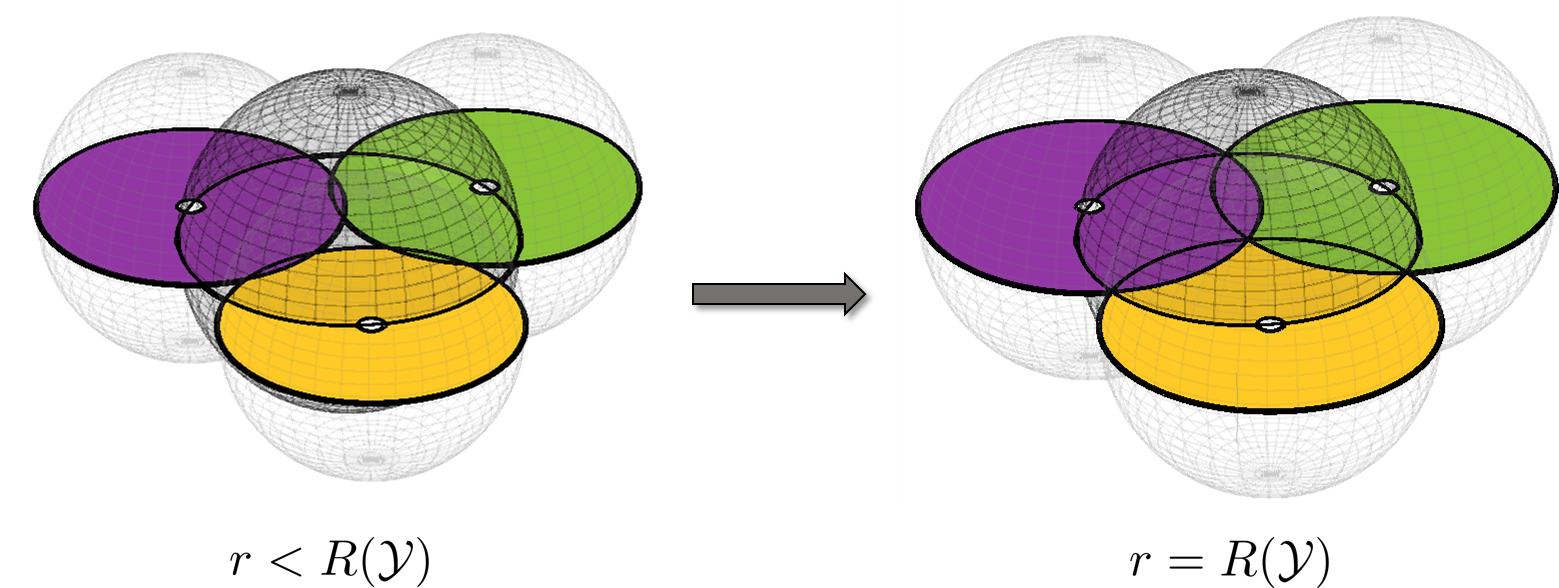}}
}

\caption{ A visual intuition for $\Theta$-cycles for $d=3$. (a) A $\Theta$-cycle with $k=1$: The set $\cY$ consists of the two white dots placed on the sphere $\partial B(\cY)$ of radius $R(\cY)$. 
We drew the balls around the two points as semi-transparent and emphasized the disc at the equator to demonstrate the effect of the critical point. Recall that the assumptions in Lemma \ref{lem:theta_cycle_vanish} imply that the sphere $\partial B(\cY)$ is covered by $\cU(\cP,r)$. Therefore, once the two balls touch (at $r=R(\cY)$) the gap between the balls and the covered sphere creates a closed loop, or a $1$-cycle (marked in red). This effect is equivalent placing a $1$-dimensional ball (a string) at the equator. (b) A $\Theta$-cycle with $k=2$: In this case the set $\cY$ consists of three points. Once the balls around the points touch, they divide the void inside the sphere $\partial B(\cY)$ into two, introducing a new $2$-cycle. Topologically, this is equivalent to placing a $2$-dimensional disc at the equator.
}
\label{fig:theta_cycles}
\end{figure}


\section{Proofs}\label{sec:proofs}


In this section we wish to present the detailed proofs for the statements in Sections \ref{sec:main_results}-\ref{sec:theta_cycles}. 
In Section \ref{sec:proofs_5} we prove the main results that appeared in Section \ref{sec:main_results}, based on the lemmas that appeared later in Sections \ref{sec:upper_bound} and \ref{sec:theta_cycles}. Next, in Sections \ref{sec:proofs_6} and \ref{sec:proofs_7} we prove the lemmas that appeared in Sections \ref{sec:upper_bound} and \ref{sec:theta_cycles}, respectively.


\subsection{Proofs for Section \ref{sec:main_results}} \label{sec:proofs_5}


\begin{proof}[Proof of Proposition \ref{prop:betti_order}] \ \\
We need to show that $
a_{k} n\Lambda^{k-2}e^{-\Lambda} \le	\meanx{\beta_k(r)} \le \beta_k(\T^d) + b_{k}{n\Lambda^k e^{-\Lambda}}$.

{\bf Upper bound:} Denote $\hat\beta_k(r) = \beta_k(r)-\beta_k(\T^d)$, then we need to show that $\meanx{\hat\beta_k(r)}\le b_k n\Lambda^k e^{-\Lambda}$ for some $b_k>0$. This will be done using the expected number of critical points provided by Proposition \ref{prop:mean_C_k_r}. The main idea is that the number of critical points of index $k+1$ occurring at a radius greater than $r$ (i.e.~$\hat C_{k+1}(r)$) should serve as an upper bound to $\hat \beta_k(r)$, since all the extra cycles (the ones that do not belong to the torus) must be terminated at some point.

Let $E$ denote the event that $\cU(n,r_{\max})$ covers the torus. Then,
\begin{equation}\label{eq:mean_beta_k_cond}
	\meanx{\hat\beta_k(r)} = \cmeanx{\hat\beta_k(r)}{E}\prob{E} + \cmeanx{\hat\beta_k(r)}{E^c}\prob{E^c}.
\end{equation}
If the event $E$ occurs, then $\rho_n(x)\le r_{\max}$ everywhere on the tours, and therefore using Morse theory (see Section \ref{sec:torus_dist_fn}) every extraneous $k$-cycle in $\cC(n,r)$ that does not belong to the torus will eventually be terminated by a critical point of index $k+1$ with value  in $(r,r_{\max}]$.
 Therefore, we must have $\hat\beta_k(r) \le  \hat C_{k+1}(r) = C_{k+1}(r_{\max}) - C_{k+1}(r)$, and then
\begin{equation}\label{eq:bound_mean_betak_cover}
\cmeanx{\hat\beta_k(r)}{E}\prob{E} \le  \cmeanx{\hat C_{k+1}(r)}{E}\prob{E} 
\le\meanx{\hat C_{k+1}(r)}.
\end{equation}
Using Proposition \ref{prop:mean_C_k_r} we have that 
\begin{equation}
	\meanx{\hat C_{k+1}(r)}= D_{k+1} n \param{e^{-\Lambda} \sum_{j=0}^{k} \frac{\Lambda^j}{j!}-e^{-\Lambda_{\max}} \sum_{j=0}^{k} \frac{\Lambda_{\max}^j}{j!}},
	\end{equation}
where  $\Lambda = \omega_d nr^d$, and $\Lambda_{\max} = \omega_d nr_{\max}^d $, which implies that $\Lambda \ll \Lambda_{\max}$ (since $r\to 0$ and $r_{\max}$ is positive). Since, in addition, we assume that $\Lambda\to\infty$, the dominant part in \eqref{eq:mean_C_k_diff} is $D_{k+1} n e^{-\Lambda}\Lambda^{k}$.
Putting it back into \eqref{eq:bound_mean_betak_cover} we have
\[
\cmeanx{\hat\beta_k(r)}{E}\prob{E} = O(n e^{-\Lambda}\Lambda^{k}).
\]
To complete the proof we show that the second summand in \eqref{eq:mean_beta_k_cond} is $o( n e^{-\Lambda}\Lambda^k)$. For any simplicial complex, the $k$-th Betti number is bounded by the number of $k$-dimensional faces (see, for example, \cite{hatcher_algebraic_2002}). Since the number of faces is bounded by $\binom{\abs{\cP_n}}{k+1}$, we have that 
\begin{equation}\label{eq:mean_betak_no_cover}\begin{split}
	\cmeanx{\beta_k(r)}{E^c} \prob{E^c}&\le \cmean{\binom{\cP_n}{k+1}}{E^c}\prob{E^c} \\
	&= \sum_{m=k+1}^\infty \binom{m}{k+1}\cprob{\abs{\cP_n} = m}{E^c}\prob{E^c} \\
	&=\sum_{m=k+1}^\infty \binom{m}{k+1}\cprob{E^c}{\abs{\cP_n}=m}\prob{\abs{\cP_n} = m},
	\end{split}
\end{equation}
where we used Bayes' Theorem. Since $\cP_n$ is a homogeneous Poisson process with intensity $n$ we have that $\prob{\abs{\cP_n}=m} = \frac{e^{-n}n^m}{m!}$, and also that given $\abs{\cP_n}=m$ we can write $\cP_n$ as a set of $m$ $\iid$ random variables $\cX_m = \set{X_1,\ldots, X_m}$ uniformly distributed on the torus. Therefore,
\[
	\cprob{E^c}{\abs{\cP_n}=m} = \prob{\T^d \not\subset \cU(\cX_m,r_{\max})}.
\]
Let $\eps= r_{\max}/2$, and let $\cS$ be a $\eps$-net of $\T^d$, i.e.~for every $x\in\T^d$ there is a point $s\in \cS$ such that $\rho(s,x)\le \eps$.
Since $r_{\max}$ is fixed, we can find such an $\eps$-net with $\abs{\cS} = M$, where $M$ is a constant that depends only on $d$. Note that if for every $s\in\cS$ there is  $X_i\in \cX_m$ with $\rho(s,X_i)\le \eps$, then for every $x\in \T^d$ we have
\[
\rho(x,X_i) \le \rho(x,s) + \rho(s,X_i) \le 2\eps =  r_{\max},
\]
and therefore $\T^d \subset \cU(\cX_m,r_{\max})$. Thus, if $\T^d\not\subset\cU(\cX_m,r_{\max})$, then there exists $s\in \cS$ with $\rho_{_{\cX_m}}(s) > \eps$, which yields
\[
\prob{\T^d \not\subset \cU(\cX_m,r_{\max})}\le \sum_{s\in\cS}\prob{\rho_{\cX_m}(s) >\eps} = M(1-\omega_d \eps^d)^m .
\]
Putting it all back into \eqref{eq:mean_betak_no_cover} we have
\[
	\cmeanx{\beta_k(r)}{E^c} \prob{E^c} \le \sum_{m=k+1}^\infty \binom{m}{k+1} M(1-\omega_d\eps^d)^m \frac{e^{-n}{n^m}}{m!} = \frac{M}{(k+1)!}(n(1-\omega_d\eps^d))^{k+1} e^{-n\omega_d\eps^d}.
\]
Since $\eps>0$ is a fixed constant, and since $\Lambda \ll n$ and $\Lambda\to\infty$, we have that
\[
\cmeanx{\beta_k(r)}{E^c} \prob{E^c}=  o(n e^{-\Lambda}\Lambda^k).
\]
Since $\hat\beta_k(r)\le \beta_k(r)$ we also have
\[
\cmeanx{\hat\beta_k(r)}{E^c} \prob{E^c}=  o(n e^{-\Lambda}\Lambda^k).
\]
To conclude, we have $\meanx{\hat\beta_k(r)} = O(n\Lambda^ke^{-\Lambda})$.
That completes the proof for the upper bound.
\\


{\bf Lower bound:} We need to show that $\meanx{\beta_k(r)} \ge a_kn\Lambda^{k-2}e^{-\Lambda}$ for some $a_k > 0$.  From Lemma \ref{lem:theta_cycle_order} we know that $\mean{\beta_k^\eps(r)}\sim n\Lambda^{k-2}e^{-\Lambda}$. From  Lemma \ref{lem:theta_cycle_bound}  we have that $\beta_k(r) \ge \beta_k^\eps(r)$. That completes the proof.
\end{proof}


Next, we prove Corollary \ref{cor:betti_bounds}.
\begin{proof}[Proof of Corollary \ref{cor:betti_bounds}]\ \\
From Proposition \ref{prop:betti_order} we have that $\meanx{\beta_k(r)} \le \beta_k(\T^d)+ b_k ne^{-\Lambda}\Lambda^{k} $ and therefore if $\Lambda = \log n + k\log\log n +w(n)$ we have $\meanx{\hat\beta_k(r)} \to \beta_k(\T^d)$.
In addition, we also have that $\meanx{\beta_k(r)}\ge a_k n\Lambda^{k-2}e^{-\Lambda}$.
Taking $\Lambda = \log n + (k-2)\log\log n -w(n)$ we then have that $\meanx{\beta_k(r)} \to \infty$.
\end{proof}


Finally, we will prove Theorem \ref{thm:homology_bounds}.


\begin{proof}[Proof of Theorem \ref{thm:homology_bounds}]\ 


{\bf Upper bound:} 
From Corollary \ref{cor:coverage}  for every $\delta \in (r_{\max}/2,r_{\max})$ we have that $\T^d \subset \cU(n,\delta)$ with high probability, implying that $H_k(\cC(n,\delta))\cong H_k(\T^d)$.
From Corollary \ref{cor:vanish_C_k} we know that if $\Lambda = \log n + k\log\log n  + w(n)$ then  both $\meanx{\hat C_{k}(r)} \to 0$ and $\meanx{\hat C_{k+1}(r)} \to 0$.
Using Markov's inequality, this implies that with high probability both $\hat C_k(r) = 0$ and $\hat C_{k+1}(r) = 0$.
By Morse Theory, this implies that for  $\delta \in (r,r_{\max})$ the $k$-th homology of $\cC(n,\delta)$ does not change. Therefore, we conclude that with high probability $H_k(\cC(n,r)) \cong H_k(\T^d)$.

{\bf Lower bound:} If $\Lambda = \log n + (k-2)\log\log n - w(n)$, and $w(n) \gg \log\log \log n$ then from Lemma \ref{lem:theta_cycle_vanish} we have
$
	\prob{\beta_k^\eps(r)>0}\to 1
$
for every  $1\le k\le d-1$. From Lemma \ref{lem:theta_cycle_bound} we have that $\beta_k(r) \ge \beta_k^\eps(r)$. Therefore, with high probability we have $\beta_k(r) >0$ which completes the proof.

\end{proof}


\subsection{Proofs for Section \ref{sec:upper_bound}} \label{sec:proofs_6}


\begin{proof}[Proof of Proposition \ref{prop:mean_C_k_r}] \ \\ 
We want to show that $\mean{C_k(r)} = D_k n \param{1-e^{-\Lambda} \sum_{j=0}^{k-1} \frac{\Lambda^j}{j!}}$ for $r\le r_{\max}$.
First, note that the expected number of critical points with $r=r_{\max}$ is zero, since 
this requires a subset of $k+1$ points to lie on a sphere of radius exactly $r_{\max}$ and this happens with probability zero. Thus, for the remainder of the proof we will assume that $r<r_{\max}$.

Let $\cP \subset \T^d$ be a finite set of points with its corresponding distance function $\rho_{_\cP}$, and let $c$ be a critical point of the distance function $\rho_{_\cP}$ with $\rho_{_\cP}(c)\le r$.  Following the discussion in Section \ref{sec:torus_dist_fn} and Lemma \ref{lem:gen_crit_point}, since $r< r_{\max}$ then $c$ is generated by a subset  $\cY$ that satisfies the following:
\begin{equation}\label{eq:crit_pt_cond}
\textrm{(1) } C(\cY) \in \Delta(\cY),\qquad \textrm{(2) } B(\cY) \cap \cP = \emptyset,\qquad \textrm{(3) } R(\cY) \le r,
\end{equation}
where $C(\cY), R(\cY), B(\cY)$ and $\Delta(\cY)$ are defined in \eqref{eq:def_C}-\eqref{eq:def_Delta}.
For any $\bx\in (\R^d)^{k+1}$ and  $\cP\subset\R^d$, we define the following indicator functions:
\begin{equation}\label{eq:h_funcs}
\begin{split}
	h(\bx) &:= \ind\set{ C(\bx) \in \Delta(\bx)},\\
	h_r(\bx) &:= h(\bx)\ind\set{R(\bx) \le r},\textrm{ and }\\
	g_r(\bx, \cP) &:= h_r(\bx)\ind\set{B(\bx)\cap \cP = \emptyset}.
\end{split}
\end{equation}
These functions can also be applied to points on the torus, provided that $\bx$ is contained in a ball of radius less than $r_{\max}$ and using the isometry into $\R^d$. If this is the case, we will define $h(\bx), h_r(\bx),$ and $g_r(\bx)$ the same way, otherwise we will set the function value  to be zero.
Using the above definition we can write
\[
	C_k(r) = \sum_{\substack{\cY\subset\cP_n\\\abs{\cY} = k+1}} g_r(\cY,\cP_n).
\]
Applying Palm theory to the mean value (see Theorem \ref{thm:prelim:palm}) we have
\[
\mean{C_k(r)} = \frac{n^{k+1}}{(k+1)!} \mean{g_r(\cY',\cY'\cup\cP_n)},
\]
where $\cY'$ is a set of $(k+1)$ $\iid$ points on $\T^d$, independent of $\cP_n$. Note that by the properties of the Poisson process $\cP_n$ we have that
\[
	\cmean{\ind\set{B(\cY')\cap \cP_n = \emptyset}}{\cY'} = \cprob{\abs{\cP_n\cap B(\cY')}=0}{\cY'}= e^{-n\vol(B(\cY'))} .
\]
Since we require that $R(\cY') \le r < r_{\max}$ then $B(\cY')$ is isometric to a Euclidean ball, and  $\vol(B(\cY')) = \omega_dR^d(\cY')$.
Therefore, 
\begin{equation}\label{eq:C_k_mean_start}
\mean{C_k(r)} = \frac{n^{k+1}}{(k+1)!}\int_{(\T^d)^{k+1}} h_{r}(\bx) e^{-n\omega_dR^d(\bx)} d\bx.
\end{equation}
Note that the functions $h_r(\bx)$ and $R(\bx)$ are translation invariant, and  therefore we can write
\[
\mean{C_k(r)} = \frac{n^{k+1}}{(k+1)!}\int_{(\T^d)^{k}} h_{r}(0,\bx') e^{-n\omega_dR^d(0,\bx')} d\bx',
\]
where $\bx' = (x_1,\ldots,x_k)$. Since the set $(0,\bx')$ is  contained in a ball of radius $r$, then necessarily we have that $x_i \in B_{2r}(0)$ for all $1\le i \le k$ and since $r< r_{\max}$ we can isometrically embed  this ball into $\R^d$. Thus,  from here on we can assume that we are integrating over points in $\R^d$ and write
\[
\mean{C_k(r)} = \frac{n^{k+1}}{(k+1)!}\int_{(\R^d)^{k}} h_{r}(0,\bx') e^{-n\omega_dR^d(0,\bx')} d\bx'.
\]

To proceed, we use the following change of variables:
\[
\begin{split}
	x_i &\to ry_i,\ \  1\le i \le k, \ \ y_i\in \R^d.
\end{split}
\]
Denoting $\by = (y_1,\ldots, y_k) \in (\R^d)^k$ we then have,
\[
	\begin{split}
	d\bx' &\to r^{dk} d\by, \\
	h_r(0,\bx') &\to h_r(0,r\by) = h_1(0,\by), \\
	R(0,\bx') &\to R(0,r\by) = r R(0,\by).
	\end{split}
\]
Therefore,
\begin{equation}\label{eq:N_k_r_1}
\begin{split}
	\mean{C_k(r)} &= \frac{n^{k+1}r^{dk}}{(k+1)!} \int_{(\R^d)^k} h_1(0,\by) e^{-\omega_d n r^d R^d(0,\by)} d\by  \\
	&= \frac{n \Lambda^k}{\omega_d^k(k+1)!}  \int_{(\R^d)^k} h_1(0,\by) e^{-\Lambda R^d(0,\by)} d\by.
\end{split}
\end{equation}
To further simplify this expression, we take the following change of variables. Instead of using the coordinates $y_1, \ldots,  y_k\in \R^d$, we use the fact that the $k+1$ points - $\{0,y_1,\ldots, y_k\}$ lie on the unique $(k-1)$-sphere $S(0,\by)$ (defined in \eqref{eq:def_S}). We can use spherical coordinates $(\tau, \theta)$ to identify the center of the sphere, where $\tau \in (0,\infty)$, and $\theta \in S^{d-1}$ (the $(d-1)$-unit sphere).  Since $0\in S(0,\by)$ we have that $R(0,\by) = \tau$. Once we know where the center is, in order to uniquely identify the sphere $S(0,\by)$, we need to choose the $k$-dimensional subspace in which it resides. We represent this choice by the variable $\gamma\in \Gamma_{d,k}$ - an element in the $k$-dimensional Grassmannian (the space of all $k$-dimensional linear subspaces of $\R^d$). Finally, after identifying the sphere $S(0,\by)$, we need to choose $k$ points on it, we do that by taking spherical coordinates -  $\theta_1,\ldots,\theta_k \in S^{k-1}$. Combined together we have the change of variables
\begin{equation}\label{eq:sphere_change}
	y_i \to \tau\cdot (\theta + \gamma \circ \theta_i),\quad 1\le i \le k.
\end{equation}
Defining $J(\tau,\theta, \gamma, \bth) := \frac{\partial \by}{\partial(\tau,\theta,\gamma,\bth)}$ we can rewrite \eqref{eq:N_k_r_1}  as
\[
	\mean{C_k(r)} = \frac{n\Lambda^{k}}{\omega_d^k(k+1)!}  \int_0^{1} \int_{\theta,\gamma,\bth} h(0,\theta+\gamma\circ \bth) e^{-\Lambda \tau^d}  J(\tau,\theta,\gamma,\bth)  d\theta d\gamma d\bth d\tau.
\]
Notice that \eqref{eq:sphere_change} implies that $J(\tau,\theta,\gamma,\bth) = \tau^{dk-1} J(1,\theta,\gamma,\bth)$, and therefore
\begin{equation}\label{eq:mean_nk_2}
\begin{split}
	\mean{C_k(r)} &= \frac{n\Lambda^{k}}{\omega_d^k(k+1)!}  \int_0^{1}  \tau^{dk-1} e^{-\Lambda \tau^d} d\tau \int_{\theta,\gamma,\bth} h(0,\theta+\gamma\circ \bth) J(1,\theta,\gamma,\bth)  d\theta d\gamma d\bth \\
	&= \tilde D_k n\Lambda^{k} \int_0^{1}  \tau^{dk-1} e^{-\Lambda \tau^d} d\tau,
\end{split}
\end{equation}
where $\tilde D_k:=\frac{1}{\omega_d^k(k+1)!}\int_{\theta,\gamma,\bth} h(0,\theta+\gamma\circ \bth) J(1,\theta,\gamma,\bth)  d\theta d\gamma d\bth $ - a constant that depends only on $k$ and $d$.
Next, we  make one final change of variables -  $\Lambda\tau^d \to t$, which implies that $dt = d\cdot\Lambda \tau^{d-1}d\tau$ and thus,
\begin{equation}\label{eq:mean_C_k_r}
	\mean{C_k(r)} = \frac{\tilde D_k}{d} n \int_{0}^\Lambda t^{k-1}e^{-t}dt.
\end{equation}
The latter integral is known as the \emph{lower incomplete gamma function} and has a closed form expression which yields,
\[
	\mean{C_k(r)} = \frac{\tilde D_k}{d} n (k-1)!\param{1-e^{-\Lambda}\sum_{j=0}^{k-1}\frac{\Lambda^j}{j!}}.
\]
Finally, denoting 
\begin{equation}\label{eq:D_k}
D_k = \frac{\tilde D_k (k-1)!}{d }
\end{equation} 
completes the proof.
\end{proof}


Remark: The conditions in Lemma \ref{lem:gen_crit_point} can  also be phrased by saying that the points in $\cY$ generate a critical point if their Voronoi cells have a common intersection with their dual Delaunay cell. The Voronoi and Delaunay tessellation have been extensively studied in the stochastic geometry literature. In particular, for Poisson-Delaunay complexes, it is known that circumspheres of the $d$ dimensional cells (which correspond to the radius of index $d$ critical points) have a gamma distribution \cite{moller_lectures_2012}. This agrees with the form of the integral in \eqref{eq:mean_nk_2}, from which the incomplete gamma function in the Proposition comes from.



\begin{proof}[Proof of Corollary \ref{cor:mean_ec}]
First note that since $\T^d$ is a $d$-dimensional manifold and $\cU(n,r)\subset \T^d$,
then for $k>d$ we have $H_k(\cU(n,r)) = 0$. For $r<r_{\max}$, by the Nerve lemma \ref{lem:nerve} this is true for $\cC(n,r)$ as well. Therefore,
\begin{equation}\label{eq:ec_cech_betti}
\chi(r) = \sum_{k=0}^d (-1)^k \beta_k(r).
\end{equation}
One consequence of Morse theory (cf. \cite{milnor_morse_1963}) is that the sum of Betti numbers in \eqref{eq:ec_cech_betti} can be replaced by
\[\begin{split}
	\chi(r) = \sum_{k=0}^d (-1)^k C_k(r).
	\end{split}
\]
Using Proposition \ref{prop:mean_C_k_r}, and the fact that $\mean{C_0(r)} = n$ for every $r>0$ (all the minima are at distance $0$), we have for every $r \le r_{\max}$
\[
\begin{split}
	\mean{\chi(r)} &= n + \sum_{k=1}^d (-1)^{k} \param{D_k n \param{1-e^{-\Lambda}\sum_{j=0}^{k-1}\frac{\Lambda^j}{j!}}} \\
	&= n\param{1+\sum_{k=1}^d(-1)^kD_{k}}+ne^{-\Lambda}\sum_{j=0}^{d-1}\sum_{k=j+1}^d(-1)^{k-1} D_k \frac{\Lambda^j}{j!}.
\end{split}
\]
Denoting 
\begin{equation}\label{eq:A_j}
A_j = \frac{1}{j!}\sum_{k=j+1}^{d}(-1)^{k-1} D_k
\end{equation}
we have
\begin{equation}\label{eq:mean_ec_full}
\mean{\chi(r)} = n\param{1-A_0}+ne^{-\Lambda}\sum_{j=0}^{d-1}A_j {\Lambda^j}
\end{equation}
Taking $\Lambda\to \infty$, from Corollary \ref{cor:betti_bounds} we have that in this case $\meanx{\beta_k(r)} \to \beta_k(\T^d)= \binom{d}{k}$. Using \eqref{eq:ec_betti} we then have
\[
	\limninf \mean{\chi(r)} =\chi(\T^d) =  \sum_{k=0}^d (-1)^k \binom{d}{k} = 0.
\]
On the other hand, as $\Lambda\to\infty$ (and $n\to\infty$) the second term in \eqref{eq:mean_ec_full} vanishes while the first keeps growing. In order for the limit to be zero, we therefore must have  $A_0 = 1$.
That completes the proof. 

\end{proof}


\subsection{Proofs for Section \ref{sec:theta_cycles}} \label{sec:proofs_7}


Lemma \ref{lem:theta_cycle} is central to the proof for the existence of cycles.
However, its proof is more algebraic topological and in a different spirit than the rest of this paper.  The details of this argument are not used elsewhere in the paper, and an intuitive explanation was presented in Section \ref{sec:theta_cycles}.


In the following proof we will be using both simplicial homology (for the \cech complex $\cC(\cP,r)$) and singular homology (for the union of balls $\cU(\cP,r)$). An introduction to both can be found in \cite{hatcher_algebraic_2002,munkres_elements_1984}. In both cases, we denote
\begin{itemize}
\item $C_k$ - the chain group, i.e.~all formal sums of simplexes (should not be confused with $C_k(r)$ used in the paper),
\item $\partial_k:C_k\to C_{k-1}$ - the boundary operator,
\item $Z_k$ - the cycles groups consisting of all chains $\gamma\in C_k$ such that $\partial_k \gamma = 0$,
\item $B_k$ - the group of boundary chains,  i.e.~all the chains $\gamma\in C_k$ such that $\gamma = \partial_{k+1} \hat\gamma$ for some $\hat\gamma\in C_{k+1}$,
\item $H_k$ - the $k$-th homology group defined by $H_k = Z_k/B_k$.
\end{itemize}

Finally, recall the definitions in \eqref{eq:A_eps} and \eqref{eq:epsilon} - 
\[
	A_\eps(\cY) := B_{R(\cY)}(C(\cY)) \backslash B^\circ_{\eps R(\cY)}(C(\cY)),
\]
and
\[
\phi = \phi(\cY) := \frac{\inf_{x\in\partial \Delta(\cY)}\norm{x-C(\cY)}}{2R(\cY)}.
\]

\begin{proof}[Proof of Lemma \ref{lem:theta_cycle}]\ \\
We want to show that for a critical point of index $k$ generated by $\cY$, if $A_\phi(\cY) \subset \cU(\cP,  R(\cY))$, then a $k$-cycle is created.
Consider the \cech complex $\cC_r := \cC(\cP,r)$. Since $R(\cY)$ is the critical value, then at $r=R(\cY)$ a single new $k$-simplex is added to the complex, call it $\Delta$. 
Let $r^-<r$ be a radius close enough to $r$ such that $\cC_{r^-} = \cC_r\backslash\Delta$  and in particular such that $\partial \Delta\in \cC_{r^-}$. Recall also that for a critical point to exist we require that $B(\cY) \cap \cP = \emptyset$, and therefore $\Delta$ is not a face of any $(k+1)$-simplex in $\cC_r$ (by the construction of the \cech complex).

The boundary $\partial \Delta$ is a cycle in $Z_{k-1}(\cC_{r^-})$. Suppose that  $\partial \Delta\in B_{k-1}(\cC_{r^-})$, then $\partial \Delta = \partial \gamma$ for some $\gamma\in C_k(\cC_{r^-})$. Taking $\gamma' = \Delta-\gamma$ then $\partial\gamma' = 0$, so $\gamma'\in Z_k(\cC_r)$. However, since $\Delta$ is not a face of any $(k+1)$-simplex in $\cC_r$, we must have $\gamma'\not\in B_k(\cC_r)$ and therefore $\gamma'$ is a nontrivial $k$-cycle in $H_k(\cC_r)$. This cycle is not homologous to any cycle in $\cC_{r^-}$ since it includes $\Delta$. Thus, to complete the proof all we have to show is that $\partial \Delta \in B_{k-1}(\cC_{r^-})$.

Recall that every $k$-simplex in the \cech complex $\cC_{r}$ is of the form $\sigma = [x_0,\ldots, x_k]$, where $x_i\in \cP$ and
\[
	B_r(x_0)\cap\cdots\cap B_r(x_k) \ne \emptyset.
\]
Since we only consider $k\le d$, we can embed every abstract $k$-simplex $\sigma\in \cC_r$ back into a $k$-simplex in $\R^d$, and denote its embedding by $\pi(\sigma)$. Setting $\cU_r := \cU(\cP,r)\subset\R^d$, there is a natural map from $C_k(\cC_r)$ - the group of simplicial chains - to $C_k(\cU_r)$ - the group of singular chains - that comes out from the Nerve Lemma \ref{lem:nerve} (using the embedding $\pi$).
Let $\imath_k:C_k(\cC_{r^-})\to C_k(\cU_{r^-})$ be the natural map between the simplicial and singular chains. The induced map $h_k:H_k(\cC_{r^-})\to H_k(\cU_{r^-})$ is an isomorphism by the Nerve Lemma \ref{lem:nerve}. If we can show that $h_{k-1}(\partial\Delta) = 0\in H_{k-1}(\cU_{r^-})$ then that will imply that $\partial\Delta = 0\in H_{k-1}(\cC_{r^-})$, and we are done. 

Finally, note that since we assume $A_\phi = A_\phi(\cY) \subset \cU_r$, and since $A_{2\phi}\subset A_{\phi}$, we can choose $r'$ to be close enough to $r$ so that $A_{2\phi} \subset \cU_{r'}$.
Our choice of $ \phi(\cY)$ implies that $\pi(\partial \Delta)\subset A_{2\phi}$, and therefore $\imath_{k-1}(\partial\Delta) \in C_{k-1}(A_{2\phi})$. Since $A_{2\phi}$ is a $d$-dimensional annulus, we have $H_{k-1}(A_{2\phi}) = 0$, and therefore $\imath_{k-1}(\partial\Delta) \in B_{k-1}(A_{2\phi})\subset B_{k-1}(\cU_{r^-})$.
Thus $h_{k-1}(\partial \Delta) = 0$, which completes the proof.
\end{proof}


Next, recall the definition of $\beta_k^\eps(r)$ in Section \ref{sec:theta_cycles}, and define -
\[
\begin{split}
	h_r^\eps(\cY) &:= (h_{r}(\cY)-h_{r'}(\cY))\ind\set{\phi(\cY) \ge \eps},\\
	g_r^\eps(\cY,\cP) &:= h_r^\eps(\cY)\ind\set{B_{r''}(C(\cY))\cap \cP_n = \cY}\ind\set{A_\eps(\cY) \subset \cU(\cP, R(\cY))},
	\end{split}
\]
where
\[
	\quad r' = r(1-\delta), \quad r'' = r(1+\sqrt{2\delta}), \quad \delta = \Lambda^{-2},
\]
and $h_r$ is defined in \eqref{eq:h_funcs}. Then we have
\[
	\beta_k^\eps(r) = \sum_{\cY\subset \cP_n} g_r^\eps(\cY,\cP_n).
\]


\begin{proof}[Proof of Lemma \ref{lem:theta_cycle_bound}]\ \\
We need to show that $\beta_k(r) \ge \beta_k^\eps(r)$. If $g^\eps_r(\cY,\cP_n) =1 $ then according to Lemma \ref{lem:theta_cycle} we have that $\cY$ creates a $\Theta$-cycle at radius $R(\cY) \in (r',r]$. To complete the proof we will show that this $\Theta$-cycle created prior to $r$ still exists at $r$ and therefore contributes to $\beta_k(r)$.

In the proof of Lemma \ref{lem:theta_cycle} we saw that when a $\Theta$-cycle is created  a  new $k$-simplex $\Delta$ is added to the complex. This simplex is ``isolated" in the sense that it is not a face of any $(k+1)$-simplex. As long as $\Delta$ remains isolated, the cycle it creates cannot become a boundary (a homologically trivial cycle). Thus, it remains to show that $g^\eps_r(\cY,\cP_n) =1$ implies that $\Delta$ is isolated at radius $r$.

For the $k$-simplex $\Delta$ generated by a set $\cY$ to be isolated at radius $r$, we need to verify that $\cap B_r(\cY) := \bigcap_{Y\in\cY}B_r(Y) $, will not intersect with any of the other balls of radius $r$. We will show in Lemma \ref{lem:intersect} that the distance between points in $\cap B_r(\cY)$ and $C(\cY)$ is bounded by $\sqrt{r^2-R^2(\cY)} \le \sqrt{r^2-r'^2} \le r\sqrt{2\delta}$. Thus, by the triangle inequality, if the ball of radius $r''=r(1+\sqrt{2\delta})$ around $C(\cY)$ contains only the points in $\cY$, there is no other ball of radius $r$ that intersects $\cap B_r(\cY)$. Since $g^\eps_r(\cY,\cP_n)=1$ requires that $B_{r''}(C(\cY))\cap \cP_n = \cY$ we completed the proof.

\end{proof}
%
%
%

\begin{lem}\label{lem:intersect}
For every $x\in\cap \B_r(\cY)$ we have $\norm{x-C(\cY)} \le \sqrt{r^2-R^2(\cY)}$.
\end{lem}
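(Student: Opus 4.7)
The plan is to exploit the fact that $c := C(\cY)$ is the circumcenter of $\cY$ (so every $Y\in\cY$ is at distance exactly $R := R(\cY)$ from $c$), together with the assumption from Definition~\ref{def:crit_pts} that $c \in \Delta(\cY)$, which means $c$ is an honest \emph{interior} convex combination of the points of $\cY$. These two ingredients together should make the bound essentially a one-line consequence of expanding squared distances.

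Concretely, I would first write, for each $Y\in\cY$, the identity
\[
\norm{x-Y}^2 \;=\; \norm{x-c}^2 + \norm{Y-c}^2 - 2\langle x-c,\,Y-c\rangle \;=\; \norm{x-c}^2 + R^2 - 2\langle x-c,\,Y-c\rangle.
\]
The assumption $x\in\bigcap_{Y\in\cY} B_r(Y)$ gives $\norm{x-Y}^2\le r^2$, which rearranges to
\[
2\langle x-c,\,Y-c\rangle \;\ge\; \norm{x-c}^2 + R^2 - r^2 \qquad \text{for every } Y\in\cY.
\]
Next, since $c\in\Delta(\cY)$, there exist coefficients $\alpha_Y>0$ with $\sum_{Y\in\cY}\alpha_Y=1$ and $c=\sum_{Y\in\cY}\alpha_Y Y$; equivalently, $\sum_{Y\in\cY}\alpha_Y(Y-c)=0$. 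Multiplying the previous inequality by $\alpha_Y$ and summing over $Y\in\cY$ collapses the left-hand side,
\[
0 \;=\; 2\Bigl\langle x-c,\,\sum_{Y\in\cY}\alpha_Y(Y-c)\Bigr\rangle \;\ge\; \bigl(\norm{x-c}^2 + R^2 - r^2\bigr)\sum_{Y\in\cY}\alpha_Y \;=\; \norm{x-c}^2 + R^2 - r^2,
\]
which rearranges to $\norm{x-c}^2 \le r^2-R^2$. Taking square roots (note $r\ge R$, since the intersection is nonempty) gives the claim.

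One small caveat worth flagging: on the torus we have to interpret distances via the local isometry used throughout Section~\ref{sec:torus_dist_fn}. Because $r\le r_{\max}=r_{\conv}/3$ and $R(\cY)\le r$, the ball $B_{3r}(c)$ embeds isometrically into $\R^d$, so both $\cY$ and any $x\in\bigcap_{Y\in\cY}B_r(Y)$ live inside a single Euclidean chart and the Euclidean argument above applies verbatim. I do not expect any real obstacle here; the only thing to be careful about is this appeal to the local isometry so that the inner-product manipulation is justified in the toroidal setting.
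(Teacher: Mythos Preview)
Your proof is correct and is essentially identical to the paper's own argument: both expand $\norm{x-Y}^2$ around $c$, use $\norm{x-Y}\le r$ and $\norm{Y-c}=R$, and then take a convex combination over $Y\in\cY$ using $c\in\Delta(\cY)$ to kill the inner-product term. Your added remark about working in a Euclidean chart via the local isometry is a reasonable extra justification that the paper leaves implicit.
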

\begin{proof}
Let $\cY = \set{x_0,\ldots,x_k}$,  $c = C(\cY)$, and $R=R(\cY)$, so that $\norm{c-x_i} = R$ for all $i$.  Since $c \in \Delta(\cY)$ we can write $c = \sum_{i=0}^k \alpha_i x_i$, where $\alpha_i\ge 0$ and  $\sum_{i=0}^k \alpha_i = 1$. Let $x \in \cap B_r(\cY)$ then $\norm{x-x_i} \le r$ for every $i$. Now,
\[
	\norm{x-x_i}^2 = \norm{x-c + c-x_i}^2 = \norm{x-c}^2 + \norm{c-x_i}^2 - 2\iprod{x-c,c-x_i},
\]
where $\iprod{\cdot,\cdot}$ is the inner product in $\R^d$. Thus, we conclude that for every $i$
\[
	\norm{x-c}^2 \le r^2 - R^2 +2\iprod{x-c,c-x_i}.
\]
Summing over all the points $x_i$, we have
\[
	\sum_{i=0}^k\alpha_i\norm{x-c}^2  \le \sum_{i=0}^k\alpha_i(r^2 - R^2) +2	\sum_{i=0}^k\alpha_i\iprod{x-c,c-x_i}.
\]
Since $\sum_i \alpha_i = 1$ and $\sum_i\alpha_ix_i = c$ we have
\[
	\norm{x-c}^2 \le r^2-R^2.
\]
\end{proof}

Next, we prove Lemma \ref{lem:theta_cycle_order}.


\begin{proof}[Proof of Lemma \ref{lem:theta_cycle_order}]\ \\
We need to show that $\mean{\beta_k^\eps(r)} \sim n\Lambda^{k-2}e^{-\Lambda}$.
We start by fixing $\eps>0$ and evaluating the expectation of $\beta_k^{\eps}(r)$, similarly to the computations in the proofs of Proposition \ref{prop:mean_C_k_r}.
\begin{equation}\label{eq:mean_C_k_th}
\begin{split}
	\mean{\beta_k^\eps(r)} &= \frac{n^{k+1}}{(k+1)!}\mean{g_r^\eps(\cY', \cY'\cup\cP_n)}\\
	&= \frac{n^{k+1}}{(k+1)!} \int_{(\T^d)^{k+1}} h_r^\eps(\bx) e^{-n \omega_d(r'')^d}p_{\eps}(\bx)d\bx,
\end{split}
\end{equation}
where
\[
p_{\eps}(\bx) := \cprob{A_\eps(\cY') \subset \cU(\cY'\cup\cP_n, R(\cY'))}{\cY'=\bx,\  \cP_n\cap B_{r''}(C(\bx)) = \emptyset}.
\]
We will first show that $p_\eps(\bx)\to 1$ uniformly for all $\bx$.
Denoting $\P_{_\emptyset}(\cdot) := \cprob{\cdot}{\cP_n\cap B_{r''}(C(\bx)) = \emptyset}$,  we observe that 
\[ 
p_{\eps}(\bx) \ge \P_{_{\emptyset}}(A_\eps(\bx) \subset \cU(\cP_n, R(\bx))).
\]
In the following we use the shorthand notation:
$$R=R(\bx), A_\eps = A_\eps(\bx) , B = B_{r''}(C(\bx)), p_{\eps} = p_{\eps}(\bx).$$
Let $\cS$ be a $(\eps R/2)$-net of $A_\eps$, i.e.~for every $x\in A_\eps$ there exists $s\in \cS$ such that $\norm{x-s}\le\eps R/2$. Note that there exists $c>0$ such that 
\[	
\abs{\cS} \le c\frac{\vol(A_\eps)}{(\eps R/2)^d} = c \frac{\omega_dR^d(1-\eps^d)}{(\eps R/2)^d} = c\omega_d (2/\eps)^d(1-\eps^d) =: c_1.
\]
By the triangle inequality, if for every $s\in \cS$ we have $\cP_n\cap B_{R(1-\eps/2)}(s)\ne \emptyset$,
then  $A_\eps\subset \cU(\cP_n, R)$.
Therefore, 
\[
	p_{\eps} \ge \P_{_\emptyset}({\forall s\in \cS : \cP_n\cap B_{R(1-\eps/2)}(s)\ne \emptyset}).
\]
Note that under $\P_{_\emptyset}$ we have already $B\cap \cP_n = \emptyset$. Thus, for any $s\in\cS$
\[
	\P_{_\emptyset}(\cP_n\cap B_{R(1-\eps/2)}(s) = \emptyset) = e^{-n \vol(B_{R(1-\eps/2)}(s)\backslash B)}.
\]
Recall that $r''$ is the radius of $B$, and notice that since $R>r'$, we have that $r'' / R < r''/r' \to 1$. In particular, for $n$ large enough we have $r'' < R(1+\eps/4)$ (for all $\bx$).
Since $s\in A_\eps$, then at least a fixed proportion of the volume of $B_{R(1-\eps/2)}(s)$ must  be outside $B$ such that
\[
	\vol(B_{R(1-\eps/2)}(s)\backslash B) \ge c_2 \omega_d R^d,
\]
for some $c_2\in(0,1)$. Therefore,
\[
\P_{_\emptyset}(\exists s\in \cS : \cP_n\cap B_{R(1-\eps/2)}(s)= \emptyset) \le \sum_{s\in\cS} \P_{_\emptyset}({\cP_n \cap B_{R(1-\eps/2)}(s) = \emptyset})\le c_1e^{- c_2n\omega_d R^d}.
\]
For every $\bx$ such that $h_r^\eps(\bx)\ne 0$ we have $R(\bx)> r'$, and therefore   $1-c_1e^{- c_2\omega_d n(r')^d}\le p_\eps(\bx)\le 1$, Since $\omega_dn(r')^d \to \infty$, we have $p_\eps(\bx)\to 1$ uniformly for every $\bx$. 
Going back to \eqref{eq:mean_C_k_th} we therefore have
\[
\mean{\beta_k^\eps(r)} \approx \frac{n^{k+1}}{(k+1)!} e^{-\omega_d n(r'')^d}\int_{(\T^d)^{k+1}} h_r^\eps(\bx) d\bx.
\]

Using the same change of variables we had in the proof of Proposition \ref{prop:mean_C_k_r}, similarly to \eqref{eq:mean_nk_2} we have
\[
	\meanx{\beta_k^\eps(r)} \approx  D^\eps_k ne^{-\omega_d n(r'')^d} \Lambda^k \int_{1-\delta}^{1}  \tau^{dk-1} d\tau,
\]
where 
\[
 D_k^\eps := \frac{1}{\omega_d^k(k+1)!}\int_{\theta,\gamma,\bth} h^\eps(0,\theta+\gamma\circ \bth) J(1,\theta,\gamma,\bth)  d\theta d\gamma d\bth,
\]
and $h^\eps := h(\cY)\ind\set{\phi(\cY) \ge \eps}$. Recall that the points in the last integral are arranged on a sphere of radius $1$ centered at $\theta$. Thus,  \eqref{eq:epsilon} turns into
\[
	\phi(0,\theta+\gamma\circ \bth) = \inf_{x\in \partial\Delta(0,\theta+\gamma\circ \bth)}\norm{x-\theta}.
\]
This function is continuous in $\theta,\gamma,\bth$, its minimum value is zero and it is achieved when the center of the sphere ($\theta$) lies on one of the faces of the simplex $\Delta(0,\theta+\gamma\circ \bth)$. This  set  has measure zero, and therefore there exists $\eps_k >0$ such that the set $\set{\theta,\gamma,\bth : \phi(0,\theta+\gamma\circ\bth)  > \eps_k}$ has a positive measure. Taking $\eps = \eps_k$ we have that $ D_k^\eps > 0$ and since $\int_{1-\delta}^{1}\tau^{dk-1}d\tau =  \delta+o(\delta)$ we have
\begin{equation}\label{eq:mean_theta_bound}
	\mean{\beta_k^\eps(r)} \approx D_k^\eps \delta  n \Lambda^{k} e^{- \omega_d n(r'')^d} .
\end{equation}
Recall that $\delta = \Lambda^{-2}$, then
\[
\begin{split}\omega_d n(r'')^d &= \Lambda(r''/r)^d = \Lambda\param{1+\sqrt{2\delta}}^d = \Lambda\param{1+d\sqrt{2\delta} + o\param{\sqrt{2\delta}}} = \Lambda + O(1).
\end{split}
\]
Therefore,
\[
	\mean{\beta_k^\eps(r)} \approx D_k^\eps   n \Lambda^{k-2} e^{- \Lambda - O(1)}.
\]
So 
\[
	\mean{\beta_k^\eps(r)} \sim n \Lambda^{k-2} e^{- \Lambda }.
\]
That completes the proof.
\end{proof}


We will end this section with the proof for Lemma \ref{lem:theta_cycle_vanish}.


\begin{proof}[Proof of Lemma \ref{lem:theta_cycle_vanish}]\ \\
From Lemma \ref{lem:theta_cycle_order} we have that if $\Lambda = \log n + (k-2)\log\log n -w(n)$ then $\mean{\beta_k^\eps(r)} \to \infty$. If we show in addition that $\var{\beta_k^\eps(r)} \ll {\mean{\beta_k^\eps(r)}}^2$, then using Chebyshev's inequality we have $\prob{\beta_k^\eps(r) > 0} \to 1$ (cf. Chapter 4 in \cite{alon_probabilistic_2004}).
We therefore need to find an upper bound on the variance. Recall that
\[
	\varx{\beta_k^\eps(r)} = \meanx{(\beta_k^\eps(r))^2} - \meanx{ \beta_k^\eps(r)}^2.
\]
Since $\beta_k^\eps = \sum_{\cY\subset\cP_n}g_r^{\eps}(\cY,\cP_n)$, we can write $(\beta_k^\eps(r))^2$ as
\[\begin{split}
	(\beta_k^\eps(r))^2 &= \sum_{\cY_1\subset\cP_n}\sum_{\cY_2\subset\cP_n} g^\eps_r(\cY_1,\cP_n) g^\eps_r(\cY_2, \cP_n) \\
	&= \sum_{j=0}^{k+1}\sum_{\abs{\cY_1\cap\cY_2} = j}g^\eps_r(\cY_1,\cP_n) g^\eps_r(\cY_2, \cP_n),
	\end{split}
\]	
where $\cY_1,\cY_2$ iterate over all subsets of $\cP_n$ with $(k+1)$ points.

Denoting the inner sum by $I_j$, observe that $I_{k+1} = \beta_k^\eps(r)$, and therefore we have
\[
	\varx{\beta_k^\eps(r)} = \meanx{\beta_k^\eps(r)} + \sum_{j=1}^k\mean{I_j} + (\mean{I_0} - \meanx{\beta_k^\eps(r)}^2).
\]
To complete the proof we therefore need to show that  $\mean{I_j}/ \meanx{\beta^\eps_k(r)}^2 \to 0$ for every $1\le j \le k$, as well as $(\mean{I_0} - \meanx{\beta_k^\eps(r)}^2)/\meanx{\beta^\eps_k(r)}^2 \to 0$.

For $1\le j \le k$, using Palm theory (see Corollary \ref{cor:prelim:palm2}) we have that
\[
	\mean{I_j} = \frac{n^{2k+2-j}}{j!((k+1-j)!)^2} \mean{g^\eps(\cY_1',\cY'\cup\cP_n) g^\eps_r(\cY_2', \cY'\cup \cP_n)},
\]
where $\cY_1',\cY_2'$ are sets of $k+1$ random variables,  $\abs{\cY_1'\cap\cY_2'} = j$,  such that $\cY' = \cY_1'\cup \cY_2'$ is a set of $(2k+2-j)$ $\iid$ random variables, uniformly distributed in $\T^d$, and independent of $\cP_n$. Since $r< r_{\max}$ we have,
\begin{equation}\label{eq:I_j_ineq}
	\mean{I_j} \le \frac{n^{2k+2-j}}{j!((k+1-j)!)^2}\int_{(\T^d)^{2k+2-j}} h_r^\eps(\bx_1)  h^\eps_r(\bx_2) e^{-nV(\bx_1, \bx_2)}d\bx,
\end{equation}
where
\[
\begin{split}
\bx &= (x_0,\ldots, x_{2k+1-j}) \in (\T^d)^{2k+2-j},\\
\bx_1 &= (x_0,\ldots, x_{k}) \in (\T^d)^{k+1},\\
\bx_2 &= (x_0,\ldots, x_{j-1}, x_{k+1},\ldots, x_{2k+1-j})\in (\T^d)^{k+1},\\
V(\bx_1, \bx_2) &= \vol(B(\bx_1)\cup B(\bx_2)).\\
\end{split}
\]
Note that \eqref{eq:I_j_ineq} is an inequality, since we dropped the term verifying the coverage of the $A_\eps$-s, as well as the term verifying that $\bx_1 \cap B(\bx_2)=\emptyset$ and vice versa (both terms are bounded by $1$).
Notice that 
\[
	V(\bx_1,\bx_2) = \omega_d R^d(\bx_1) + \omega_d R^d(\bx_2) - \vol(B(\bx_1)\cap B(\bx_2)).
\]
Let $V_r(\Delta)$ denote the volume of the intersection of two balls of radius $r$ such that their centers are at distance $\Delta$ from each other. Since $R(\bx_i) \le r$ we have
\[
	\vol(B(\bx_1)\cap B(\bx_2)) \le V_r(\norm{C(\bx_1)-C(\bx_2)}).
\]
In addition, we know that $R(\bx_i) \ge (1-\delta)r$, and therefore,
\[\begin{split}
\mean{I_j}&\le \frac{n^{2k+2-j}}{j!((k+1-j)!)^2}e^{-\Lambda(2(1-\delta)^d-1)}\int_{(\T^d)^{2k+2-j}} h_r^\eps(\bx_1)  h^\eps_r(\bx_2) e^{-\Lambda+ V_r(\norm{C(\bx_1)-C(\bx_2)})}d\bx\\
&\approx \frac{n^{2k+2-j}}{j!((k+1-j)!)^2}e^{-\Lambda}\int_{(\T^d)^{2k+2-j}} h_r^\eps(\bx_1)  h^\eps_r(\bx_2) e^{-\Lambda+ V_r(\norm{C(\bx_1)-C(\bx_2)})}d\bx,
\end{split}
\]
since $\delta = \Lambda^{-2}$.
Since the sets $\bx_1,\bx_2$ have at least one point in common ($x_0$), we can apply a similar change of variables to the one used in the proof of Proposition \ref{prop:mean_C_k_r} and have
\[
\mean{I_j} \le \frac{n^{2k+2-j}r^{d({2k+1-j})}}{j!((k+1-j)!)^2} e^{-\Lambda}\int_{(\R^d)^{2k+1-j}} h^\eps_1(0,\by_1) h^\eps_1(0,\by_2) e^{-\Lambda + nr^d V_1(\norm{C(0,\by_1)- C(0,\by_2)})}d\by,
\]
where
\[
\begin{split}
\by &= (y_1,\ldots, y_{2k+1-j}) \in (\R^d)^{2k+1-j},\\
\by_1 &= (y_1,\ldots, y_{k}) \in (\R^d)^{k},\\
\by_2 &= (y_1,\ldots, y_{j-1}, y_{k+1},\ldots, y_{2k+1-j})\in (\R^d)^{k}.
\end{split}
\]
Let $S := \set{\by: h_1^\eps(0,\by_1) = h_1^\eps(0,\by_2) = 1}$, then $S$ is bounded and we can write
\begin{equation}\label{eq:bound_S}
\mean{I_j} \le c_1 n\Lambda^{2k+1-j} e^{-\Lambda}\int_{S} e^{-\Lambda + nr^d V_1(\norm{C(0,\by_1)- C(0,\by_2)})}d\by,
\end{equation}
for some constant $c_1>0$.
Next, fix $\alpha >0$, and define 
\[
\begin{split}
S_1 &= \set{\by\in S: \norm{C(0,\by_1)- C(0,\by_2)} \le \alpha},\\
S_2 &= S\backslash S_1 = \set{\by\in S: \norm{C(0,\by_1)- C(0,\by_2)} >\alpha}.
\end{split}
\]
We will bound the integral on both sets separately. 
On the set $S_1$, note first that $e^{-\Lambda + nr^d V_1(\norm{C(0,\by_1)- C(0,\by_2)})} \le 1$, and therefore
\[\begin{split}
&\int_{S_1} e^{-\Lambda + nr^d V_1(\norm{C(0,\by_1)- C(0,\by_2)})}d\by \\
&\quad\le \int_{\by_1} h_1^\eps(0,\by_1)\int_{\by_2\backslash \by_1} h_1^\eps(0,\by_2)\ind\set{\norm{C(0,\by_1)-C(0,\by_2)}\le\alpha}d\by.
\end{split}
\]
For a fixed $\by_1$, if $\norm{C(0,\by_1)-C(0,\by_2)} \le \alpha$, and $R(0,\by_2) \in (1-\delta,1)$ then for every $y\in \by_2\backslash \by_1$ we have
\[
	1-\delta-\alpha \le \norm{y-C(0,\by_1)} \le 1+\alpha.
\] 
Thus, the $(k+1-j)$ points in $\by_2\backslash\by_1$ lie in a thin annulus, and we have
\[\begin{split}
\int_{\by_2\backslash \by_1} h_1^\eps(0,\by_2)\ind\set{\norm{C(0,\by_1)-C(0,\by_2)}\le\alpha}d(\by_2\backslash \by_1) &\le \param{\omega_d((1+\alpha)^d - (1-\delta-\alpha)^d)}^{k+1-j}\\
&\approx (\omega_d(2d\alpha+d\delta))^{k+1-j},
\end{split}
\]
where we used the approximation $(1+x)^d \approx 1+dx$ (which will be justified later when we take $\alpha\to 0$). Since $R(0,\by_1)\in (1-\delta,1)$ we have that $\int_{\by_1} h_1^\eps(0,\by_1) =O(\delta) = O(\Lambda^{-2})$ and therefore,
\begin{equation}\label{eq:bound_S_1}
\int_{S_1} e^{-\Lambda + nr^d V_1(\norm{C(0,\by_1)- C(0,\by_2)})}d\by \le c_2\Lambda^{-2} (2d\alpha+d\delta)^{k+1-j},
\end{equation}
for some constant $c_2>0$.

On $S_2$ we use the fact that (see Appendix \ref{sec:vol_intersection}):
\begin{equation}\label{eq:vol_intersection}
V_1(\Delta) = \omega_d - \omega_{d-1}\Delta + o(\Delta),
\end{equation}
and 
together with the fact $V_1$ is decreasing in $\Delta$, we have
\begin{equation}\label{eq:bound_S_2}
\int_{S_2} e^{-\Lambda + nr^d V_1(\norm{C(0,\by_1)- C(0,\by_2)})}d\by \le \vol(S_2) e^{-\omega_{d-1}nr^d\alpha +o(nr^d\alpha)} \le \vol(S_2) e^{-c_3\Lambda\alpha},
\end{equation}
for some $c_3>0$.
Combining \eqref{eq:bound_S}, \eqref{eq:bound_S_1} and \eqref{eq:bound_S_2}, we have that
\[
	\mean{I_j}\le c_4 n\Lambda^{2k+1-j} e^{-\Lambda} (\Lambda^{-2}(2\alpha+\delta)^{k+1-j} + e^{-c_3\Lambda\alpha}),
\]
for some constant $c_4 > 0$.
From Lemma  \ref{lem:theta_cycle_order} we have that
\[
	\meanx{\beta_k^\eps(r)} \sim n\Lambda^{k-2} e^{-\Lambda},
\]
and therefore
\[
	\frac{\mean{I_j}}{\meanx{\beta_k^\eps(r)}^2} \le c_4 n^{-1}\Lambda^{5-j}e^\Lambda (\Lambda^{-2}(2\alpha+\delta)^{k+1-j} + e^{-c_3\Lambda\alpha}).
\]
For $\Lambda = \log n + (k-2)\log\log n -w(n)$ we have
\[
	\frac{\mean{I_j}}{\meanx{\beta_k^\eps(r)}^2} \le c_5 e^{-w(n)}\param{(\log n)^{k+1-j} (2\alpha+\delta)^{k+1-j} + (\log n)^{k+3-j}e^{-c_3\log(n)\alpha}},
\]
for some $c_5>0$. Taking
\[
	\alpha = \frac{k+3-j}{c_3}\frac{\log \log n}{\log n},
\]
and assuming that $w(n) \gg \log\log\log n$ we have
\[
\limninf\frac{\mean{I_j}}{\meanx{\beta_k^\eps(r)}^2} = 0.
\]

To complete the proof, it remains to show that
\[
	\frac{\mean{I_0} - \meanx{\beta_k^\eps(r)}^2}{\meanx{\beta_k^\eps(r)}^2} \to 0.
\]
Using Palm theory, we have that
\[
	\mean{I_0} = \frac{n^{2k+2}}{((k+1)!)^2}\mean{g^\eps_r(\cY_1',\cY'\cup \cP_n)g^\eps_r(\cY_2',\cY'\cup \cP_n)},
\]
where $\cY_1',\cY_2'$ are two disjoint sets of $k+1$ $\iid$ random variables, uniformly distributed in $\T^d$, independent of $\cP_n$, and $\cY' = \cY_1'\cup \cY_2'$. On the other hand, we can rewrite $\meanx{\beta_k^\eps(r)}^2$ as
\[
	\meanx{\beta_k^\eps(r)}^2 = \frac{n^{2k+2}}{((k+1)!)^2} \mean{g^\eps_r(\cY_1',\cY_1'\cup\cP_n)g^\eps_r(\cY_2',\cY_2'\cup \cP_n')},
\]
where $\cY_1',\cY_2'$ are the same as above, and $\cP_n'$ is an independent copy of $\cP_n$. Next, define
\[
\Delta g := {g^\eps_r(\cY_1',\cY' \cup \cP_n)g^\eps_r(\cY_2',\cY' \cup \cP_n)-g^\eps_r(\cY_1',\cY_1'\cup \cP_n)g^\eps_r(\cY_2',\cY_2'\cup \cP_n')}
\]
then
\[
\mean{I_0} - \meanx{\beta^\eps_k(r)}^2 =  \frac{n^{2k+2}}{((k+1)!)^2} \mean{\Delta g}.
\]
We can split $\Delta g$ into two terms -
\[
\Delta_1 := \Delta g\cdot\ind\set{B(\cY_1')\cap B(\cY_2') = \emptyset}
\qquad
\Delta_2 := \Delta g\cdot \ind\set{B(\cY_1')\cap B(\cY_2') \ne \emptyset}.
\]
First, we show that $\mean{\Delta_1} = 0$. Observe that 
\begin{align*}
\Delta_1 &= {h}^\eps_r(\cY_1'){h}^\eps_r(\cY_2')\ind\set{B(\cY_1')\cap B(\cY_2')=\emptyset}\\
&\qquad \times \Big(\ind\set{\cP_n \cap B(\cY_1') = \emptyset}\ind\set{\cP_n \cap B(\cY_2') = \emptyset} \\
&\qquad\qquad - \ind\set{\cP_n \cap B(\cY_1') = \emptyset}\ind\set{\cP_n' \cap B(\cY_2') = \emptyset}\Big).
\end{align*}
If $\Delta_1 \ne 0$, then $B(\cY_1')$ and $B(\cY_2')$ must be disjoint. Therefore, given $\cY_1'$ and  $\cY_2'$, the set $\cP_n\cap B(\cY_2')$ is independent of the set $\cP_n\cap B(\cY_1')$ (by the spatial independence of the Poisson process), and has the same distribution as $\cP_n'\cap B(\cY_2')$. Thus, $\cmean{\Delta_1}{\cY_1',\cY_2'} = 0$, for any $\cY_1,\cY_2$. This implies that $\mean{\Delta_1} = 0$.
For $\Delta_2$, notice that
\[
	\Delta_2 \le g^\eps_r(\cY_1',\cY_1' \cup \cP_n)g^\eps_r(\cY_2',\cY_2' \cup \cP_n) \ind\set{B(\cY_1')\cap B(\cY_2') \ne \emptyset},
\]
and then
\[
	\mean{\Delta_2} \le \int_{(\T^d)^{2k+2}} {h}^\eps_r(\bx_1){h}^\eps_r(\bx_2) e^{-n V(\bx_1,\bx_2)}  \ind\set{B(\bx_1)\cap B(\bx_2) \ne \emptyset}d\bx_1 d\bx_2,
\]
where $\bx_1 = (x_0,\ldots,x_{k}), \bx_2 = (x_{k+1},\ldots, x_{2k+1})$. 
Note that here the sets $\bx_1,\bx_2$ do not share any point in common. However, since the balls intersect, the points have to be very close to each other, and  in particular to $x_0$. Thus, we can apply similar arguments to the ones we used for $I_j$ that to show that
\[
	\limninf\frac{\mean{I_0} - \meanx{\beta_k^\eps(r)}^2}{\meanx{\beta_k^\eps(r)}^2} = 0.
\]
That completes the proof.

\end{proof}


\section{Riemannian Manifolds}\label{sec:riemannian}


The arguments in this paper should extend to all compact Riemannian manifolds.
In this section we wish to sketch a heuristic argument for this.  We hope to give a detailed treatment (with more effective results) elsewhere.

For flat tori, we made use of the fact that for $r$ sufficiently small, the $r$-balls are all (1) embedded (i.e.~are topological balls) (2) flat  (isometric to Euclidean balls), and when a sufficiently large number of points are sampled uniformly - (3) all the critical points of the distance function are associated with points within distance $r$ of one another.  We have also used the fact that (4) intersections of balls at small scales are either empty or contractible.

For general compact Riemannian manifolds (1) and (3) are both true and straightforward.  (4) is also true when $r$ is smaller than the convexity radius. However, (2) is not true in general, because the curvature might be nonzero. 

However, the problems that we are considering here are asymptotic and scale invariant. 
This formally implies that flatness is irrelevant; curvature can always be assumed to be arbitrarily small.

More explicitly, scale $r$ on the Riemannian manifold $(\cM,g)$ is the same as scale $2r$ on the manifold $(\cM, 2g)$ whose curvature tensor is multiplied by $1/4$.  Moreover, the covariant derivatives of the curvature tensor are multiplied by even smaller negative powers of $2$.  As a result, by working with a small enough $r$, we can arrange (in our original manifold) for the set of $r$-balls to be a precompact set in any fixed $C^k$-topology (given by a bound on the $(k+1)$-st derivatives of the metric) in a small neighborhood of the flat $g_{i,j} = \delta_{i,j}$ metric.  A continuity argument then gives that for $n$ large enough (so that all the critical points are at distance smaller than $r$) all of the integrals involved in the computations of expected values and variances are as close as desired to the flat values.  The arguments of the paper then directly apply.


\section{Conclusion}\label{sec:conclusion}


In this paper we studied the phase transition describing the vanishing of homology in random \cech complexes constructed over the $d$-dimensional torus.
We saw that, as opposed to other types of random complexes, for the \cech complex there exist only two sharp phase transition - one for $H_0$ and one for the rest of the homology groups $H_k$, $k\ge 1$. 

A possible explanation for the difference between connectivity and higher-order homology is the following. To show that the \cech complex is connected, it is enough to show that there are no isolated points (See \cite{penrose_random_2003}). A point in the \cech complex is isolated if and only if the ball of radius $2r$ around it is empty (i.e. containing no other points from $\cP_n$). On the other hand, as we have seen in this paper, the vanishing of the rest of the homology groups is related to critical points, and a critical point with value $r$ requires that the ball of radius $r$ around it is empty.
In other words, connectivity is related to empty balls of radius $2r$ whereas higher homology is related to empty balls of radius $r$. Heuristically, this should imply that the connectivity radius should be half the radius needed for the vanishing of the higher homology. In fact the difference between $\Lambda_0 = 2^{-d}\log n$ and $\Lambda_k \approx \log n$ describes exactly that (recall that $\Lambda = \omega_d nr^d$).

In addition to the sharp phase transition at $\Lambda = \log n$ we proved that there is a  lower order scale such that the different homology groups vanish in an ascending order. The results in this paper show that for $H_k$ the vanishing threshold is in the range $[\log n + (k-2)\log\log , \log n + k\log\log n]$, and it remains a future work to determine the exact vanishing point within this range.


\section*{Acknowledgements}

We are very grateful to Robert Adler, Yuliy Baryshnikov, Fr{\'e}d{\'e}ric Chazal, Yogeshwaran Dhandapani, Matthew Kahle, Sayan Mukherjee, and Steve Smale for many helpful conversations about this and related work. We would also like to thank the anonymous referees for their useful comments.


\newpage
\section*{Appendix}


\appendix


\section{Palm Theory for Poisson Processes}


The following theorem will be very useful when computing expectations related to Poisson processes. 


\begin{thm}[Palm theory for Poisson processes]
\label{thm:prelim:palm}
Let $(X,\rho)$ be a metric space, $f:X\to\R$ be a probability density on $X$, and let $\cP_n$ be a Poisson process on $X$ with intensity $\lambda_n = n f$.
Let $h(\cY,\cX)$ be a measurable function defined for all finite subsets $\cY \subset \cX \subset X^d$  with $\abs{\cY} = k$. Then
\[
    \E\Big\{\sum_{ \cY \subset \cP_n}
    h(\cY,\cP_n)\Big\} = \frac{n^k}{k!} \mean{h(\cY',\cY' \cup \cP_n)}
\]
where $\cY'$ is a set of $k$ $\iid$ points in $X$ with density $f$, independent of $\cP_n$.
\end{thm}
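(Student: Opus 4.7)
The plan is to prove this classical identity (often called the Slivnyak--Mecke formula) by conditioning on the total number of points in $\cP_n$ and using the exchangeability of the underlying iid sample. First I would write $N := \abs{\cP_n} \sim \pois{n}$ and, conditional on $N = m$, represent $\cP_n = \{X_1, \ldots, X_m\}$ where $X_1,\ldots,X_m$ are iid with density $f$. Then
\[
\cmean{\sum_{\cY \subset \cP_n,\ \abs{\cY}=k} h(\cY,\cP_n)}{N=m} \;=\; \sum_{1\le i_1<\cdots<i_k\le m} \mean{h(\{X_{i_1},\ldots,X_{i_k}\},\{X_1,\ldots,X_m\})},
\]
and by exchangeability each of the $\binom{m}{k}$ terms contributes the same expectation, namely $\mean{h(\{X_1,\ldots,X_k\},\{X_1,\ldots,X_m\})}$.

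Next I would integrate out $N$ using $\prob{N=m}=e^{-n}n^m/m!$, substitute $j=m-k$, and observe that the combinatorial factor $\binom{m}{k}$ collapses against $1/m!$ to produce
\[
\mean{\sum_{\cY \subset \cP_n} h(\cY,\cP_n)} \;=\; \frac{n^k}{k!}\sum_{j=0}^\infty \frac{e^{-n}n^j}{j!}\,\mean{h(\{X_1,\ldots,X_k\},\{X_1,\ldots,X_{k+j}\})}.
\]

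Finally I would reassemble the inner sum into a Poisson expectation: conditional on $\cY' := \{X_1,\ldots,X_k\}$, the points $X_{k+1},X_{k+2},\ldots$ are iid with density $f$ and independent of $\cY'$, so if $N'\sim\pois{n}$ is independent of everything else, the set $\{X_{k+1},\ldots,X_{k+N'}\}$ has the law of a Poisson process with intensity $nf$ and is independent of $\cY'$. Averaging the weighted sum over $j$ is exactly averaging over $N'$, which converts the right-hand side into $\frac{n^k}{k!}\mean{h(\cY',\cY'\cup\cP_n)}$ as required. The only subtle point is this last reassembly step -- justifying that the Poissonization of the ``extra'' points produces a genuine independent copy of $\cP_n$ -- but this is immediate from the Poisson distributional identity. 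The interchange of sum and expectation throughout is justified by assuming $h\ge 0$ (general $h$ follows by decomposing into positive and negative parts) and invoking Tonelli.
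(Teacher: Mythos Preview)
Your argument is correct and is the standard conditioning-and-reassembly proof of the Slivnyak--Mecke formula. The paper does not actually prove this theorem; it simply cites Penrose's monograph \cite{penrose_random_2003}, where essentially the same argument appears.
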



For a proof of Theorem \ref{thm:prelim:palm}, see for example \cite{penrose_random_2003}.
We shall also need the following corollary, which treats second moments:


\begin{cor}\label{cor:prelim:palm2}
With the notation above, assuming $\abs{\cY_1} = \abs{\cY_2} = k$,
\[
    \E\Big\{\sum_{ \substack {
                    \cY_1 ,\cY_2\subset \cP_n  \\
                    \abs{\cY_1 \cap \cY_2} = j }}
    h(\cY_1,\cP_n)h(\cY_2,\cP_n)\Big\} = {\frac{n^{2k-j}}{j!((k-j)!)^2}} \mean{h(\cY_1',\cY' \cup \cP_n)h(\cY_2',\cY' \cup \cP_n)}
\]
where $\cY' = \cY'_1 \cup \cY'_2$ is a set of $2k-j$ $\iid$ points in $X$ with density $f$, independent of $\cP_n$, and $\abs{\cY_1'\cap\cY_2'} = j$.
\end{cor}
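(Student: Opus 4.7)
The plan is to reduce the second-moment identity to the first-moment Palm formula (Theorem \ref{thm:prelim:palm}) by grouping the pairs $(\cY_1,\cY_2)$ according to their union. The key observation is that if $|\cY_1|=|\cY_2|=k$ and $|\cY_1\cap\cY_2|=j$, then the union $\cZ := \cY_1\cup\cY_2$ is a subset of $\cP_n$ of cardinality $2k-j$. So the double sum can be rewritten as a single sum over $(2k-j)$-subsets of $\cP_n$, weighted by an inner combinatorial sum over decompositions.

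Concretely, I would define
\[
    \tilde h(\cZ,\cX) \; := \sum_{\substack{\cY_1,\cY_2\subset\cZ\\\cY_1\cup\cY_2=\cZ,\; |\cY_1|=|\cY_2|=k\\|\cY_1\cap\cY_2|=j}} h(\cY_1,\cX)\,h(\cY_2,\cX),
\]
so that the quantity whose expectation we want equals $\sum_{\cZ\subset\cP_n,\,|\cZ|=2k-j}\tilde h(\cZ,\cP_n)$. Applying Theorem \ref{thm:prelim:palm} with $k$ replaced by $2k-j$ gives
\[
    \E\Big\{\sum_{\cZ\subset\cP_n}\tilde h(\cZ,\cP_n)\Big\} = \frac{n^{2k-j}}{(2k-j)!}\,\E\{\tilde h(\cZ',\cZ'\cup\cP_n)\},
\]
where $\cZ'$ is a set of $2k-j$ i.i.d.\ points with density $f$, independent of $\cP_n$.

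It remains to unpack $\E\{\tilde h(\cZ',\cZ'\cup\cP_n)\}$. For any fixed $(2k-j)$-set, the number of ordered pairs $(\cY_1,\cY_2)$ of $k$-subsets with $|\cY_1\cap\cY_2|=j$ and $\cY_1\cup\cY_2=\cZ'$ equals
\[
    \binom{2k-j}{j}\binom{2k-2j}{k-j} = \frac{(2k-j)!}{j!\,((k-j)!)^2},
\]
obtained by choosing the $j$ common points, then splitting the remaining $2k-2j$ points evenly between $\cY_1\setminus\cY_2$ and $\cY_2\setminus\cY_1$. Because the $2k-j$ points in $\cZ'$ are i.i.d., every such decomposition produces a summand with the same expectation, namely $\E\{h(\cY_1',\cY'\cup\cP_n)h(\cY_2',\cY'\cup\cP_n)\}$ for any single fixed choice of $(\cY_1',\cY_2')$ with the required intersection pattern. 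Multiplying the counting factor with the prefactor $n^{2k-j}/(2k-j)!$ and cancelling the $(2k-j)!$ yields the claimed constant $n^{2k-j}/(j!\,((k-j)!)^2)$.

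The argument has no real analytic content beyond Theorem \ref{thm:prelim:palm}; the only delicate step is the bookkeeping in the combinatorial count and the appeal to exchangeability of the i.i.d.\ points in $\cZ'$. I would therefore flag that as the main (though mild) point to be careful about, and would state the formula for ordered pairs so as to avoid a factor-of-$2$ ambiguity between the cases $\cY_1=\cY_2$ (only possible when $j=k$) and $\cY_1\neq\cY_2$.
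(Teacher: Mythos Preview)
Your argument is correct and is the standard way to deduce the second-moment Palm identity from the first-moment version: rewrite the double sum as a single sum over the $(2k-j)$-point union, apply Theorem~\ref{thm:prelim:palm}, and then count the $\binom{2k-j}{j}\binom{2k-2j}{k-j}=(2k-j)!/\bigl(j!\,((k-j)!)^2\bigr)$ ordered decompositions, using exchangeability of the i.i.d.\ points to equate all summands. The paper itself does not give a proof of this corollary but simply refers the reader to~\cite{bobrowski_distance_2014}; your derivation is exactly the kind of argument one finds there, so there is nothing to compare.
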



For a proof of this corollary, see for example \cite{bobrowski_distance_2014}.


\section{Coverage threshold}\label{sec:appendix_coverage}


The work in \cite{flatto_random_1977} studied the number randomly placed balls of radius $r$ needed in order  to cover each point in a compact Riemannian manifold $m$ times. Translating the results in \cite{flatto_random_1977} for $m=1$ using the notation in this paper, we have the following.
Let $N_r$ be the number of balls of radius $r$ required to cover a manifold $\cM$ (assuming that the centers are uniformly distributed in $\cM$). Let $\alpha = \omega_d r^d = n^{-1}\Lambda$, and define
\[
	X_r = \alpha N_r - \log(\alpha^{-1})-d\log\log(\alpha^{-1}).
\]


\begin{thm}[Theorem 1.1 in \cite{flatto_random_1977}]\label{thm:flatto}
There exists $r_1 > 0$ and $C>0$ such that
\begin{align}
\label{eq:coverage_th_1}\prob{X_r > x} \le Ce^{-x/8},\quad \forall x\ge 0, r\le r_1,\\
\label{eq:coverage_th_2}\prob{X_r < x} \le Ce^x, \quad \forall x\le 0, r\le r_1.
\end{align}
\end{thm}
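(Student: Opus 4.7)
The plan is to reduce both estimates to the probability that $\cM$ fails to be covered by $n$ uniform random balls of radius $r$, since $\{N_r > n\}$ is precisely the event that the vacant set $T_n := \cM \setminus \bigcup_{i=1}^n B_r(X_i)$ is nonempty.  Setting $n = \lceil\alpha^{-1}(L + d\log L + x)\rceil$ with $L := \log(1/\alpha)$ for the upper tail (and the corresponding floor for the lower tail), the inequalities \eqref{eq:coverage_th_1}--\eqref{eq:coverage_th_2} translate into upper bounds on $\prob{T_n \ne \emptyset}$ and $\prob{T_n = \emptyset}$, respectively.  Since $T_n$ is open, $T_n = \emptyset$ iff $|T_n|=0$, so both tails concern the random Lebesgue measure $|T_n|$.

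For the upper tail I would use a discretisation argument.  Let $\cS \subset \cM$ be an $\eta$-net with $|\cS|\le C_1\eta^{-d}$; any $y\in T_n$ has a neighbour $s\in\cS$ with $\dist(s,y)\le\eta$, forcing all $n$ centers out of $B_{r-\eta}(s)$, so a union bound gives
\[
\prob{T_n \ne \emptyset}\le C_1\eta^{-d}\exp\!\param{-n\omega_d(r-\eta)^d}.
\]
Choosing $\eta = \kappa r/L$ and expanding $(r-\eta)^d = r^d(1-d\kappa/L + O(L^{-2}))$, the exponent becomes $-n\alpha + d\kappa + O(L^{-1})$; meanwhile $\eta^{-d}\alpha^{-1}$ contributes a factor $(L/\kappa)^d\omega_d$, which cancels the $L^{-d}$ arising from $e^{-d\log L}$ inside $e^{-n\alpha}$.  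What remains is a bound $C\kappa^{-d}e^{d\kappa-x}$, which is in fact $\le Ce^{-x}$, considerably sharper than the claimed $Ce^{-x/8}$; the factor $1/8$ simply absorbs slack from less optimised choices of $\eta$ in the original proof.

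For the lower tail I would use a second-moment argument on $|T_n|$.  Normalising $\vol(\cM)=1$ one has $\E|T_n| = (1-\alpha)^n\sim e^{-n\alpha}$, while writing $V(x,y) := \vol(B_r(x)\cup B_r(y)) = 2\alpha - v(\dist(x,y))$ with $v(\rho)$ the Euclidean ball-intersection volume, the identity
\[
\E[|T_n|^2] = \int_{\cM\times\cM} \param{1-V(x,y)}^n dx\,dy
\]
splits into a far-pairs contribution ($\rho\ge 2r$) that is $(1+o(1))(\E|T_n|)^2$, and a close-pairs contribution; using the linearisation $v(\rho)\sim \alpha - c\rho\, r^{d-1}$ near $\rho=0$ and the substitution $u = nc\rho r^{d-1}$, the close-pairs part is bounded by $O(\alpha L^{-d}e^{-n\alpha})$, since $(ncr^{d-1})^{-d}\sim \alpha/L^d$.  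Hence $\varx{|T_n|} = O(\alpha L^{-d}e^{-n\alpha})$, and Chebyshev yields
\[
\prob{|T_n|=0}\le \frac{\varx{|T_n|}}{(\E|T_n|)^2} = O(\alpha L^{-d}e^{n\alpha}) = O(e^x),
\]
after substituting $n\alpha = L + d\log L + x$.

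The main obstacle is the close-pair variance estimate: a naive bound using $V(x,y)\ge\alpha$ with the crude pair-volume $(2r)^d$ would lose a spurious $L^d$ factor, so one must exploit the precise linear-in-$\rho$ behaviour of $\alpha - v(\rho)$ and a change of variables $u = n\rho\,r^{d-1}$ to gain the saving $n^{-d}\sim (\alpha/L)^d$ in the $\rho$-integral.  A secondary point is transferring the Euclidean computation of $V(x,y)$ to a general closed Riemannian manifold: normal coordinates give the same volume formulae up to curvature corrections of order $r^2$, which are negligible against the $O(L^{-1})$ scales appearing in the argument, so the bounds are uniform for all $r\le r_1$ once $r_1$ is sufficiently small.
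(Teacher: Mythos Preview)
The paper does not prove Theorem~\ref{thm:flatto}; it is quoted directly from Flatto and Newman~\cite{flatto_random_1977} (as the theorem's label ``Theorem 1.1 in~\cite{flatto_random_1977}'' indicates) and is used only as a black box in the appendix to derive Corollary~\ref{cor:coverage}. There is therefore no proof in the present paper to compare your proposal against.

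That said, your sketch is essentially the classical first- and second-moment approach to coverage, which is the method of~\cite{flatto_random_1977} as well (and of Hall~\cite{hall_introduction_1988}): a net plus union bound for the upper tail, and Chebyshev on the vacant volume for the lower tail. One small correction on the upper-tail side: the second-order term you suppress in the exponent is $n\alpha\cdot d\kappa/L \approx d\kappa(1 + x/L)$, and the $x/L$ contribution is \emph{not} negligible when $x$ is large relative to $L=\log(1/\alpha)$. With $r\le r_1$ chosen so that $d\kappa/L \le 7/8$, the bound you actually obtain is $C e^{-x/8}$; you do not get $Ce^{-x}$ uniformly in $x\ge 0$ and $r\le r_1$ from this net argument. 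So the factor $1/8$ is not merely absorbed slack from a suboptimal $\eta$---it is exactly the price of the $(1-\eta/r)^d$ loss when $x\gg L$. Your lower-tail variance computation and the use of the linearisation $V_1(\rho)=\omega_d-\omega_{d-1}\rho+o(\rho)$ (which the paper records in Appendix~\ref{sec:vol_intersection}) are on target.
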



We wish to use this result to prove the following phase transition.


\begin{cor}\label{cor:coverage}
Let $\cM$ be a compact Riemannian manifold, let $\cP_n$ be a homogeneous Poisson process on $\cM$ with intensity $n$, and let $w(n)\to\infty$.
\begin{enumerate}
\item If $\Lambda = \log n + (d-1)\log\log n + w(n)$ then
\[
		\cM \subset \cU(\cP_n,r)\quad w.h.p.
\]
\item If $\Lambda = \log n + (d-1)\log\log n - w(n)$ then
\[
		\cM \not\subset \cU(\cP_n,r)\quad w.h.p.
\]
\end{enumerate}
\end{cor}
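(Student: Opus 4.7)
My plan is to reduce the statement about the Poisson process to the ball-by-ball coverage statement of Theorem \ref{thm:flatto} by a simple coupling. Writing $\cP_n = \{X_1,\ldots,X_N\}$ with $X_i$ i.i.d.\ uniform on $\cM$ and $N\sim\pois{n}$ independent, as in \eqref{eq:def_pois}, and letting $N_r := \min\bigl\{k : \cM \subset \bigcup_{i=1}^k B_r(X_i)\bigr\}$ be the first-coverage time for the same sequence $X_1,X_2,\ldots$, we get the tautology $\{\cM\subset\cU(\cP_n,r)\} = \{N_r \le N\}$. Thus the whole proof reduces to comparing the random variables $N_r$ and $N$.

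The second step is an asymptotic bookkeeping to identify the centering. Set $\alpha = \omega_d r^d = \Lambda/n$. In both regimes we have $\Lambda = (1+o(1))\log n$, so that $\log\Lambda = \log\log n + O(1)$, and therefore
\[
\log(\alpha^{-1}) + d\log\log(\alpha^{-1}) = (\log n - \log\Lambda) + d\log(\log n - \log\Lambda) = \log n + (d-1)\log\log n + O(1),
\]
which is exactly the scale appearing in the corollary. Meanwhile, standard Poisson concentration gives $\alpha N = \Lambda + \alpha(N-n) = \Lambda + O_P(\alpha\sqrt n) = \Lambda + o_P(1)$, since $\alpha\sqrt n = \Lambda/\sqrt n \to 0$.

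To finish, I would apply the two tails of Theorem \ref{thm:flatto} with $x = \pm w(n)/2$. In the supercritical regime $\Lambda = \log n + (d-1)\log\log n + w(n)$, tail \eqref{eq:coverage_th_1} with $x = w(n)/2$ gives $X_r \le w(n)/2$ with probability at least $1 - Ce^{-w(n)/16} \to 1$, hence $\alpha N_r \le \log n + (d-1)\log\log n + w(n)/2 + O(1)$; this is strictly less than $\alpha N = \Lambda - o_P(1)$ (with a diverging gap $w(n)/2 - O(1)$), so $N_r \le N$ and hence coverage holds w.h.p. In the subcritical regime $\Lambda = \log n + (d-1)\log\log n - w(n)$, tail \eqref{eq:coverage_th_2} with $x = -w(n)/2$ gives $X_r \ge -w(n)/2$ with probability at least $1 - Ce^{-w(n)/2}\to 1$, hence $\alpha N_r \ge \log n + (d-1)\log\log n - w(n)/2 + O(1)$, which exceeds $\alpha N$ with a diverging gap, so $N_r > N$ and coverage fails w.h.p. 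The argument is essentially a translation; the only point requiring care is that the $O(1)$ slack coming from the expansion of $\log\log(\alpha^{-1})$ must be absorbed by the diverging $w(n)$, but this is automatic. I do not foresee any substantial obstacle.
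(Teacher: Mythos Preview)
Your argument is correct and is essentially the same as the paper's, just packaged more cleanly. The paper fixes a deterministic cutoff $\tilde n = n \mp n^q$, conditions on $\{|\cP_n| \gtrless \tilde n\}$, and applies Theorem~\ref{thm:flatto} to the fixed-size sample $\cX_{\tilde n}$, handling the Poisson fluctuation by Chebyshev; this is exactly your inclusion $\{N_r > N\}\subset\{N_r>\tilde n\}\cup\{N<\tilde n\}$ made explicit. Your direct coupling $\{\cM\subset\cU(\cP_n,r)\}=\{N_r\le N\}$ together with $\alpha(N-n)=o_P(1)$ avoids introducing $\tilde n$ and the exponent $q$, which is a mild simplification but not a different idea.

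Two small remarks. First, your claim $\Lambda=(1+o(1))\log n$ tacitly assumes $w(n)=o(\log n)$; the paper's proof has the same implicit assumption. It is harmless by monotonicity of coverage in $r$ (for part~1 you may decrease $w(n)$, for part~2 you may replace $w(n)$ by $\min(w(n),\sqrt{\log n})$ and use that a smaller $r$ only makes coverage harder), but it would be good to say so. Second, your $O(1)$ in the expansion of $\log(\alpha^{-1})+d\log\log(\alpha^{-1})$ is in fact $o(1)$ once $w(n)=o(\log n)$ is in force, matching the paper's computation; either estimate suffices since the slack is swallowed by $w(n)/2$.
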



\begin{proof}
Recall our definition of the Poisson process in Section \ref{sec:poisson} as $\cP_n = \set{X_1,\ldots,X_N}$ where $N\sim\pois{n}$.
For any $\tilde n>1$ (not necessarily integer) define $\cX_{\tilde n} = \set{X_1,\ldots, X_{\floor{\tilde n}}}$, and let $\tilde \Lambda = \omega_d \tilde n r^d$.
\begin{enumerate}
\item Note that $\prob{X_r > x} = \prob{\cM\not\subset \cU(\cX_{\tilde{n}},r)}$, where $x =  \alpha \tilde n -  \log(\alpha^{-1}) - d\log\log(\alpha^{-1})$.
Now, taking $\tilde\Lambda = \log \tilde n + (d-1)\log\log \tilde n +w({\tilde n})$, since $\alpha = \omega_d r^d = \tilde n^{-1}\tilde \Lambda$ we have
\[
x = (d-1)\log\log\tilde n +\log\tilde\Lambda -d\log(\log\tilde n-\log\tilde\Lambda)+w(\tilde n).
\]
Note that
\[
\lim_{\tilde n\to \infty}\param{ \log\tilde\Lambda-\log\log \tilde n} = \lim_{\tilde n \to \infty} \param{\log\log\tilde n - \log(\log\tilde n - \log\tilde\Lambda)} = 0.
\]
Therefore, we have $x= w({\tilde n}) + o(1)$, and using \eqref{eq:coverage_th_1} we have
\begin{equation}\label{eq:no_cover_prob}
	\prob{\cM\not\subset\cU(\cX_{\tilde n},r)} \le C e^{-(w({\tilde n})+o(1))/8}.
\end{equation}

Moving back to the Poisson process, let $q\in (1/2,1)$ and set $\tilde n = n-n^{q}$.
Then
\begin{equation}\label{eq:pois_no_cover}\begin{split}
	\prob{\cM \not\subset \cU(\cP_n,r)} &= \cprob{\cM \not\subset \cU(\cP_n,r)}{\abs{\cP_n} \ge \tilde n}\prob{\abs{\cP_n} \ge \tilde n} \\
	& + \cprob{\cM \not\subset \cU(\cP_n,r)}{\abs{\cP_n} < \tilde n}\prob{\abs{\cP_n} < \tilde n} \\
	&\le \prob{\cM\not\subset\cU(\cX_{\tilde n},r)} + \prob{\abs{\cP_n} < \tilde n}.
\end{split}
\end{equation}
Using Chebyshev's inequality we can show that $\prob{\abs{\cP_n} < \tilde n}\to 0$.
Now, take $\Lambda = \log n + (d-1)\log\log n + w(n)$, and notice that $\tilde\Lambda := \frac{\tilde n}{n}\Lambda = \Lambda - n^{q-1}\Lambda$. Since
\[
	\limninf(\log n - \log\tilde n) = 0, \quad and \quad
	\limninf(\log\log n -\log\log\tilde n) = 0,
\]	
we have that 
\[
	\tilde \Lambda = \log\tilde n + (d-1)\log\log\tilde n + w(n) + o(1).
\]
Combining \eqref{eq:pois_no_cover} and \eqref{eq:no_cover_prob} we then have
\[
	\prob{\cM \not\subset \cU(\cP_n,r)} \le Ce^{-(w(n)+o(1))/8} + o(1).
\]
Taking $w(n)\to\infty$ completes the proof.
\item Notice that $\prob{X_r< x} = \prob{\cM \subset\cU(\cX_{\tilde n},r)}$ where  $x = \alpha \tilde n - \log(\alpha^{-1})-d\log\log(\alpha^{-1})$.
Similarly to the above, taking $\tilde \Lambda = \log \tilde n + (d-1)\log\log\tilde n - w({\tilde n})$ and using \eqref{eq:coverage_th_2} we have that
\[
\prob{\cM\subset\cU(\cX_{\tilde n},r)} \le Ce^{-w({\tilde n})+o(1)},
\]
Next, set $\tilde n = n+n^q$ for some $q\in(1/2,1)$, then
\[\begin{split}
	\prob{\cM \subset \cU(\cP_n,r)} &= \cprob{\cM \subset \cU(\cP_n,r)}{\abs{\cP_n}> \tilde n}\prob{\abs{\cP_n} > \tilde n} \\
	& + \cprob{\cM \subset \cU(\cP_n,r)}{\abs{\cP_n} \le\tilde n}\prob{\abs{\cP_n} \le\tilde n} \\
	&\le  \prob{\abs{\cP_n} > \tilde n}+ \prob{\cM\subset\cU(\cX_{\tilde n},r)}.
\end{split}
\]
Continuing the same way as above yields
\[
\prob{\cM \subset \cU(\cP_n,r)} \le Ce^{-w(n)+o(1)} + o(1),
\]
and taking $w(n)\to \infty$ completes the proof.
\end{enumerate}
\end{proof}


\section{Intersection of Balls}\label{sec:vol_intersection}


In this section we provide a brief explanation to the approximation of the volume $V_1(\Delta)$ that appeared in \eqref{eq:vol_intersection}. 

The intersection of two balls is a region bounded by the union of two spherical caps. In \cite{li_concise_2011}  an explicit formula is shown for such volumes, from which we obtain 
\[
	V_1(\Delta) = 2\omega_{d-1} \int_0^{\cos^{-1}(\Delta/2)} \sin^d(\theta)d\theta.
\]
Thus,
\[
	V_1'(\Delta) = 2\omega_{d-1} \frac{-1}{2\sqrt{1-(\Delta/2)^2}} \sin^d(\cos^{-1}(\Delta/2)).
\]
At $\Delta=0$ the intersection is a whole ball, and therefore $V_1(\Delta)=0$, and from the last formula we have $V_1'(0) = -\omega_{d-1}$. Using taylor expansion we therefore have
\[
	V_1(\Delta) = \omega_d - \omega_{d-1}\Delta + o(\Delta).
\]	


\bibliographystyle{plain}
\bibliography{../../../../Latex/zotero}

\begin{thebibliography}{10}

\bibitem{adler_persistent_2010}
Robert~J. Adler, Omer Bobrowski, Matthew~S. Borman, Eliran Subag, and Shmuel
  Weinberger.
\newblock Persistent homology for random fields and complexes.
\newblock In {\em Borrowing {Strength}: {Theory} {Powering} {Applications}--{A}
  {Festschrift} for {Lawrence} {D}. {Brown}}, pages 124--143. Institute of
  Mathematical Statistics, 2010.

\bibitem{adler_crackle:_2014}
Robert~J. Adler, Omer Bobrowski, and Shmuel Weinberger.
\newblock Crackle: {The} {Homology} of {Noise}.
\newblock {\em Discrete \& Computational Geometry}, 52(4):680--704, December
  2014.

\bibitem{alon_probabilistic_2004}
Noga Alon and Joel~H. Spencer.
\newblock {\em The probabilistic method}.
\newblock John Wiley \& Sons, 2004.

\bibitem{aronshtam_when_2015}
Lior Aronshtam and Nathan Linial.
\newblock When does the top homology of a random simplicial complex vanish?
\newblock {\em Random Structures \& Algorithms}, 46(1):26--35, 2015.

\bibitem{aronshtam_collapsibility_2013}
Lior Aronshtam, Nathan Linial, Tomasz Luczak, and Roy Meshulam.
\newblock Collapsibility and vanishing of top homology in random simplicial
  complexes.
\newblock {\em Discrete \& Computational Geometry}, 49(2):317--334, 2013.

\bibitem{babson_fundamental_2011}
Eric Babson, Christopher Hoffman, and Matthew Kahle.
\newblock The fundamental group of random 2-complexes.
\newblock {\em Journal of the American Mathematical Society}, 24(1):1--28,
  2011.

\bibitem{balakrishnan_minimax_2012}
Sivaraman Balakrishnan, Alessandro Rinaldo, Don Sheehy, Aarti Singh, and
  Larry~A. Wasserman.
\newblock Minimax rates for homology inference.
\newblock In {\em International {Conference} on {Artificial} {Intelligence} and
  {Statistics}}, pages 64--72, 2012.

\bibitem{balakrishnan_tight_2013}
Sivaraman Balakrishnan, Alessandro Rinaldo, Aarti Singh, and Larry Wasserman.
\newblock Tight {Lower} {Bounds} for {Homology} {Inference}.
\newblock {\em arXiv:1307.7666 [cs, math, stat]}, July 2013.

\bibitem{bobrowski_distance_2014}
Omer Bobrowski and Robert~J. Adler.
\newblock Distance functions, critical points, and the topology of random {\v
  c}ech complexes.
\newblock {\em Homology, Homotopy and Applications}, 16(2):311--344, 2014.

\bibitem{bobrowski_topology_2014-1}
Omer Bobrowski and Matthew Kahle.
\newblock Topology of random geometric complexes: a survey.
\newblock {\em arXiv preprint arXiv:1409.4734}, 2014.

\bibitem{bobrowski_topology_2014}
Omer Bobrowski and Sayan Mukherjee.
\newblock The topology of probability distributions on manifolds.
\newblock {\em Probability Theory and Related Fields}, 161(3-4):651--686, 2014.

\bibitem{bollobas_random_1998}
B{\'e}la Bollob{\'a}s.
\newblock {\em Random graphs}.
\newblock Springer, 1998.

\bibitem{borsuk_imbedding_1948}
Karol Borsuk.
\newblock On the imbedding of systems of compacta in simplicial complexes.
\newblock {\em Fundamenta Mathematicae}, 35(1):217--234, 1948.

\bibitem{carlsson_topology_2009}
Gunnar Carlsson.
\newblock Topology and data.
\newblock {\em Bulletin of the American Mathematical Society}, 46(2):255--308,
  2009.

\bibitem{chazal_sampling_2009}
Fr{\'e}d{\'e}ric Chazal, David Cohen-Steiner, and Andr{\'e} Lieutier.
\newblock A sampling theory for compact sets in {Euclidean} space.
\newblock {\em Discrete \& Computational Geometry}, 41(3):461--479, 2009.

\bibitem{cheeger_critical_1991}
Jeff Cheeger.
\newblock Critical points of distance functions and applications to geometry.
\newblock In {\em Geometric topology: recent developments}, pages 1--38.
  Springer, 1991.

\bibitem{cohen_topology_2012}
D.~Cohen, Armindo Costa, Michael Farber, and Thomas Kappeler.
\newblock Topology of random 2-complexes.
\newblock {\em Discrete \& Computational Geometry}, 47(1):117--149, 2012.

\bibitem{costa_asphericity_2015}
A.~E. Costa and Michael Farber.
\newblock The asphericity of random 2-dimensional complexes.
\newblock {\em Random Structures \& Algorithms}, 46(2):261--273, 2015.

\bibitem{erdos_random_1959}
Paul Erd{\H o}s and Alfr{\'e}d R{\'e}nyi.
\newblock On random graphs.
\newblock {\em Publicationes Mathematicae Debrecen}, 6:290--297, 1959.

\bibitem{flatto_random_1977}
Leopold Flatto and Donald~J. Newman.
\newblock Random coverings.
\newblock {\em Acta Mathematica}, 138(1):241--264, 1977.

\bibitem{gershkovich_morse_1997}
Vladimir Gershkovich and Haim Rubinstein.
\newblock Morse theory for {Min}-type functions.
\newblock {\em Asian Journal of Mathematics}, 1:696--715, 1997.

\bibitem{ghrist_barcodes:_2008}
Robert Ghrist.
\newblock Barcodes: the persistent topology of data.
\newblock {\em Bulletin of the American Mathematical Society}, 45(1):61--75,
  2008.

\bibitem{hall_introduction_1988}
Peter Hall.
\newblock {\em Introduction to the theory of coverage processes}.
\newblock John Wiley \& Sons Incorporated, 1988.

\bibitem{hatcher_algebraic_2002}
Allen Hatcher.
\newblock {\em Algebraic topology}.
\newblock Cambridge University Press, 2002.

\bibitem{kahle_random_2011}
Matthew Kahle.
\newblock Random geometric complexes.
\newblock {\em Discrete \& Computational Geometry}, 45(3):553--573, 2011.

\bibitem{kahle_sharp_2014}
Matthew Kahle.
\newblock Sharp vanishing thresholds for cohomology of random flag complexes.
\newblock {\em Annals of Mathematics}, 179(3):1085--1107, May 2014.

\bibitem{kahle_limit_2013}
Matthew Kahle and Elizabeth Meckes.
\newblock Limit the theorems for {Betti} numbers of random simplicial
  complexes.
\newblock {\em Homology, Homotopy and Applications}, 15(1):343--374, 2013.

\bibitem{kahle_inside_2014}
Matthew Kahle and Boris Pittel.
\newblock Inside the critical window for cohomology of random k-complexes.
\newblock {\em Random Structures \& Algorithms}, 2014.

\bibitem{kozlov_threshold_2010}
Dmitry Kozlov.
\newblock The threshold function for vanishing of the top homology group of
  random d-complexes.
\newblock {\em Proceedings of the American Mathematical Society},
  138(12):4517--4527, 2010.

\bibitem{li_concise_2011}
Shengqiao Li.
\newblock Concise formulas for the area and volume of a hyperspherical cap.
\newblock {\em Asian Journal of Mathematics and Statistics}, 4(1):66--70, 2011.

\bibitem{linial_homological_2006}
Nathan Linial and Roy Meshulam.
\newblock Homological connectivity of random 2-complexes.
\newblock {\em Combinatorica}, 26(4):475--487, 2006.

\bibitem{linial_phase_2014}
Nathan Linial and Yuval Peled.
\newblock On the phase transition in random simplicial complexes.
\newblock {\em arXiv preprint arXiv:1410.1281}, 2014.

\bibitem{meshulam_homological_2009}
Roy Meshulam and Nathan Wallach.
\newblock Homological connectivity of random k-dimensional complexes.
\newblock {\em Random Structures \& Algorithms}, 34(3):408--417, 2009.

\bibitem{milnor_morse_1963}
John~Willard Milnor.
\newblock {\em Morse theory}.
\newblock Princeton university press, 1963.

\bibitem{moller_lectures_2012}
Jesper Moller.
\newblock {\em Lectures on random {Voronoi} tessellations}, volume~87.
\newblock Springer Science \& Business Media, 2012.

\bibitem{munkres_elements_1984}
James~R. Munkres.
\newblock {\em Elements of algebraic topology}, volume~2.
\newblock Addison-Wesley Reading, 1984.

\bibitem{niyogi_finding_2008}
Partha Niyogi, Stephen Smale, and Shmuel Weinberger.
\newblock Finding the homology of submanifolds with high confidence from random
  samples.
\newblock {\em Discrete \& Computational Geometry}, 39(1-3):419--441, 2008.

\bibitem{niyogi_topological_2011}
Partha Niyogi, Stephen Smale, and Shmuel Weinberger.
\newblock A topological view of unsupervised learning from noisy data.
\newblock {\em SIAM Journal on Computing}, 40(3):646--663, 2011.

\bibitem{owada_limit_2015}
Takashi Owada and Robert~J. Adler.
\newblock Limit {Theorems} for {Point} {Processes} under {Geometric}
  {Constraints} (and {Topological} {Crackle}).
\newblock {\em arXiv preprint arXiv:1503.08416}, 2015.

\bibitem{penrose_random_2003}
Mathew Penrose.
\newblock {\em Random geometric graphs}, volume~5.
\newblock Oxford University Press Oxford, 2003.

\bibitem{yogeshwaran_random_2014}
D.~Yogeshwaran, Eliran Subag, and Robert~J. Adler.
\newblock Random geometric complexes in the thermodynamic regime.
\newblock {\em arXiv preprint arXiv:1403.1164}, 2014.

\bibitem{zomorodian_topological_2007}
Afra Zomorodian.
\newblock Topological data analysis.
\newblock {\em Advances in Applied and Computational Topology}, 70:1--39, 2007.

\end{thebibliography}

\end{document}